\title{Dp-finite fields V: topological fields of finite weight}
\author{Will Johnson}
\DeclareMathOperator*{\forkindep}{\raise0.2ex\hbox{\ooalign{\hidewidth$\vert$\hidewidth\cr\raise-0.9ex\hbox{$\smile$}}}}
\newcommand{\crk}{\operatorname{c-rk}}
\newcommand{\wt}{\operatorname{wt}}
\newcommand{\Frac}{\operatorname{Frac}}
\newcommand{\val}{\operatorname{val}}
\newcommand{\dpr}{\operatorname{dp-rk}}
\newcommand{\Sub}{\operatorname{Sub}}
\newcommand{\Dir}{\operatorname{Dir}}
\newtheorem{theorem}{Theorem}[section] % numbered like the section
\newtheorem{lemma}[theorem]{Lemma}
\newtheorem{corollary}[theorem]{Corollary}
\newtheorem{fact}[theorem]{Fact}
\newtheorem{assumption}[theorem]{Assumption}
\newtheorem{conjecture}[theorem]{Conjecture}
\newtheorem{proposition}[theorem]{Proposition}
\newtheorem*{theorem-star}{Theorem}
\newtheorem*{conjecture-star}{Conjecture}
\theoremstyle{definition}
\newtheorem{definition}[theorem]{Definition}
\newtheorem{example}[theorem]{Example}
\theoremstyle{remark}
\newtheorem{remark}[theorem]{Remark}
\newtheorem{claim}[theorem]{Claim}
\newtheorem*{acknowledgment}{Acknowledgments}
\newcommand{\Qq}{\mathbb{Q}}
\newcommand{\Zz}{\mathbb{Z}}
\newcommand{\Kk}{\mathbb{K}}
\newcommand{\Nn}{\mathbb{N}}
\newcommand{\Oo}{\mathcal{O}}
\newcommand{\mm}{\mathfrak{m}}
\newcommand{\pp}{\mathfrak{p}}
\newenvironment{claimproof}[1][\proofname]
               {
                 \proof[#1]
                 
               }
               {
                 \endproof
               }
\begin{document}
\maketitle

\begin{abstract}
  We prove that unstable dp-finite fields admit definable
  V-topologies.  As a consequence, the henselianity conjecture for
  dp-finite fields implies the Shelah conjecture for dp-finite fields.
  This gives a conceptually simpler proof of the classification of
  dp-finite fields of positive characteristic.

  For $n \ge 1$, we define a local class of \emph{$W_n$-topological
    fields}, generalizing V-topological fields.  A $W_1$-topology is
  the same thing as a V-topology, and a $W_n$-topology is some
  higher-rank analogue.  If $K$ is an unstable dp-finite field, then
  the canonical topology of \cite{prdf} is a definable $W_n$-topology
  for $n = \dpr(K)$.  Every $W_n$-topology has between 1 and
  $n$ coarsenings that are V-topologies.  If the given $W_n$-topology
  is definable in some structure, then so are the V-topological
  coarsenings.
\end{abstract}

\section{Introduction}

There are two main conjectures on NIP fields.
\begin{conjecture}[Henselianity]
  Every NIP valued field $(K,v)$ is henselian.
\end{conjecture}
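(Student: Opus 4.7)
The statement in question is the well-known Henselianity Conjecture for NIP valued fields, which is a deep open problem not actually settled in this paper; the paper instead works towards it by reducing it (in the dp-finite case) to the Shelah Conjecture and by analyzing the canonical topology. Any honest proof plan must therefore be partial and speculative. The general strategy that has succeeded in known cases (dp-minimal, and dp-finite of positive characteristic) is to extract from the NIP hypothesis enough topological structure on $K$ to make a failure of Hensel's lemma combinatorially impossible.

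Concretely, given an NIP valued field $(K,v)$ that is not henselian, I would first pass to a sufficiently saturated elementary extension and use the standard machinery of externally definable valuations in NIP fields to reduce to the case where $\Oo_v$ is at least type-definable (ideally $\emptyset$-definable in a Shelah expansion). The next, and essential, step would be to produce a \emph{definable} V-topology on $K$ that is comparable with the $v$-adic topology; then Prestel--Ziegler-style rigidity for V-topologies forces the two to coincide, placing $v$ in the realm of topologies we understand. In the dp-finite setting this is exactly what the canonical topology of \cite{prdf} provides, and the present paper's $W_n$-topology framework refines this analysis further.

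Granted such a definable topology, the closing step would be to use failure of Hensel's lemma to witness IP. A non-henselian valuation supplies, for some $f(x) \in \Oo_v[x]$ with an approximate simple root $a$, an uncountable family of $v$-inequivalent approximations whose Krasner neighborhoods fail to nest correctly; combined with a V-topological density argument and the mean-value-type estimates underlying Hensel's lemma, one would hope to extract a uniformly definable family of sets with arbitrarily large VC dimension, contradicting NIP.

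The decisive obstacle is clearly the middle step: in the general NIP setting, no mechanism is known for producing a useful definable field topology without a dp-rank bound. Every existing partial result on the conjecture routes through some such construction, and the contribution of the present paper is precisely to enlarge the dp-finite toolkit in this direction. A full proof of the conjecture therefore seems to require a genuinely new source of definable topologies outside the dp-finite regime; until then, the most realistic "proof plan" is the conditional one of this paper, which reduces the dp-finite case of Henselianity to the Shelah Conjecture.
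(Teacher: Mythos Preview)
You are right that the statement is an open conjecture and that the paper contains no proof of it; there is therefore nothing to compare your attempt against, and your decision to give only a speculative outline is the honest one.

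However, your description of what the paper actually does is backwards, and this matters for understanding the logical landscape. You write that the paper ``reduces [Henselianity] (in the dp-finite case) to the Shelah Conjecture'' and again at the end that the paper ``reduces the dp-finite case of Henselianity to the Shelah Conjecture.'' In fact the implication Shelah $\Rightarrow$ Henselianity was already known (see \cite{hhj-v-top}); the new content of the present paper is the \emph{converse} in the dp-finite case, Theorem~\ref{thm1}: the Henselianity conjecture for dp-finite fields implies the Shelah conjecture for dp-finite fields. So the paper reduces Shelah to Henselianity, not the other way around. This is why the positive-characteristic case of Shelah now follows cleanly: Henselianity was already established there by a short argument in \cite{prdf}, and Theorem~\ref{thm1} upgrades it to Shelah.

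Your speculative middle step---producing a definable V-topology comparable with the given $v$-adic topology---is indeed close in spirit to Theorem~\ref{thm2} and Theorem~\ref{insight:thm} of the paper, but note that those results go the other way: they supply a definable V-topology starting from the dp-finite structure on $K$, and then one \emph{invokes} Henselianity (as a hypothesis) to conclude Shelah. They do not, and are not intended to, prove Henselianity itself.
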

\begin{conjecture}[Shelah]
  Let $K$ be an NIP field.  Then one of the following holds:
  \begin{itemize}
  \item $K$ is finite
  \item $K$ is algebraically closed
  \item $K$ is real closed
  \item $K$ admits a non-trivial henselian valuation.
  \end{itemize}
\end{conjecture}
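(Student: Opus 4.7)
The Shelah conjecture is famously open, so any proposal must be programmatic rather than concrete; the plan I would follow is the one pursued in this paper series. First, I would split into cases by stability. In the stable NIP case, one invokes (as a separate, still-open sub-conjecture) that every infinite stable field is separably closed; under NIP this forces algebraic closure, placing $K$ in the second bullet. So the substantive target is the unstable case, where one expects either real closure or a non-trivial henselian valuation.

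For unstable NIP $K$, my plan is to produce a non-trivial definable V-topology on $K$ and then leverage the Prestel--Ziegler classification: a definable V-topology is induced either by an archimedean absolute value---which under NIP forces $K$ to be real closed (or, degenerately, algebraically closed)---or by a valuation $v$, which under the henselianity conjecture must be henselian, putting $K$ in the fourth bullet. The core problem therefore reduces to the implication \emph{unstable NIP $\Rightarrow$ definable V-topology on $K$}, modulo the henselianity conjecture.

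For dp-finite $K$, the reference \cite{prdf} already produces a canonical definable topology, but it is not known a priori to be a V-topology. I would attack this by abstracting the topological features that the canonical topology does have, namely the local axioms making it a $W_n$-topology with $n = \dpr(K)$, and then proving a general structural theorem: every $W_n$-topology admits between $1$ and $n$ V-topological coarsenings, each definable in any structure in which the $W_n$-topology is definable. Applied to the canonical topology, this yields the desired definable V-topology on $K$ and so reduces the dp-finite case of Shelah to the henselianity conjecture, exactly as the abstract promises.

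The main obstacle is entirely the step \emph{unstable field $\Rightarrow$ definable V-topology}. Inside the dp-finite world, this splits into two hard pieces: constructing a canonical topology at all (done in \cite{prdf}) and, more pertinently here, extracting a V-topological coarsening from a $W_n$-topology, which is the technical heart of this paper. Outside dp-finiteness one has no canonical topology to start from, and it is this gap---not the subsequent passage from V-topology to henselian valuation---that is the reason the full NIP Shelah conjecture remains open.
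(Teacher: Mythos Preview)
The statement is a \emph{conjecture}; the paper does not prove it, and you correctly flag this at the outset. Your programmatic outline is essentially the paper's strategy for the dp-finite case: show that unstable dp-finite fields carry a definable $W_n$-topology (the canonical topology), extract a definable V-topological coarsening, and then reduce Shelah to henselianity.

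Two points where your sketch diverges from the paper's actual route. First, in the dp-finite setting the stable case is \emph{not} an open sub-conjecture: the paper cites Palac\'in to the effect that a stable dp-finite field is finite or algebraically closed, so that case is already handled. Second, the paper does not pass through the Prestel--Ziegler archimedean/non-archimedean dichotomy. Instead, on a sufficiently saturated model the definable V-topology is shown (via \cite{hhj-v-top}, Proposition~3.5) to come from an \emph{externally definable valuation ring}; the henselianity conjecture applies to this because the Shelah expansion of a dp-finite structure is dp-finite. This bypasses any separate treatment of the archimedean case: on a saturated model the V-topology is always valuation-induced, and the real-closed option is absorbed into ``admits a non-trivial henselian valuation'' rather than singled out. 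Your route through absolute values would work too, but it is not the paper's.
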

The Shelah conjecture is known to imply the henselianity conjecture
\cite{hhj-v-top}, as well as a full classification of NIP fields
\cite{NIP-char}.  For the Shelah conjecture, we may assume that $K$ is
sufficiently saturated.

These implications continue to hold in the restricted setting of
dp-finite fields (fields of finite dp-rank).  Our main result is a
partial converse to \cite{hhj-v-top}:
\begin{theorem}\label{thm1}
  The henselianity conjecture for dp-finite fields implies the Shelah
  conjecture for dp-finite fields.
\end{theorem}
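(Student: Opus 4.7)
The plan is to split into cases according to stability. Let $K$ be a sufficiently saturated infinite dp-finite field; I want to show $K$ is algebraically closed, real closed, or admits a non-trivial henselian valuation, assuming the henselianity conjecture for dp-finite fields. If $K$ is stable, I would cite the known results on stable dp-finite fields (e.g.\ the fact that infinite stable dp-finite fields are algebraically closed), and the trichotomy is immediate. So the interesting case is when $K$ is unstable.

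In the unstable case I would invoke the main technical theorem flagged in the abstract: the canonical topology on $K$ from \cite{prdf} is a definable $W_n$-topology for $n = \dpr(K)$, and every $W_n$-topology has a coarsening which is a definable V-topology. This produces a definable V-topology $\tau$ on $K$.

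From $\tau$ I would then try to extract a non-trivial henselian valuation. The classical structure theory of V-topological fields asserts that $\tau$ is induced either by a non-trivial valuation or by an archimedean absolute value. In the valued case, the valuation ring is recoverable from $\tau$ by a standard $\tau$-definable formula (it is the ring of topologically bounded elements of a suitable localization), so we obtain a definable non-trivial valuation $v$ on $K$. Applying the henselianity conjecture for dp-finite fields to $(K,v)$ gives henselianity, and we are done. In the archimedean subcase, the completion of $K$ is $\Rr$ or $\Cc$; here I would argue from saturation of $K$ together with definability of $\tau$ (i.e.\ that the ``bounded'' and ``infinitesimal'' neighborhoods are internal) that the field operations force $K$ itself to be real closed or algebraically closed.

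The main obstacle is entirely in the second step: proving that the canonical $W_n$-topology on an unstable dp-finite field carries a definable V-topological coarsening. This is the central technical content of the paper, and everything else in the argument above is essentially formal. The stable case is a quotation of prior dp-finite field theory; passing from a definable V-topology to a definable valuation is classical topological field theory; and dispatching the archimedean subcase is a routine saturation-plus-density argument on top of the classification of complete V-topological fields.
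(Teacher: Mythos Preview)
Your overall architecture matches the paper exactly: split into stable (quote known results) versus unstable, and in the unstable case produce a definable V-topology via the $W_n$ machinery.  You are also right that step~2 carries all the real work.

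The gap is in step~3.  A V-topology on $K$ does \emph{not} determine a valuation ring up to definability: many incomparable valuations can induce the same topology, and your ``ring of topologically bounded elements'' is only $\vee$-definable (a small-indexed union of definable sets), not definable.  So you do not get a definable valuation $v$ on $K$, and the henselianity conjecture as stated does not apply directly.  The paper's fix is to cite Proposition~3.5 of \cite{hhj-v-top}: on a sufficiently saturated field, a definable V-topology is induced by an \emph{externally definable} valuation ring.  One then observes that the Shelah expansion of a dp-finite structure is still dp-finite, so the henselianity conjecture for dp-finite fields applies to $(K,v)$ in the Shelah expansion.  This is the step you are missing.

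Relatedly, your archimedean subcase is a red herring.  In a sufficiently saturated field with a definable locally bounded topology, any finite (hence any countable, by saturation) subset is bounded; in particular the prime ring is bounded, so the topology is never archimedean.  Equivalently, Proposition~3.5 of \cite{hhj-v-top} already hands you a valuation ring, full stop.  The detour through completions and ``$K$ must be real closed or algebraically closed'' is unnecessary and, as written, not justified.
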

For the special case of \emph{positive characteristic} dp-finite fields,
\begin{itemize}
\item The henselianity conjecture was proven by a very simple argument
  in \cite{prdf}, \S2.
\item The Shelah conjecture was proven by a very complicated argument
  in \cite{prdf}, \S10-11.
\end{itemize}
Using Theorem~\ref{thm1}, we get a conceptually simpler proof of the
second point.

If $K$ is a \emph{stable} dp-finite field, then $K$ is algebraically
closed or finite, by Proposition~7.2 in \cite{Palacin}.  The key to
proving Theorem~\ref{thm1} is
\begin{theorem}\label{thm2}
  Let $K$ be an \emph{unstable} dp-finite field.  Then $K$ admits a
  definable V-topology.
\end{theorem}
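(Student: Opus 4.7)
The plan is to realize the desired V-topology as a definable coarsening of the canonical topology constructed in \cite{prdf}. The strategy has three steps: first, introduce an intermediate class of \emph{$W_n$-topological fields} that generalizes V-topological fields; second, show that the canonical topology of an unstable dp-finite $K$ is a definable $W_n$-topology with $n = \dpr(K)$; and third, prove that every $W_n$-topology admits a definable coarsening that is a V-topology. These three steps combine directly to give Theorem~\ref{thm2}.

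For the first two steps, I would axiomatize $W_n$-topologies via conditions on a neighborhood basis of zero that parallel the standard axioms for V-topologies but permit a controlled amount of higher rank, so that $W_1$-topologies are exactly V-topologies and a $W_n$-topology locally behaves like a join or compositum of at most $n$ incomparable V-topology-like pieces. The numerical parameter $n$ should be calibrated so as to be bounded above by the dp-rank of any ambient structure in which the topology is definable. Verifying that the canonical topology of \cite{prdf} is a $W_n$-topology with $n = \dpr(K)$ should then reduce to unwinding its construction from heavy formulas and translating the dp-rank calculations of \cite{prdf} into the new axiomatic framework; this is essentially bookkeeping once the axioms are chosen correctly.

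The main obstacle is the third step: producing V-topological coarsenings. My approach would be induction on $n$. Given the axioms, one should extract a distinguished nontrivial prime-like structure inside the ring of topologically bounded elements, say a closed radical-type ideal, whose associated quotient topology has strictly smaller weight and which lifts back to a proper coarsening of the original topology. The base case $n = 1$ is tautological, and the inductive step accounts for the ``between $1$ and $n$'' count of minimal V-topological coarsenings according to how many such primes exist. Definability is preserved automatically, since each coarsening is obtained by a first-order construction applied to the starting neighborhood basis. The genuine technical work is therefore to establish that $W_n$-topologies decompose along such primes in an inductive manner, and that is where the hardest arguments, blending topological algebra with valuation theory, must be concentrated.
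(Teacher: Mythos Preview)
Your three-step outline matches the paper's architecture exactly: define $W_n$-topologies, show the canonical topology is a definable $W_n$-topology, then produce a definable V-topological coarsening. The divergence is in how step~3 is executed.

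The paper does not induct on $n$. Instead, after passing to a saturated model where the topology is induced by a $\vee$-definable $W_n$-ring $R$, the key move is to take the \emph{integral closure} $\tilde{R}$ of $R$ in $K$. A short argument using Dilworth's theorem (Proposition~\ref{iclose:prop}) shows that $\tilde{R}$ is an intersection of at most $n$ valuation rings; localizing $\tilde{R}$ at any maximal ideal then yields a $\vee$-definable valuation ring in one step, and definability is pushed back down to the original field by a compactness-and-transfer argument (Theorem~\ref{defcourse}). Your proposed induction via primes of the bounded ring and ``quotient topologies of smaller weight'' has a concrete obstruction: a $W_n$-ring can be \emph{local} without being a valuation ring (the diffeovaluation $W_2$-rings of \cite{prdf4} are examples), in which case localizing at its unique maximal ideal returns the same ring and no strict coarsening is produced. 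The integral-closure step is precisely what repairs this, and it is the main algebraic idea you are missing.

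Step~2 is also more than bookkeeping: the paper abstracts the relevant properties of the lattice of type-definable $K$-linear subspaces into an axiomatic notion of \emph{golden lattice} (\S\ref{sec:gold}), and showing that such a lattice induces a $W_r$-topology (Theorem~\ref{gold:thm}) requires a nontrivial ``guarding'' argument to verify continuity of multiplication and local boundedness.
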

If $K$ is sufficiently saturated, then the definable V-topology in
Theorem~\ref{thm2} yields an externally definable valuation ring, by
Proposition~3.5 in \cite{hhj-v-top}.  The henselianity conjecture
applies to externally definable valuation rings, because of general
facts about Shelah expansions of NIP structures.  Therefore, the
henselianity conjecture implies the Shelah conjecture (for dp-finite
fields).

In order to prove Theorem~\ref{thm2}, we use the \emph{canonical
  topology} on $K$, defined in \cite{prdf}, Remark~6.18.
\begin{fact}
  Let $K$ be an unstable dp-finite field.
  \begin{itemize}
  \item The canonical topology is a field topology, i.e., the field
    operations are continuous.
  \item As $D$ ranges over definable subsets of $K$ with $\dpr(D) =
    \dpr(K)$, the sets $D - D$ range over a neighborhood basis of 0 in
    the canonical topology.
  \end{itemize}
  See (\cite{prdf2}, Corollaries~5.10 and 5.15) for a proof.
\end{fact}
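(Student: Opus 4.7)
The plan is to unwind the definition of the canonical topology from \cite{prdf}, Remark~6.18, and verify the two bullets in turn. In a sufficiently saturated model, call $\varepsilon \in K$ an \emph{infinitesimal} over a small parameter set $A$ if $\varepsilon \in D$ for every $A$-definable $D \ni 0$ with $\dpr(D) = \dpr(K)$. The canonical topology is set up so that $U$ is a neighborhood of $0$ iff $U$ contains all infinitesimals over the parameters defining $U$, and neighborhoods of other points are obtained by translation; this reduces everything to understanding neighborhoods of $0$.

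For the second bullet, note that $D - D$ always contains $0$. If $D$ is definable with $\dpr(D) = \dpr(K)$, pick any $d_0 \in D$ and observe that $D - d_0$ contains $0$ and still has full dp-rank, so every infinitesimal lies in $D - d_0 \subseteq D - D$, exhibiting $D - D$ as a neighborhood of $0$. Conversely, given a neighborhood $U$ of $0$, saturation yields a definable $V \subseteq U$ with $0 \in V$ and $\dpr(V) = \dpr(K)$. A Steinhaus-style argument, using sub-additivity of dp-rank together with generic-type translates, then produces a definable $D$ of full dp-rank with $D - D \subseteq V \subseteq U$, establishing the basis claim.

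For the first bullet, translation invariance reduces continuity to three checks: addition at $(0,0)$, multiplication at $(0,0)$, and inversion at $1$. Addition is easy: given a basis element $D - D$, sub-additivity of dp-rank provides a definable $E$ of full rank with $(E - E) + (E - E) \subseteq D - D$. Inversion is immediate from the fact that $x \mapsto x^{-1}$ is a definable bijection on $K^\times$ preserving dp-rank, and hence sends the basis at $1$ into itself. The main obstacle is multiplication at $(0,0)$: for every basis element $D - D$ one needs basis elements $E - E$ and $F - F$ with $(E - E)(F - F) \subseteq D - D$. My strategy would be to first show that the infinitesimals form an ideal in the ring of ``bounded'' elements, so in particular that products of infinitesimals are infinitesimal. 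One extracts this from a rank calculation: for each $D$ of full rank, the stabilizer $\{a \in K : a \cdot (\text{infinitesimals}) \subseteq D - D\}$ should itself be a neighborhood of $0$, using that multiplication by a nonzero scalar is a dp-rank preserving definable bijection. A compactness argument then yields the desired definable $E, F$, which is the content of (\cite{prdf2}, Corollary~5.15).
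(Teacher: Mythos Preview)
The paper does not prove this Fact at all: it is stated in the introduction with a bare citation to \cite{prdf2}, Corollaries~5.10 and 5.15, and no argument is given. Later, in \S\ref{sec:gold}--\S6, the paper does recover that the canonical topology is a $W_n$-topology (hence a field topology) by an entirely different mechanism: one shows that the lattice of type-definable $k_0$-linear subspaces of $\Kk$ is a \emph{golden lattice}, and then Theorem~\ref{gold:thm} produces the field topology abstractly. That route never touches dp-rank calculations of the sort you sketch; continuity of multiplication, for instance, comes from the ``guarding'' argument in the proof of (\ref{mult-hard}).

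Your outline follows a plausible shape, but several steps are not actually justified. The clearest gap is the addition argument: you assert that ``sub-additivity of dp-rank provides a definable $E$ of full rank with $(E-E)+(E-E)\subseteq D-D$,'' but sub-additivity says nothing of the sort---it bounds the rank of a product, not the additive behaviour of difference sets. In \cite{prdf2} this step is handled by working with the type-definable groups $J_M$ of $M$-infinitesimals, which are genuine subgroups, so $J_M + J_M = J_M$ is automatic; the $D-D$ description is then derived from that, not the other way around. Similarly, your inversion argument is incomplete: the map $x\mapsto x^{-1}$ preserves dp-rank, but that only tells you $(1+(D-D))^{-1}$ has full rank, not that it contains a set of the form $1+(D'-D')$. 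Finally, the ``Steinhaus-style argument'' and the stabilizer argument for multiplication are gestures toward the right ideas but are not proofs; the actual arguments in \cite{prdf2} require the heavy-ideal machinery and the analysis of $I_M$ and $J_M$ developed there.
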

We define a notion of a \emph{$W_n$-topology} on
a field.  These generalize V-topologies; in fact a $W_1$-topology is
the same thing as a V-topology.  Our main results on $W_n$-topologies
are the following:
\begin{theorem}~
  \begin{enumerate}
  \item If $\tau$ is a $W_n$-topology on a field $K$, then there is at
    least one V-topology $\tau'$ coarser than $\tau$.
  \item If $\tau$ is a $W_n$-topology on a field $K$, then the number
    of V-topological coarsenings is at most $n$.
  \item If $\tau$ is a \emph{definable} $W_n$-topology on some field
    $(K,+,\cdot,\ldots)$ (possibly with extra structure), then every
    V-topological coarsening is definable.
  \item Let $(K,+,\cdot,\ldots)$ be a field of dp-rank $n$ and let
    $\tau$ be the canonical topology.  Then $\tau$ is a definable
    $W_n$-topology.
  \end{enumerate}
\end{theorem}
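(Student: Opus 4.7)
The plan is to handle item (4) first, since it anchors the abstract theory in the dp-rank structure of $K$, and then to address items (1)--(3) as structural results on $W_n$-topologies. For (4), definability of $\tau$ is essentially the cited Fact from \cite{prdf2}: the sets $D - D$ for $D$ definable with $\dpr(D) = n$ form a uniformly definable neighborhood basis of $0$. What remains is to verify the $W_n$-axioms for $\tau$, which I expect to be local axioms verifiable from sub-additivity of dp-rank under the field operations together with the fact, already established, that $\tau$ is a field topology. The parameter $n$ enters precisely because the basis is carved out of definable sets of maximal dp-rank $n$.

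For (1), I would proceed by induction on $n$, with the base case $n = 1$ tautological since a $W_1$-topology \emph{is} a V-topology. In the inductive step, the strategy is either to exhibit a proper $W_m$-topological coarsening of $\tau$ for some $m < n$ and appeal to induction, or to work directly: pass to a sufficiently saturated extension, identify a maximal subring $\Oo \subseteq K$ that is $\tau$-bounded but whose quotient field relative to its infinitesimals carries no further structure forced by $\tau$, and verify that $\Oo$ is a valuation ring whose associated V-topology is coarser than $\tau$. For (3), the construction of a V-topological coarsening in (1) should be carried out uniformly in the neighborhood basis of $\tau$; if $\tau$ is definable, then so are the $\tau$-boundedness and $\tau$-infinitesimal predicates, and hence so is the valuation ring carved out of them.

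The main obstacle will be (2): bounding the number of V-topological coarsenings by $n$. Here my approach would be to show that distinct V-topological coarsenings of a common $W_n$-topology must be pairwise \emph{independent} in the sense of the approximation theorem, and then to argue that this independence forces a dimension count bounded by the weight $n$. Intuitively, each V-topological coarsening should pick out one of at most $n$ ``directions'' encoded by the $W_n$-structure, with distinct coarsenings corresponding to distinct directions; making this precise will require a careful decomposition of $W_n$-topologies into V-topological components, and I expect this to be the core technical work.
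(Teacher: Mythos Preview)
Your plan for (4) understates the difficulty. Definability is not immediate from the cited Fact: the family $\{D-D : \dpr(D)=n\}$ is a neighborhood basis, but it is not obviously \emph{uniformly} definable, and in any case the paper obtains definability only \emph{after} establishing the $W_n$-property (via Proposition~\ref{upgrader:prop}, which needs $W_n$ as input). More seriously, ``sub-additivity of dp-rank under the field operations'' does not by itself yield the $W_n$-axiom. The paper's argument passes through the machinery of \S\ref{sec:gold}: one takes a magic subfield $k_0 \preceq \Kk$, forms the lattice $\Lambda$ of type-definable $k_0$-linear subspaces of $\Kk$, and shows $\Lambda$ is a \emph{golden lattice} of cube rank $\le n$. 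Theorem~\ref{gold:thm} then does real work to prove $\Lambda^+$ is a basis for a $W_n$-topology; in particular, continuity of multiplication near $(0,0)$ and the existence of a bounded $W_n$-set neighborhood both require the ``guarding'' lemmas (Lemmas~\ref{petal:lem}--\ref{guard:lem}). None of this is visible in your sketch.

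For (1)--(3), the central device you are missing is the passage from $W_n$-\emph{topologies} to $W_n$-\emph{rings}. After passing to an $\omega$-complete (or saturated) model, Proposition~\ref{gap:prop} lets you replace a bounded $W_n$-set neighborhood by a $W_n$-ring $R$ inducing $\tau$. The V-topological coarsenings then correspond to valuation rings $\Oo \supseteq R$, and the structure theory of $W_n$-rings takes over: the integral closure $\tilde{R}$ is an intersection of at most $n$ valuation rings (Proposition~\ref{iclose:prop}, via a Dilworth argument on the poset of valuation overrings), and localizing $\tilde{R}$ at a maximal ideal produces the desired V-coarsening for (1). Your ``maximal $\tau$-bounded subring'' is not the right object and there is no evident reason it would be a valuation ring. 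For (2), you do not need independence or an approximation theorem: distinct V-coarsenings give pairwise incomparable valuation rings $\Oo_1,\ldots,\Oo_m \supseteq R$, and Proposition~\ref{multi:prop} forces $m \le \wt(R) \le n$ directly. For (3), definability comes from the fact that the integral closure and its localizations are $\vee$-definable over the original bounded neighborhood (Propositions~\ref{ic:prop}, \ref{loc:prop}), combined with Proposition~\ref{vee-to-def}; your ``uniform in the neighborhood basis'' gestures at this but does not identify the mechanism.
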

In particular, the canonical topology is definable.  For rank 1, this
was proved in \cite{arxiv-myself}, and for rank 2 characteristic 0,
this was proved in \cite{prdf4}.

We also define a class of \emph{$W_n$-rings}, which generalize
valuation rings in the same way that $W_n$-topologies generalize
V-topologies.  In particular, a $W_1$-ring on $K$ is the same thing as
a valuation ring on $K$.  Any $W_n$-ring on $K$ induces a
$W_n$-topology.  Up to Prestel-Ziegler local equivalence, all
$W_n$-topologies arise from $W_n$-rings.

We avoid using the inflator machinery of \cite{prdf3} for the above
results.  Nevertheless, there seems to be a close connection between
inflators and $W_n$-rings, which we discuss in \S\ref{sec:inflators}.
In particular, the analysis of 2-inflators in \cite{prdf4} yields a
classification of $W_2$-topologies on fields of characteristic 0:
\begin{theorem}
  Let $K$ be a field of characteristic 0, and $\tau$ be a field
  topology.  Then $\tau$ is a $W_2$-topology if and only if $\tau$ is
  one of the following:
  \begin{itemize}
  \item a V-topology
  \item a ``DV-topology'' in the sense of \cite{prdf4}, Definition~8.18.
  \item a topology generated by two independent V-topologies.
  \end{itemize}
\end{theorem}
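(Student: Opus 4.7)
The plan has two directions. For the \emph{if} direction, I would verify that each of the three listed topologies satisfies the $W_2$-topology axioms. Any V-topology is a $W_1$-topology, and $W_1$-topologies should be $W_2$-topologies by the nesting of the classes built into the definition. For a topology generated by two independent V-topologies $\tau_1, \tau_2$, I would exhibit a witnessing $W_2$-ring as the intersection of two valuation rings inducing $\tau_1$ and $\tau_2$, and verify the $W_2$-axioms using independence. For DV-topologies, I would check the $W_2$-axioms directly from the structural description in \cite{prdf4}, Definition~8.18, or equivalently exhibit the associated $W_2$-ring and invoke the general fact that $W_n$-rings induce $W_n$-topologies.

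The \emph{only if} direction is the substantive half. My plan is to use the correspondence developed in \S\ref{sec:inflators} (together with the paper's assertion that every $W_n$-topology arises, up to Prestel--Ziegler local equivalence, from a $W_n$-ring) to reduce the classification of $W_2$-topologies to the classification of $W_2$-rings, and then, by the bridge between $W_2$-rings and 2-inflators, to the classification of 2-inflators in characteristic 0 already carried out in \cite{prdf4}. Reading off the three classes of 2-inflators would then recover the three classes of topologies in the statement.

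To make the matching concrete, I would case split on the number of V-topological coarsenings of $\tau$, which by the main $W_n$-topology theorem is either $1$ or $2$. If there are two V-topological coarsenings $\tau_1, \tau_2$, these must be independent (else a common refinement contradicts the uniqueness aspect of the $W_n$-classification), and $\tau$ equals their topological supremum, placing us in the third case. If there is a unique V-topological coarsening $\tau'$ and $\tau = \tau'$, we are in the V-topology case; if $\tau$ strictly refines $\tau'$, then the 2-inflator classification in \cite{prdf4} forces the corresponding $W_2$-ring to be of ``DV-type,'' and $\tau$ is a DV-topology.

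The main obstacle will be making the bridge between $W_2$-topologies and 2-inflators in characteristic 0 fully rigorous, since it strings together three formalisms (topologies, rings, inflators) and relies on passing through Prestel--Ziegler local equivalence. Checking that local equivalence preserves both the number of V-topological coarsenings and membership in each of the three target classes is the delicate point; once this is in place, the remainder is essentially bookkeeping against the explicit classification of 2-inflators in \cite{prdf4}.
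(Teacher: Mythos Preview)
Your core strategy---reduce to a $W_2$-ring via Prestel--Ziegler local equivalence, then invoke the 2-inflator classification from \cite{prdf4}---is exactly the paper's approach. The paper packages the ring-level step as Theorem~\ref{w2ring:thm}, which gives a clean dichotomy: either $R$ is co-embeddable with a diffeovaluation ring (so $\tau$ is DV), or some multivaluation ring embeds into $R$ (so $\tau$ is a coarsening of a $V^n$-topology, hence a $V^1$- or $V^2$-topology by Remark~\ref{kowalsky} and the weight bound).

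Where you diverge is in organizing the output by the \emph{number of V-topological coarsenings}. This works, but it is less direct and introduces the very obstacle you flag. Two points:
\begin{itemize}
\item Your claim that two V-coarsenings $\tau_1,\tau_2$ force $\tau=\tau_1+\tau_2$ is correct but not free: it needs Proposition~\ref{bump:prop} (a strict coarsening of a $W_2$-topology is $W_1$), since $\tau_1+\tau_2$ has weight exactly $2$ by Theorem~\ref{vsum:thm}. You do not mention this ingredient.
\item Your worry about whether local equivalence preserves the V-coarsening count is real for your organization, but the paper sidesteps it entirely: each of the three target classes is itself a local class (V- and $V^2$-topologies by Theorem~\ref{vsum:thm}, DV-topologies by fiat in \cite{prdf4}, Definition~8.18), so one passes to the $\omega$-complete model, runs the ring dichotomy there, and concludes directly.
\end{itemize}
In short, your plan is sound and ultimately equivalent, but the paper's route through the ring-level dichotomy avoids the bookkeeping you anticipate.
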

It may be possible to similarly classify $W_n$-topologies using
$n$-inflators.  In \S\ref{conjs:sec}, we discuss some conjectures
about $W_n$-rings and $W_n$-topologies.  One of these conjectures
would imply the Shelah conjecture (for dp-finite fields).

\subsection{Review of topological fields}
We will make heavy use of Prestel and Ziegler's machinery of
\emph{local sentences} and \emph{local equivalence} \cite{PZ}.  A
``ring topology'' on a field $K$ will mean a Hausdorff non-discrete
topology on $K$ such that the ring operations are continuous.  All
topologies we consider will be ring topologies on fields.  A ring
topology is determined by the set of neighborhoods of 0.  Following
\cite{PZ}, we identify a (ring) topology $\tau$ with its set of
neighborhoods of 0 (rather than its set of open sets).

One can consider a topological field $(K,\tau)$ as a two-sorted
structure with sorts $K$ and $\tau$.  A \emph{local sentence} is a
first-order sentence in this language, subject to the following
constraints on quantification over $\tau$:
\begin{itemize}
\item If there is universal quantification $\forall U \in \tau :
  \phi(U)$, then $U$ must occur positively in $\phi$.
\item If there is existential quantification $\exists U \in \tau :
  \phi(U)$, then $U$ must occur negatively in $\phi$.
\end{itemize}
These constraints ensure that one can replace the quantifiers
\[ \forall U \in \tau, \exists U \in \tau\]
with quantification over a neighborhood base.

For example,
\[ \forall U \in \tau ~ \exists V \in \tau : (1 + V)^{-1} \subseteq 1 + U\]
is a local sentence \footnote{More accurately, the local sentence is the following:
  \begin{equation*}
    \forall U \in \tau ~ \exists V \in \tau ~ \forall x \in V ~ \exists y \in U : (1 + x)(1 + y) = 1.
  \end{equation*}} expressing that the (ring) topology is a field topology.

Two topological fields are \emph{locally equivalent} if they satisfy
the same local sentences.  A topological field $(K,\tau)$ is
\emph{$\omega$-complete} if $\tau$ is closed under countable
intersections, i.e., an intersection of countably many neighborhoods
of 0 is a neighborhood of 0.  Every topological field is locally
equivalent to an $\omega$-complete topological field (\cite{PZ},
Theorem~1.1).

A subset $S \subseteq K$ is \emph{bounded} if the following
equivalent conditions hold:
\begin{itemize}
\item For every neighborhood $U \ni 0$, there is nonzero $c \in K$
  such that $c S \subseteq U$.
\item For every neighborhood $U \ni 0$, there is a neighborhood $V \ni
  0$ such that $S \cdot V \subseteq U$.
\end{itemize}
The equivalence is Lemma~2.1(d) in \cite{PZ}.

A ring topology is \emph{locally bounded} if there is a bounded
neighborhood $V$ of 0.  In this case, the set $\{c V : c \in
K^\times\}$ is a neighborhood basis of 0, by Lemma~2.1(e) in
\cite{PZ}.

If $R$ is a proper subring of $K$ and $K = \Frac(R)$, then $R$ induces
a locally bounded ring topology $\tau_R$ on $K$, for which either of
the following families are a neighborhood basis:
\begin{itemize}
\item The set of rescalings $c R$, where $c \in K^\times$.
\item The set of nonzero ideals of $R$.
\end{itemize}
Up to local equivalence, every locally bounded ring topology arises in
this way (\cite{PZ}, Theorem~2.2(a)).

We shall need the following variant of Lemma~1.4 and Theorem~2.2 in
\cite{PZ}:
\begin{proposition} \label{gap:prop}
  Let $(K,\tau)$ be a field with a Hausdorff non-discrete
  $\omega$-complete locally bounded ring topology.  Let $U$ be a
  bounded neighborhood of 0.  Let $R$ be the subring generated by $U$.
  Then $R$ is a bounded neighborhood of 0, $\Frac(R) = K$, and $\tau =
  \tau_R$.
\end{proposition}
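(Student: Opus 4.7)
The plan is to exploit $\omega$-completeness to promote boundedness of finite-level subsets of $R$ to boundedness of $R$ itself. Filter $R$ as $R = \bigcup_{n \in \Nn} R_n$, where $R_n$ is the set of $\Zz$-linear combinations with at most $n$ terms and coefficients of absolute value at most $n$ of products of at most $n$ elements of $U$. Each $R_n$ is built from the bounded set $U$ by finitely many additions, negations, and multiplications, so each $R_n$ is itself bounded (finite sums and products of bounded sets are bounded).

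The key step is to promote boundedness of each $R_n$ to boundedness of all of $R$. Using boundedness of $R_n$, pick a neighborhood $V_n$ of $0$ with $R_n \cdot V_n \subseteq U$. By $\omega$-completeness, $V_\infty := \bigcap_n V_n$ is still a neighborhood of $0$, and since every $r \in R$ lies in some $R_n$ we obtain $R \cdot V_\infty \subseteq U$. Given any neighborhood $W$ of $0$, use boundedness of $U$ to pick $c' \in K^\times$ with $c' U \subseteq W$, and pick any nonzero $v \in V_\infty$ (which exists by non-discreteness). Then $(c'v) R \subseteq c' U \subseteq W$, so $R$ is bounded. Since $U \subseteq R$, $R$ is a neighborhood of $0$; together with the Hausdorff hypothesis, boundedness also forces $R \ne K$.

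From here, $\tau = \tau_R$ is immediate from Lemma~2.1(e) of \cite{PZ}: a bounded neighborhood $R$ of $0$ has the rescalings $\{cR : c \in K^\times\}$ as a neighborhood basis of $0$, which is exactly the defining neighborhood basis of $\tau_R$. For $\Frac(R) = K$, given $a \in K^\times$, continuity of multiplication at $(a,0)$ combined with $R$ being a neighborhood of $0$ yields a neighborhood $V$ of $0$ with $a V \subseteq R$. The intersection $V \cap R$ is a neighborhood of $0$ in a non-discrete topology, hence contains a nonzero element $s$; then both $s$ and $as$ lie in $R$, so $a = (as)/s \in \Frac(R)$.

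The only substantive obstacle is the boundedness of $R$ itself: a naive bound on $R_n$ does not extend to the infinite union $R$, so one must diagonalize through the sequence of witnesses $V_n$ to obtain a single neighborhood $V_\infty$ that witnesses boundedness for every level at once. This diagonalization is exactly where the $\omega$-completeness hypothesis is used, and it is also what distinguishes this result from the finite-level manipulations in Lemma~1.4 of \cite{PZ}.
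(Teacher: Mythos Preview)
Your proof is correct and follows essentially the same strategy as the paper: filter $R$ as a countable increasing union of bounded sets, invoke $\omega$-completeness to conclude $R$ itself is bounded, then read off $\tau = \tau_R$ from Lemma~2.1(e) of \cite{PZ} and $\Frac(R) = K$ from non-discreteness. The paper uses the recursive filtration $U_0 = U \cup \{0,1\}$, $U_{i+1} = U_i \cup (U_i - U_i) \cup (U_i \cdot U_i)$ and cites the fact (from the start of \S2 in \cite{PZ}) that a countable union of bounded sets is bounded in an $\omega$-complete topology; you instead filter by polynomial complexity and unpack that citation by hand via the diagonalization $V_\infty = \bigcap_n V_n$, which is exactly the content of the cited fact. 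One small point: for $\bigcup_n R_n = R$ to hold you need the empty product (so that $1 \in R_n$); the paper sidesteps this by explicitly adjoining $\{0,1\}$ at stage zero.
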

\begin{proof}
  Recursively define $U_0 \subseteq U_1 \subseteq U_2 \subseteq \cdots$ by
  \begin{itemize}
  \item $U_0 = U \cup \{0,1\}$
  \item $U_{i+1} = U_i \cup (U_i - U_i) \cup (U_i \cdot U_i)$.
  \end{itemize}
  Then each $U_i$ is a bounded set, by Lemma~2.1 in \cite{PZ}.  By the
  comment at the start of \S 2 in \cite{PZ}, the union of the $U_i$'s
  is bounded.  This union is $R$.  Also, $R \supseteq U$, so $R$ is a
  neighborhood of 0.  Then $R$ is a bounded neighborhood, so the
  family of sets $\{c R : c \in K^\times\}$ is a neighborhood basis of
  0, by Lemma~2.1(e) in \cite{PZ}.  To see that $\Frac(R) = K$, note
  that for any $a \in K^\times$, the neighborhood $R \cap a R$ must
  strictly exceed $\{0\}$ (as the topology is non-discrete).  This
  implies that $a \in \Frac(R)$.
\end{proof}

\section{Rings of finite weight}
\subsection{Cube rank}
Let $R$ be a ring and $M$ be an $R$-module.  Let $\crk(M)$ or
$\crk_R(M)$ denote the \emph{cube rank} of $M$ as an $R$-module
(\cite{prdf3}, Definition~6.6).  Cube rank is an element of $\Nn \cup
\{\infty\}$ and can be characterized in two ways:
\begin{itemize}
\item By Remark~6.7 in \cite{prdf3}, $\crk(M) \ge n$ if and only if
  there are submodules $N' \le N \le M$ such that the subquotient
  $N/N'$ is isomorphic to a direct sum of $n$ non-trivial $R$-modules.
\item By Proposition~7.3 in \cite{prdf4}, $\crk(M) \ge n$ if and only
  if there are $m_1, \ldots, m_n \in M$ such that no $m_i$ is
  generated by the others:
  \begin{equation*}
    \forall i : m_i \notin R \cdot m_1 + \cdots + R \cdot m_{i-1} + R
    \cdot m_{i+1} + \cdots + R \cdot m_n.
  \end{equation*}
\end{itemize}
\begin{remark}
  Cube rank was called ``reduced rank'' in \cite{prdf,prdf3,prdf4},
  and is probably a well-known concept to people who study modules or
  lattices.  At a minimum, cube rank seems related to ``uniform
  dimension'' and ``hollow dimension'' in module theory.
\end{remark}
By Proposition~6.9 in \cite{prdf3}, cube rank has the following properties:
\begin{itemize}
\item $\crk(M) > 0$ if and only if $M$ is non-trivial.
\item $\crk(M \oplus N) = \crk(M) + \crk(N)$.
\item If $N$ is a submodule, quotient, or subquotient of $M$, then
  $\crk(N) \le \crk(M)$.
\item In a short exact sequence
  \begin{equation*}
    0 \to N \to M \to N' \to 0,
  \end{equation*}
  we have $\crk(M) \le \crk(N) + \crk(N')$.
\end{itemize}
Moreover, $\crk(-)$ is the smallest $\Nn \cup \{\infty\}$-valued
function with these properties (\cite{prdf3}, Proposition~E.2).

\begin{lemma}\label{ring-change:lem}
  If $M$ is an $R'$-module and $R \subseteq R'$, then $\crk_R(M) \ge
  \crk_{R'}(M)$.
\end{lemma}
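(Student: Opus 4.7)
The plan is to use the second characterization of cube rank listed in the excerpt (from Proposition~7.3 in \cite{prdf4}) and show that any witness to $\crk_{R'}(M) \ge n$ also witnesses $\crk_R(M) \ge n$.

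Concretely, suppose $\crk_{R'}(M) \ge n$ and choose $m_1, \ldots, m_n \in M$ such that for every $i$,
\begin{equation*}
  m_i \notin R' \cdot m_1 + \cdots + R' \cdot m_{i-1} + R' \cdot m_{i+1} + \cdots + R' \cdot m_n.
\end{equation*}
Since $R \subseteq R'$, the $R$-submodule $\sum_{j \ne i} R \cdot m_j$ is contained in the $R'$-submodule $\sum_{j \ne i} R' \cdot m_j$. Therefore $m_i$ also fails to lie in $\sum_{j \ne i} R \cdot m_j$, so the same tuple $(m_1, \ldots, m_n)$ witnesses $\crk_R(M) \ge n$. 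Taking the supremum over $n$ yields $\crk_R(M) \ge \crk_{R'}(M)$.

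There is essentially no obstacle here; the argument is immediate once one has the generator-style characterization in hand. (Alternatively, one could run the proof via the subquotient characterization: any $R'$-submodules $N' \le N \le M$ with $N/N'$ an $R'$-module direct sum of $n$ nonzero summands are also $R$-submodules whose subquotient is an $R$-module direct sum of $n$ nonzero summands, because restriction of scalars preserves direct sum decompositions and nontriviality. But the generator characterization gives the cleanest one-line argument.)
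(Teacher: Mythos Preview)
Your proof is correct and essentially identical to the paper's: both take witnesses $m_1,\ldots,m_n$ for $\crk_{R'}(M)\ge n$ and observe that $\sum_{j\ne i} R\cdot m_j \subseteq \sum_{j\ne i} R'\cdot m_j$, so the same tuple witnesses $\crk_R(M)\ge n$.
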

\begin{proof}
  If $\crk_{R'}(M) \ge n$, witnessed by $m_1, \ldots, m_n$, then for
  any $i$,
  \begin{equation*}
    m_i \notin \sum_{j \ne i} R' \cdot m_j \supseteq \sum_{j \ne i} R
    \cdot m_j. \qedhere
  \end{equation*}
\end{proof}

\begin{remark}
  Occasionally, we will also need cube rank on modular lattices.  If
  $\Lambda$ is a modular lattice, then $\crk(\Lambda)$ can be
  characterized in one of several equivalent ways.
  \begin{itemize}
  \item $\crk(\Lambda) \ge n$ if there is a strict $n$-cube in $M$, in
    the sense of Definition~9.13 in \cite{prdf}.
  \item $\crk(\Lambda) \le n$ if for any $a_0, \ldots, a_n \in
    \Lambda$, there is $i$ such that
    \begin{equation*}
      a_0 \wedge \cdots \wedge a_n = a_0 \wedge \cdots \wedge
      \widehat{a_i} \wedge \cdots \wedge a_n.
    \end{equation*}
  \item $\crk(\Lambda) \le n$ if for any $a_0, \ldots, a_n \in
    \Lambda$, there is $i$ such that
    \begin{equation*}
      a_0 \vee \cdots \vee a_n = a_0 \vee \cdots \vee
      \widehat{a_i} \vee \cdots \vee a_n.
    \end{equation*}
  \end{itemize}
  These definitions are equivalent by Proposition~6.3 of \cite{prdf3}.  If $a, b$
  are two elements of $\Lambda$ with $a \ge b$, then
  $\crk_\Lambda(a/b)$ will denote the cube rank of the interval
  $[b,a]$, a sublattice of $\Lambda$.
\end{remark}

\subsection{$W_n$-rings}
Let $R$ be an integral domain with fraction field $K$.
\begin{lemma}\label{kr:lem}
  $\crk_R(R) = \crk_R(K)$.
\end{lemma}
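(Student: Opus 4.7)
The plan is to prove the two inequalities $\crk_R(R) \le \crk_R(K)$ and $\crk_R(K) \le \crk_R(R)$ separately, using the generator-based characterization of cube rank stated in the second bullet above (Proposition~7.3 of \cite{prdf4}).

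For $\crk_R(R) \le \crk_R(K)$: the ring $R$ is an $R$-submodule of $K$, so this is immediate from the fact, listed in Proposition~6.9 of \cite{prdf3}, that cube rank does not increase on submodules.

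For the reverse inequality, suppose $\crk_R(K) \ge n$, witnessed by elements $k_1, \ldots, k_n \in K$ with $k_i \notin \sum_{j \ne i} R \cdot k_j$ for each $i$. Because $K = \Frac(R)$, we may write each $k_i$ as a fraction and take a common denominator $b \in R \setminus \{0\}$ such that $r_i := b k_i \in R$ for every $i$. I claim the $r_i$'s witness $\crk_R(R) \ge n$. Suppose for contradiction that $r_i = \sum_{j \ne i} s_j r_j$ for some $s_j \in R$. Dividing through by $b$ inside $K$ gives $k_i = \sum_{j \ne i} s_j k_j$ with $s_j \in R$, contradicting the choice of the $k_j$. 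Hence $\crk_R(R) \ge n$, and letting $n \to \crk_R(K)$ finishes the proof.

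There is no real obstacle here: the only content is the observation that clearing denominators is an $R$-linear operation that preserves nontrivial $R$-linear relations. This is essentially saying that $K$ is a directed union (or localization) of rank-one free $R$-modules $\tfrac{1}{b}R \cong R$, so any finite configuration in $K$ that witnesses cube rank can be pulled back into a single such copy of $R$.
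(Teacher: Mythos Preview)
Your proof is correct and essentially identical to the paper's own argument: the paper also uses the submodule inequality for one direction and clears denominators (with a single nonzero $s \in R$) to transport a witnessing tuple from $K$ into $R$ for the other.
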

\begin{proof}[Proof (cf. Lemma~10.25 in \cite{prdf4})]
  $\crk_R(R) \le \crk_R(K)$ because $R$ is a submodule of $K$.
  Conversely, suppose $\crk_R(K) \ge n$.  Then there are $m_1, \ldots,
  m_n \in K$ such that for any $i$,
  \begin{equation*}
    m_i \notin \sum_{j \ne i} R \cdot m_j.
  \end{equation*}
  Take non-zero $s \in R$ such that $s m_i \in R$ for all $i$.  Then the set $\{s
  m_1, \ldots, s m_n\}$ shows $\crk_R(R) \ge n$.
\end{proof}
\begin{definition}
  The \emph{weight} of $R$, written $\wt(R)$ is the value $\crk_R(R) = \crk_R(K)$.
  We say that $R$ is a \emph{$W_n$-ring (on $K$)} if $\wt(R) \le n$.
\end{definition}
\begin{proposition}
  $R$ is a $W_1$-ring if and only if $R$ is a
  valuation ring.
\end{proposition}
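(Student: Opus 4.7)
The plan is to unpack the definition of $\wt(R) \le 1$ using the second characterization of cube rank and then observe that it is literally the valuation ring axiom. By Lemma~\ref{kr:lem}, $\wt(R) \le 1$ is equivalent to $\crk_R(K) \le 1$. By the element-wise characterization of cube rank recalled in Section~2.1, $\crk_R(K) \ge 2$ means there exist $m_1, m_2 \in K$ with $m_1 \notin R \cdot m_2$ and $m_2 \notin R \cdot m_1$. So $\crk_R(K) \le 1$ is equivalent to the statement that for every pair $m_1, m_2 \in K$, one lies in the cyclic $R$-submodule generated by the other.

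For the ``if'' direction, assume $R$ is a valuation ring. Given any $m_1, m_2 \in K$, the case where one is zero is trivial; otherwise $a := m_1/m_2 \in K^\times$, so either $a \in R$ (giving $m_1 \in R m_2$) or $a^{-1} \in R$ (giving $m_2 \in R m_1$). Hence $\crk_R(K) \le 1$, and $\crk_R(K) \ge 1$ because $K$ is non-trivial. So $\wt(R) = 1$.

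For the ``only if'' direction, assume $\wt(R) \le 1$. Given $a \in K^\times$, apply the characterization to $m_1 = a$, $m_2 = 1$: either $a \in R \cdot 1 = R$ or $1 \in R \cdot a$, i.e., $a^{-1} \in R$. This is exactly the valuation ring condition, so $R$ is a valuation ring.

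There is no real obstacle — the statement is essentially a repackaging of the element-wise definition of cube rank, applied to pairs in $K$ and then converted to ratios. The only thing to be mildly careful about is handling the zero element separately and noting that $\wt(R) \ge 1$ is automatic from $R \ne 0$.
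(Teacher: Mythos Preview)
Your proof is correct and follows essentially the same approach as the paper: unpack the element-wise characterization of $\crk_R \le 1$ and match it against a standard valuation-ring axiom. The only cosmetic difference is that the paper works directly with $\crk_R(R)$ and the ``divisibility is total on $R$'' form of the valuation-ring axiom, whereas you pass through $\crk_R(K)$ via Lemma~\ref{kr:lem} and use the equivalent ``$a \in R$ or $a^{-1} \in R$'' form.
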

\begin{proof}
  By definition, $\crk_R(R) \le 1$ means that for any $x, y \in R$,
  \begin{equation*}
    x \in R \cdot y \text{ or } y \in R \cdot x.
  \end{equation*}
  This is the definition of a valuation ring.
\end{proof}

\begin{lemma}\label{super:lem}
  Let $R$ be a $W_n$-ring on $K$.  Let $L/K$ be
  a finite extension of degree $d$.  Let $R'$ be a subring of $L$
  containing $R$.  Then $R'$ is a $W_{dn}$-ring.

  In particular, if $R'$ is a subring of $K$ containing $R$, then $R'$
  is a $W_n$-ring.
\end{lemma}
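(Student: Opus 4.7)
The plan is to chase the inequalities through Lemma~\ref{kr:lem} and Lemma~\ref{ring-change:lem}, using the fact that $L$ is a finite direct sum of copies of $K$ as a $K$-module.

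First I would unwind the definition: $\wt(R') = \crk_{R'}(R')$. Let $L' = \Frac(R')$; since $R$ is contained in $R'$, we have $K \subseteq L'\subseteq L$. By Lemma~\ref{kr:lem} applied to $R'$, $\crk_{R'}(R') = \crk_{R'}(L')$. Then, because $L'$ sits as a sub-$R'$-module of $L$, the submodule property of cube rank gives $\crk_{R'}(L') \le \crk_{R'}(L)$. So it suffices to bound $\crk_{R'}(L)$.

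Next I would move the base ring back from $R'$ down to $R$. By Lemma~\ref{ring-change:lem} (noting $R \subseteq R'$ and that $L$ is an $R'$-module), we have $\crk_{R'}(L) \le \crk_R(L)$. Finally, since $[L:K] = d$, $L \cong K^d$ as $K$-modules, hence also as $R$-modules. The additivity of cube rank on finite direct sums then gives
\begin{equation*}
  \crk_R(L) = \crk_R(K^d) = d\cdot \crk_R(K) = d\cdot \wt(R) \le dn,
\end{equation*}
using Lemma~\ref{kr:lem} once more to identify $\crk_R(K)$ with $\wt(R)$. Combining the chain yields $\wt(R') \le dn$, so $R'$ is a $W_{dn}$-ring. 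The ``in particular'' clause is the case $L = K$, $d = 1$.

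I do not anticipate any real obstacle: the argument is a short four-step chase (Lemma~\ref{kr:lem}, submodule monotonicity, Lemma~\ref{ring-change:lem}, additivity on direct sums), and the only mild subtlety is being careful that $\Frac(R')$ may be a proper subfield of $L$, which is handled cleanly by passing up to $L$ before changing base rings.
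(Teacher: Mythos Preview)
Your proof is correct and follows essentially the same chain of inequalities as the paper's proof: $\crk_{R'}(R') \le \crk_{R'}(L) \le \crk_R(L) = \crk_R(K^d) = d\cdot\crk_R(K) \le dn$. The only difference is cosmetic: the paper obtains the first inequality directly from $R' \subseteq L$ via submodule monotonicity, whereas you route through $L' = \Frac(R')$ using Lemma~\ref{kr:lem}; this extra step is harmless but unnecessary.
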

\begin{proof}
  By Lemmas~\ref{ring-change:lem} and \ref{kr:lem},
  \begin{equation*}
    \crk_{R'}(R') \le \crk_{R'}(L) \le \crk_R(L) = \crk_R(K^d) = d \cdot \crk_R(K) \le d n. \qedhere
  \end{equation*}
\end{proof}
\begin{corollary}
  Let $R$ be a $W_n$-ring.  Let $\mm$ be a maximal ideal.  Then the
  localization $R_\mm$ is a $W_n$-ring.
\end{corollary}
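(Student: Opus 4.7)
The plan is to apply the ``in particular'' clause of Lemma~\ref{super:lem}, which states that any subring $R'$ of $K = \Frac(R)$ containing $R$ is automatically a $W_n$-ring. Thus it suffices to exhibit $R_\mm$ as such a subring.

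For the verification: since $R$ is an integral domain (the standing assumption throughout the subsection on $W_n$-rings), the localization $R_\mm$ at the multiplicatively closed set $R \setminus \mm$ embeds naturally into $\Frac(R) = K$ as the subring $\{a/s : a \in R,\ s \in R \setminus \mm\}$, and this subring visibly contains $R$ (take $s = 1$). I would then apply Lemma~\ref{super:lem} with $L = K$, $d = 1$, and $R' = R_\mm$, which immediately yields $\wt(R_\mm) \le n$, i.e., $R_\mm$ is a $W_n$-ring.

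There is no real obstacle here; the corollary is essentially just the $d = 1$ case of Lemma~\ref{super:lem} applied to a specific overring. It is worth noting that the maximality of $\mm$ plays no role in the argument: the same proof works for any prime ideal, or more generally for any localization of $R$ at a multiplicative subset of nonzero elements, since the only property used is that $R_\mm$ sits between $R$ and $K$.
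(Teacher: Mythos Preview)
Your proof is correct and matches the paper's approach: the paper states this corollary immediately after Lemma~\ref{super:lem} with no explicit proof, treating it as a direct consequence of the ``in particular'' clause, exactly as you do. Your additional remark that maximality of $\mm$ is irrelevant is also correct.
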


\subsection{Maximal ideals and the integral closure}
\begin{proposition}
  Let $R$ be a $W_n$-ring.  Then $R$ has at most $n$ maximal ideals.
  In particular, $W_n$-rings are semilocal.
\end{proposition}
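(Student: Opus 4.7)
The plan is to assume for contradiction that $R$ has at least $n+1$ distinct maximal ideals $\mm_1, \ldots, \mm_{n+1}$, and then exhibit $R$-submodule structure forcing $\crk_R(R) \ge n+1$, contradicting $\wt(R) \le n$.

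First, I would invoke the Chinese Remainder Theorem: distinct maximal ideals are pairwise comaximal, so
\[ R/(\mm_1 \cap \cdots \cap \mm_{n+1}) \;\cong\; \bigoplus_{i=1}^{n+1} R/\mm_i \]
as $R$-modules. Each $R/\mm_i$ is a field, hence a non-trivial $R$-module, so by the listed property of cube rank that $\crk(M) > 0$ iff $M$ is non-trivial, $\crk_R(R/\mm_i) \ge 1$ for each $i$. Additivity of cube rank on direct sums then gives
\[ \crk_R\!\left( \bigoplus_{i=1}^{n+1} R/\mm_i \right) \;=\; \sum_{i=1}^{n+1} \crk_R(R/\mm_i) \;\ge\; n+1. \]

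Second, $R/(\mm_1 \cap \cdots \cap \mm_{n+1})$ is a quotient of $R$, and cube rank is monotone under quotients, so $\crk_R(R) \ge n+1$. Since $\wt(R) = \crk_R(R) \le n$ by assumption, this is the desired contradiction, so $R$ has at most $n$ maximal ideals. A semilocal ring is by definition a ring with only finitely many maximal ideals, so the ``in particular'' clause is immediate.

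There is no real obstacle here: the argument is a routine combination of CRT with the additive and quotient-monotone properties of cube rank recalled just before the statement. The only mild subtlety is making sure we apply CRT to finitely many maximal ideals (which is fine, since we only need to rule out $n+1$ of them at a time), rather than attempting to handle a potentially infinite family all at once.
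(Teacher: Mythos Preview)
Your proof is correct and follows essentially the same approach as the paper: both use the Chinese Remainder Theorem to exhibit $R/(\mm_1 \cap \cdots \cap \mm_{n+1})$ as a direct sum of $n+1$ non-trivial $R$-modules, forcing $\crk_R(R) \ge n+1$. The paper's version is slightly terser (it implicitly appeals to the subquotient characterization of cube rank rather than spelling out additivity and quotient-monotonicity), but the argument is the same.
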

\begin{proof}
  If $\mm_1, \ldots, \mm_{n+1}$ are distinct maximal ideals of $R$,
  then
  \begin{equation*}
    R/(\mm_1 \cap \cdots \cap \mm_{n+1}) \cong (R/\mm_1) \times \cdots \times (R/\mm_{n+1})
  \end{equation*}
  by the Chinese remainder theorem, and so $\wt(R) = \crk_R(R) \ge n+1$.
\end{proof}
\begin{corollary}\label{jac:cor}
  If $R$ is a $W_n$-ring on $K$, and $R \ne K$, then the Jacobson
  radical of $R$ is non-trivial.
\end{corollary}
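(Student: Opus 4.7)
The plan is to invoke the preceding proposition, which already bounds the number of maximal ideals of $R$ by $n$, and then derive a contradiction from the assumption that the Jacobson radical vanishes, using the fact that $R$ is a domain.

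First I would list the maximal ideals: by the preceding proposition, $R$ has finitely many maximal ideals $\mm_1, \ldots, \mm_k$ with $k \le n$, and since $R$ is a nonzero ring it has at least one, so $k \ge 1$. The Jacobson radical is $J = \mm_1 \cap \cdots \cap \mm_k$. Now suppose for contradiction that $J = 0$. Since distinct maximal ideals are pairwise comaximal, the Chinese remainder theorem gives
\[
R \;\cong\; R/J \;\cong\; (R/\mm_1) \times \cdots \times (R/\mm_k),
\]
a finite product of fields. But $R$ is an integral domain, being a subring of $K$, so the product decomposition must be trivial: $k = 1$ and $R \cong R/\mm_1$ is a field. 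Since $\Frac(R) = K$, this forces $R = K$, contradicting the hypothesis $R \ne K$.

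The argument is a direct consequence of the preceding semilocality proposition together with elementary commutative algebra, so no significant obstacle arises; the only point requiring attention is the use of the domain hypothesis to rule out a nontrivial product of residue fields.
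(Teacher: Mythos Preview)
Your proof is correct and matches the intended argument: the paper states this as an immediate corollary of the semilocality proposition without further proof, and your use of the Chinese remainder theorem together with the domain hypothesis is exactly the standard way to fill in the details. One could alternatively observe that in a domain the product $\mm_1 \cdots \mm_k$ of the nonzero maximal ideals is nonzero and contained in $J$, but your route via the product decomposition is equally valid.
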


\begin{proposition}\label{multi:prop}
  Let $\Oo_1, \ldots, \Oo_n$ be pairwise incomparable valuation rings
  on a field $K$.  Then the intersection $\bigcap_{i = 1}^n \Oo_i$ is
  a ring of weight $n$.
\end{proposition}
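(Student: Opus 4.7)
The plan is to prove $\wt(R) \ge n$ and $\wt(R) \le n$ separately, with both halves pivoting on the approximation theorem for pairwise independent valuations (which applies because pairwise incomparable valuation rings correspond to pairwise independent valuations). I write $v_1, \ldots, v_n$ for the valuations of $\Oo_1, \ldots, \Oo_n$ and set $\mm_k := R \cap \mm_{\Oo_k}$.

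For the lower bound I would first verify that $\mm_1, \ldots, \mm_n$ are $n$ pairwise distinct maximal ideals of $R$: given $a \in R \setminus \mm_k$, strong approximation provides $b \in R$ with $v_k(ab - 1) > 0$ and $v_{k'}(b)$ as large as desired for $k' \ne k$, showing $a$ is a unit modulo $\mm_k$; distinctness of the $\mm_k$'s is similar. Then the Chinese remainder argument used in the proof of the preceding proposition gives a surjection $R \twoheadrightarrow \prod_{k=1}^n R/\mm_k$ onto a direct sum of $n$ nontrivial $R$-modules, hence $\crk_R(R) \ge n$.

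For the upper bound, given $s_0, \ldots, s_n \in R$ (not all zero), I would for each $k$ pick $j_k$ minimizing $v_k(s_j)$ and use pigeonhole to find $i \in \{0, \ldots, n\}$ with $i \ne j_k$ for every $k$, so that $v_k(s_i) \ge v_k(s_{j_k})$ for all $k$. A single application of strong approximation yields $e_1, \ldots, e_n \in K$ with $v_k(e_k - 1) > 0$ and $v_{k'}(e_k) \ge \max_j v_{k'}(s_j)$ for $k' \ne k$; these bounds are engineered so that each $f_k := e_k s_i / s_{j_k}$ lands in $R$ and so that $u := e_1 + \cdots + e_n$ has $v_k(u) = 0$ at every $k$, i.e.\ $u \in R^\times$. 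The identity
\[
s_i \;=\; u^{-1} \sum_{k=1}^n f_k s_{j_k} \;\in\; \sum_{j \ne i} R \cdot s_j
\]
then exhibits $s_i$ as an $R$-linear combination of the other $s_j$, giving $\crk_R(R) \le n$. The main delicacy is the valuation bookkeeping in the upper bound (verifying that one set of approximation parameters makes all $f_k$'s land in $R$ simultaneously); granting the approximation theorem, both halves are short.
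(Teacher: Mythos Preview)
Your argument has a genuine gap: the parenthetical claim that ``pairwise incomparable valuation rings correspond to pairwise independent valuations'' is false. Independence of $\Oo_1$ and $\Oo_2$ means that the only valuation ring of $K$ containing both is $K$ itself; this is strictly stronger than incomparability. For instance, if $\Oo$ is a nontrivial valuation ring on $K$ whose residue field $k$ admits two incomparable valuation rings $\bar\Oo_1, \bar\Oo_2$, then their preimages $\Oo_1, \Oo_2 \subsetneq \Oo$ in $K$ are incomparable but dependent. The strong approximation theorem requires independence, and your construction of the $e_k$ genuinely breaks here: any $e_1$ with $v_1(e_1 - 1) > 0$ lies in $\Oo_1^\times \subseteq \Oo^\times$, so $v_2(e_1)$ is confined to the convex subgroup of $\Gamma_{v_2}$ corresponding to $\Oo$ and cannot be pushed above a target $\max_j v_2(s_j)$ lying outside that subgroup.

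Both halves are salvageable without approximation via the classical fact (e.g.\ Bourbaki, \emph{Alg.\ Comm.}\ VI.\S7) that for pairwise incomparable $\Oo_1, \ldots, \Oo_n$ the ring $R = \bigcap_i \Oo_i$ has exactly the $n$ maximal ideals $\mm_k = \mm_{\Oo_k} \cap R$, with localizations $R_{\mm_k} = \Oo_k$. The lower bound then follows from CRT exactly as you outline. For the upper bound, keep your pigeonhole choice of $i$; then for each $k$ one has $s_i \in \Oo_k \cdot s_{j_k} \subseteq \big(\sum_{j \ne i} R s_j\big)_{\mm_k}$, and the local-to-global principle for membership in a submodule gives $s_i \in \sum_{j \ne i} R s_j$ directly, with no need to manufacture the $e_k$'s. (The paper itself gives no argument here, citing Lemma~6.5 of \cite{prdf3}.)
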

\begin{proof}
  Lemma~6.5 in \cite{prdf3}.
\end{proof}
In the language of \cite{prdf2}, multivaluation rings have finite weight.

\begin{proposition}\label{iclose:prop}
  Let $R$ be a ring of weight $n$ on a field $K$.  Then
  the integral closure of $R$ (in $K$) is a multivaluation ring, an
  intersection of at most $n$ valuation rings on $K$.
\end{proposition}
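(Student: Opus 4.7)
The plan is to combine the classical Krull characterization of the integral closure with the weight bound from Proposition~\ref{multi:prop}. Let $\tilde R$ denote the integral closure of $R$ in $K$. First, I would observe that $\tilde R$ is itself a $W_n$-ring by Lemma~\ref{super:lem} applied with $L = K$ (so $d = 1$) and $R' = \tilde R$.

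Next I would invoke the classical theorem that for any domain $S$ with fraction field $K$, the integral closure of $S$ in $K$ equals the intersection of all valuation rings of $K$ containing $S$. This gives
\[
  \tilde R \,=\, \bigcap_{V \in \Sigma} V,
\]
where $\Sigma$ is the set of valuation rings of $K$ containing $\tilde R$. To thin this intersection down, let $\Sigma_{\min}$ be the subset of inclusion-minimal elements of $\Sigma$. Using the classical fact that the intersection of any chain of valuation rings of $K$ is again a valuation ring of $K$, Zorn's lemma produces, for every $x \in K \setminus \tilde R$, a minimal valuation ring $W \in \Sigma_{\min}$ with $x \notin W$: start from some $V_0 \in \Sigma$ excluding $x$ (guaranteed by the Krull theorem above), apply Zorn inside the sub-poset $\{W \in \Sigma : W \subseteq V_0,\ x \notin W\}$, and note that minimality there promotes to global minimality in $\Sigma$, because any strictly smaller valuation ring in $\Sigma$ would still lie below $V_0$ and still avoid $x$. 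Consequently $\tilde R = \bigcap_{V \in \Sigma_{\min}} V$.

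Finally, the elements of $\Sigma_{\min}$ are pairwise incomparable by construction. If $\Sigma_{\min}$ contained $n + 1$ distinct elements $V_0, \ldots, V_n$, then Proposition~\ref{multi:prop} would force $\bigcap_{i = 0}^{n} V_i$ to have weight $n + 1$; but this intersection is a subring of $K$ containing $\tilde R$, hence is itself a $W_n$-ring by Lemma~\ref{super:lem}. This contradiction shows $|\Sigma_{\min}| \le n$, so $\tilde R$ is an intersection of at most $n$ valuation rings of $K$, as required.

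The main technical wrinkle is the Zorn step: one must set it up carefully so that minimality in the auxiliary sub-poset really does promote to minimality in all of $\Sigma$. Otherwise the argument is a straightforward packaging of the classical integral-closure theorem with the already-proved Lemma~\ref{super:lem} and Proposition~\ref{multi:prop}.
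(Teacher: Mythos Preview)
Your proof is correct and rests on the same two inputs as the paper's---the Krull description of the integral closure as an intersection of overlying valuation rings, together with the antichain bound coming from Proposition~\ref{multi:prop} and Lemma~\ref{super:lem}---but the combinatorial endgame is organized differently. The paper observes that the poset $\mathcal{P}$ of valuation rings containing $R$ has no antichain of size $n+1$, then invokes Dilworth's theorem to write $\mathcal{P}$ as a union of $n$ chains $\mathcal{C}_1,\ldots,\mathcal{C}_n$; each $\bigcap \mathcal{C}_i$ is itself a valuation ring, and $\tilde R = \bigcap_i \bigcap \mathcal{C}_i$. You instead use Zorn's lemma to produce minimal elements of $\Sigma$, show they already recover the full intersection, and bound their number by the same antichain argument. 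The Dilworth route is a line shorter on the page; your Zorn route avoids quoting the infinite form of Dilworth (whose proof itself uses choice/compactness) and produces the \emph{canonical} valuation rings---the minimal ones---rather than chain-dependent intersections, which is arguably more informative.
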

\begin{proof}
  Let $\tilde{R}$ denote the integral closure.  Let $\mathcal{P}$ be
  the class of valuation rings on $K$ containing $R$.  On general
  grounds, $\tilde{R} = \bigcap \mathcal{P}$.  By
  Proposition~\ref{multi:prop} and Lemma~\ref{super:lem},
  $\mathcal{P}$ contains no antichains of size $n+1$.  By Dilworth's
  theorem, $\mathcal{P}$ is a union of $n$ chains $\mathcal{C}_1,
  \ldots, \mathcal{C}_n$.  Each intersection $\bigcap \mathcal{C}_i$
  is a valuation ring, and
  \begin{equation*}
    \bigcap \mathcal{P} = \bigcap_{i= 1}^n \left( \bigcap
    \mathcal{C}_i\right). \qedhere
  \end{equation*}
\end{proof}

\begin{remark}\label{nont:rem}
  If $R$ is non-trivial, i.e., $R \subseteq K$, then the integral
  closure is non-trivial.  To see this, take $\pp$ a maximal ideal of
  $R$.  Then $\pp \ne 0$.  Take nonzero $x \in \pp$.  An easy argument
  shows that $1/x$ is not integral over $R$.
\end{remark}

\section{W-topologies}

\subsection{Topologies from W-rings}
Let $R$ be a $W_n$-ring on a field $K$.  Suppose $R$ is
non-trivial, i.e., $R \ne K$.  By Example~1.2 in \cite{PZ}, $R$ induces a
Hausdorff non-discrete locally bounded ring topology on $K$, in which
the family
\begin{equation*}
  \{ c R : c \in K^\times\}
\end{equation*}
is a neighborhoods basis of 0.  Equivalently, the non-zero ideals of
$R$ are a neighborhood basis of 0.
\begin{proposition}
  The topology induced by $R$ is a field topology, i.e., division is
  continuous.
\end{proposition}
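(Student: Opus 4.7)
The plan is to verify that inversion $x \mapsto x^{-1}$ is continuous, since the other field operations are already continuous by virtue of $\tau_R$ being a ring topology. By multiplying and translating, it suffices to check continuity of inversion at $1 \in K$: given a neighborhood $U$ of $0$, produce a neighborhood $V$ of $0$ such that $(1 + V) \setminus \{0\}$ lies in $(1 + U) \cdot \{\text{units}\}$-friendly territory, i.e.\ $(1 + V)^{-1} \subseteq 1 + U$.

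Since the non-zero ideals of $R$ form a neighborhood basis of $0$, I may assume $U = I$ for some non-zero ideal $I$ of $R$, and I would look for $V$ in the same form. The obvious attempt $V = I$ fails because for $y \in I$ one only knows $1 + y \in K^\times$; the inverse need not lie in $R$ at all, so there is no reason for $(1+y)^{-1} - 1$ to land in $I$. This is exactly where the Jacobson radical $J$ of $R$ enters: by Corollary~\ref{jac:cor}, $J$ is a non-zero ideal (here I use that $R \ne K$ and that $R$ is semilocal because it is a $W_n$-ring), and by the standard characterization of the Jacobson radical, $1 + J \subseteq R^\times$.

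The right choice is then $V := I \cap J$. This is a non-zero ideal: $I \cdot J$ is a product of non-zero ideals in the domain $R$, hence non-zero, and $I \cdot J \subseteq I \cap J$. So $V$ is a neighborhood of $0$. For any $y \in V \subseteq J$, the element $1 + y$ is a unit of $R$, and the identity
\begin{equation*}
   (1+y)^{-1} - 1 = -y \cdot (1+y)^{-1}
\end{equation*}
shows $(1+y)^{-1} - 1 \in y R \subseteq V \subseteq I$. Thus $(1+V)^{-1} \subseteq 1 + I$, as required.

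There is no serious obstacle here; the only conceptual step is recognizing that one cannot avoid restricting to a sub-ideal of $J$, because outside $1 + J$ the inverses could escape $R$ and land far from $1$ in the $\tau_R$-sense. Semilocality of $W_n$-rings (already established in the previous subsection) guarantees $J$ is large enough to serve as a neighborhood of $0$, and domain-ness of $R$ guarantees $I \cap J$ is still non-zero; both facts are essential.
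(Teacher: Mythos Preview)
Your proof is correct and essentially identical to the paper's: both reduce to showing $(1+V)^{-1} \subseteq 1+I$ with $V = I \cap J$ for $J$ the Jacobson radical, use Corollary~\ref{jac:cor} and the domain property to see $V \ne 0$, and then compute $(1+y)^{-1} - 1 = -y(1+y)^{-1} \in I$ using $1+y \in R^\times$.
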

\begin{proof}
  This holds because the Jacobson radical of $R$ is non-trivial, as in
  the proof of Theorem~2.2(b) in \cite{PZ}.  In more detail, to verify that
  division is continuous it suffices to prove the following: for any
  non-zero ideal $I \le R$, there is a non-zero ideal $I' \le R$ such
  that
  \begin{equation*}
    (1 + I')^{-1} \le 1 + I.
  \end{equation*}
  Let $J$ be the Jacobson radical of $R$.  By Corollary~\ref{jac:cor},
  $J \ne 0$.  Let $I' = I \cap J$.  The intersection is non-zero as
  $R$ is a domain.  Then for any $x \in I'$, we have
  \begin{align*}
    x \in I' &\implies x \in J \implies 1 + x \in R^\times \implies
    \frac{-x}{1 + x} \in I' \\ & \implies \frac{-x}{1+x} \in I \implies
    \frac{1}{1+x} \in 1 + I. \qedhere
  \end{align*}
\end{proof}

\subsection{W-topologies}

\begin{definition}
  Let $K$ be a field.  A \emph{$W_n$-set} is a subset $S \subseteq K$
  such that for any $x_1, \ldots, x_{n+1} \in K$, there is an $i \le
  n+1$ such that
  \begin{equation*}
    x_i \in x_1 \cdot S + \cdots + x_{i-1} \cdot S + x_{i+1} \cdot S +
    \cdots + x_{n+1} \cdot S.
  \end{equation*}
\end{definition}
Note that an integral domain $R$ is a $W_n$-ring if and only if $R$ is
a $W_n$-set in $\Frac(R)$.
\begin{definition}
  A \emph{$W_n$-topology} on a field $K$ is a (Hausdorff non-discrete)
  locally bounded ring topology on $K$ such that for every
  neighborhood $U \ni 0$, there is $c \in K^\times$ such that $c \cdot
  U$ is a $W_n$-set.
\end{definition}

\begin{remark}\label{local-class:rem}
  The class of $W_n$-topologies is a \emph{local class}, cut out
  by finitely many \emph{local sentences} in the sense of \cite{PZ}.
  Specifically, it is cut out by the axioms of locally bounded
  non-trivial Hausdorff ring topologies plus the local sentence
  \begin{equation*}
    \forall U \in \tau ~ \exists c \ne 0 ~ \forall x_1, \ldots, x_{n+1} ~
    \bigvee_{i = 1}^n \left( x_i \in \sum_{j \ne i} c \cdot x_j \cdot
    U \right).
  \end{equation*}
\end{remark}
\begin{remark}\label{suff:rem}
  A locally bounded ring topology is a $W_n$-topology if and
  only if some bounded neighborhood $V \ni 0$ is a $W_n$-set.  Indeed,
  if such a $V$ exists, then for any neighborhood $U \ni 0$ there is
  $c \in K^\times$ such that $c U \supseteq V$, because $V$ is
  bounded.  Then $cU$ is a $W_n$-set because $V$ is a $W_n$-set.
  Conversely, suppose the topology is a $W_n$-topology.  Let $U$
  be a bounded neighborhood of 0.  Then there is $c \in K^\times$ such
  that $c U$ is a $W_n$-set.  But $cU$ is again a bounded neighborhood
  of 0.
\end{remark}
\begin{proposition}
  Let $R$ be a non-trivial $W_n$-ring on a field $K$.  Then the
  induced field topology is a $W_n$ topology.
\end{proposition}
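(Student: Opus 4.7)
The plan is to invoke Remark~\ref{suff:rem}, which reduces the problem to exhibiting a single bounded neighborhood of $0$ that is a $W_n$-set. The obvious candidate is $R$ itself.

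First I would recall that in the induced topology, the family $\{cR : c \in K^\times\}$ is a neighborhood basis of $0$, and in particular $R$ is a neighborhood of $0$. Moreover $R$ is bounded in this topology: for any basic neighborhood $cR$, the element $c$ itself witnesses boundedness since $c \cdot R \subseteq cR$. So $R$ is a bounded neighborhood of $0$.

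Next, I would observe the remark immediately following the definition of $W_n$-set: an integral domain $R$ is a $W_n$-ring if and only if $R$ is a $W_n$-set in $\Frac(R) = K$. Since $R$ is assumed to be a $W_n$-ring on $K$, it follows that $R$ is a $W_n$-set.

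Thus $R$ is a bounded neighborhood of $0$ which is a $W_n$-set, and by Remark~\ref{suff:rem} the induced topology is a $W_n$-topology. There is no real obstacle here: once Remark~\ref{suff:rem} is in hand, the proposition is essentially a translation between the ring-theoretic definition of $W_n$-ring and the topological definition of $W_n$-topology, mediated by the fact that $R$ itself is the canonical bounded neighborhood of $0$ in the topology it induces.
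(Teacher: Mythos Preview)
Your proof is correct and matches the paper's approach exactly: the paper's one-line proof simply states ``The bounded neighborhood $R$ is a $W_n$ set,'' which is precisely the content of your argument unpacked via Remark~\ref{suff:rem}.
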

\begin{proof}
  The bounded neighborhood $R$ is a $W_n$ set.
\end{proof}
\begin{lemma}\label{extend:lem}
  Let $K$ be a field with a $W_n$-topology.  Suppose that $K$ is
  $\omega$-complete as in \cite{PZ}.  Then the topology is induced by
  a $W_n$-ring $R$.  Moreover, given any fixed bounded set $S$, we may
  assume $S \subseteq R$.
\end{lemma}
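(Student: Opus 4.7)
The plan is to combine three facts already in place: Remark~\ref{suff:rem}, Proposition~\ref{gap:prop}, and the observation right after the definition of $W_n$-set (namely, that an integral domain $R$ with $\Frac(R) = K$ is a $W_n$-ring iff $R$ is a $W_n$-set in $K$). The only real conceptual ingredient beyond these is a small monotonicity remark about $W_n$-sets.

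First, I will apply Remark~\ref{suff:rem} to produce a bounded neighborhood $V \ni 0$ which is already a $W_n$-set. To accommodate the given bounded set $S$, I replace $V$ by $W := V \cup S$. This remains a bounded neighborhood of $0$: it contains the neighborhood $V$, and finite unions of bounded sets are bounded by Lemma~2.1 of \cite{PZ}.

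The key small observation is that the $W_n$-set condition is monotone under passing to supersets of $K$: if $V \subseteq V' \subseteq K$ and $V$ is a $W_n$-set, then for any $x_1, \ldots, x_{n+1} \in K$ we have $\sum_{j \ne i} x_j V \subseteq \sum_{j \ne i} x_j V'$, so a witness for $V$ remains a witness for $V'$. Hence $W$ is still a $W_n$-set. I would emphasize that this monotonicity relies on $V$ appearing positively in the defining sum, which is exactly why the superset direction (rather than the subset direction) is the useful one.

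Now I invoke Proposition~\ref{gap:prop} with the bounded neighborhood $W$ (this is where the $\omega$-completeness hypothesis is used, to ensure that the countable union of bounded sets in the iterative construction of the subring remains bounded). The conclusion is that the subring $R \subseteq K$ generated by $W$ is itself a bounded neighborhood of $0$, that $\Frac(R) = K$, and that $\tau = \tau_R$. Since $R \supseteq W$ and $W$ is a $W_n$-set, $R$ is a $W_n$-set in $K$; by the observation after the definition, $R$ is a $W_n$-ring. The chain $S \subseteq W \subseteq R$ gives the moreover clause. I do not expect any serious obstacle: the proof is essentially an assembly of the previous results, with the monotonicity of the $W_n$-set condition as the only non-routine point.
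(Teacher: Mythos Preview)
Your proof is correct and follows essentially the same route as the paper: obtain a bounded neighborhood that is a $W_n$-set, enlarge it by $S$, apply Proposition~\ref{gap:prop} to get a bounded subring $R$ inducing $\tau$, and conclude $R$ is a $W_n$-ring via the superset-monotonicity of $W_n$-sets. The paper's proof is slightly terser but identical in substance; your explicit remark on monotonicity is a welcome clarification.
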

\begin{proof}
  Let $U$ be a bounded neighborhood of 0.  Scaling $U$ by an
  element of $K^\times$, we may assume that $U$ is a $W_n$-set.  By
  Proposition~\ref{gap:prop}, $U \cup S$ is contained in a bounded
  subring $R \subseteq K$, and $R$ induces the topology.  Then $R$ is
  a $W_n$-ring (= $W_n$-set) on $K$, because it contains the $W_n$-set
  $U$.
\end{proof}
\begin{corollary}\label{basic:cor}
  Let $K$ be a field with a ring topology $\tau$.
  \begin{enumerate}
  \item $\tau$ is a $W_n$-topology if and only if $(K,\tau)$ is
    locally equivalent to a field with a topology induced by a
    $W_n$-ring.
  \item If $\tau$ is a $W_n$-topology, then $\tau$ is a field
    topology.
  \item If $\tau$ is a $W_n$-topology, then $\tau$ is a $W_m$ topology
    for $m > n$.
  \item $\tau$ is a $W_1$-topology if and only if $\tau$ is a
    V-topology.
  \end{enumerate}
\end{corollary}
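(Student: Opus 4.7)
The plan is to derive all four parts from the local-class structure of $W_n$-topologies (Remark \ref{local-class:rem}), Lemma \ref{extend:lem}, and the preceding proposition, with part (1) carrying essentially all the content.

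For part (1), the reverse direction is immediate: the preceding proposition says the topology induced by a $W_n$-ring is itself a $W_n$-topology, and by Remark \ref{local-class:rem} ``being a $W_n$-topology'' is preserved under local equivalence, so a topology locally equivalent to such is still a $W_n$-topology. For the forward direction, I would first invoke Theorem 1.1 of \cite{PZ} to replace $(K,\tau)$ with a locally equivalent $\omega$-complete topological field $(K',\tau')$; since the defining axioms of Remark \ref{local-class:rem} are local sentences, $\tau'$ is again a $W_n$-topology, and Lemma \ref{extend:lem} then produces a $W_n$-ring on $K'$ inducing $\tau'$.

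Parts (2), (3), and (4) are quick corollaries of (1). For (2), being a field topology is itself a local condition (as noted in the review subsection, where continuity of inversion is exhibited as a local sentence). Since a topology induced by a $W_n$-ring is a field topology by the preceding proposition, any locally equivalent topology is, so $\tau$ itself is a field topology by (1). For (3), any $W_n$-ring is trivially a $W_m$-ring for $m \ge n$ since $\wt(R) \le n \le m$, hence (1) immediately upgrades $W_n$-topologies to $W_m$-topologies. For (4), the earlier proposition identifying $W_1$-rings with valuation rings, combined with (1), reduces the claim to the fact (Theorem 2.2 of \cite{PZ}) that the V-topologies are exactly, up to local equivalence, the topologies induced by valuation rings.

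I do not expect any serious obstacle. The one point requiring care is the bookkeeping of which transported properties (field topology, $W_n$-topology, V-topology) are expressible as genuine local sentences in the sense of \cite{PZ}, but this is already recorded for $W_n$-topologies in Remark \ref{local-class:rem} and for the other two classes in the review subsection and in \cite{PZ}. Everything else is a routine combination of Lemma \ref{extend:lem}, the preceding proposition, and local equivalence.
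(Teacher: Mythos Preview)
Your proposal is correct and matches the paper's intent: the corollary is stated without proof, as an immediate consequence of Remark~\ref{local-class:rem}, Lemma~\ref{extend:lem}, and the two preceding propositions, exactly along the lines you sketch. The only minor quibble is the citation for part~(4): the identification of V-topologies with topologies locally equivalent to ones induced by valuation rings is in \S3 of \cite{PZ} rather than Theorem~2.2, but this is bookkeeping, not a gap.
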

\begin{definition}
  The \emph{weight} of a ring topology $\tau$ is the minimal $n$ such
  that $\tau$ is a $W_n$-topology, or $\infty$ if no such $n$ exists.
  We write the weight as $\wt(\tau)$.
\end{definition}
Then $\tau$ is a $W_n$-topology if and only if $\wt(\tau) \le n$.

\section{W-topologies on monster models}

\subsection{A criterion for definability}
Let $\Kk$ be some highly saturated field, possibly with extra
structure.  Recall that a topology is \emph{definable} if it has a
(uniformly) definable basis of opens.  In the case of a ring topology,
it suffices to produce a definable neighborhood basis of 0.  In the
case of a locally bounded ring topology, it suffices to produce a
definable bounded neighborhood $U$, as the definable family $\{ a U :
a \in \Kk^\times\}$ is then a definable neighborhood basis of 0.

As in \S6.4 of \cite{prdf4}, say that two subsets $X, Y \subseteq \Kk$
are \emph{co-embeddable} if there are $a, b \in \Kk^\times$ such that
$aX \subseteq Y$ and $bY \subseteq X$.  In a locally bounded ring
topology, the bounded neighborhoods of 0 form a single
co-embeddability class.
\begin{proposition}\label{upgrader:prop}
  Let $\tau$ be a $W_n$-topology on $\Kk$.  Suppose that there is $U
  \subseteq \Kk$ such that
  \begin{itemize}
  \item $U$ is a bounded neighborhood of 0 with respect to $\tau$.
  \item $U$ is $\vee$-definable or type-definable.
  \item $U$ is a subgroup of $(\Kk,+)$.
  \end{itemize}
  Then $U$ is co-embeddable with a definable set, and $\tau$ is a definable topology.
\end{proposition}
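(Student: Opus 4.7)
Both conclusions of the proposition reduce to a single task: producing one definable bounded $\tau$-neighborhood $V$ of $0$. Given such a $V$, the family $\{aV : a \in \Kk^\times\}$ is a uniformly definable neighborhood basis of $0$ (so $\tau$ is definable), and since any two bounded neighborhoods of $0$ in a locally bounded ring topology form a single co-embeddability class (as noted just before the proposition statement), $V$ is automatically co-embeddable with $U$. So the whole plan is to build such a $V$. By the $W_n$-topology axiom I may first rescale $U$ to assume $U$ is itself a $W_n$-set; scaling preserves the subgroup, bounded-neighborhood, and type-/$\vee$-definability hypotheses.

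In the $\vee$-definable case, write $U = \bigcup_i Y_i$ as a directed union of definable sets. Each $Y_i \subseteq U$ is automatically bounded, so my task collapses to finding some $Y_j$ that is a neighborhood of $0$. The approach is to exhibit a ``small'' basic $\tau$-neighborhood $W \subseteq U$ that itself has low logical complexity: I plan to apply Proposition~\ref{gap:prop} to a $\vee$-definable subring generated by $U$ (via the iterative closure $U_0 = U \cup \{0,1\}$, $U_{k+1} = U_k \cup (U_k - U_k) \cup (U_k \cdot U_k)$) to produce a $\vee$-definable $W_n$-ring $R$ inducing $\tau$, and then take $W = cR$ for appropriate $c$. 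The inclusion $W \subseteq \bigcup_i Y_i$ is then a statement to which saturation applies and yields $W \subseteq Y_j$ for some single $j$.

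In the type-definable case $U = \bigcap_i X_i$, I may take the family filtered, symmetric, and containing $0$. Now each $X_i$ automatically is a neighborhood (since $X_i \supseteq U$), and the reversed problem is to find some $X_i$ that is \emph{bounded}. My plan is to propagate the structural properties of $U$ down to individual $X_i$'s by saturation in $\Kk$: the subgroup conditions $U + U = U$ and $-U = U$ give, for each $j$, some $i$ with $X_i + X_i \subseteq X_j$ and $-X_i \subseteq X_j$; and the $W_n$-set condition for $U$ similarly yields, for each $j$, some $i$ for which the defining $W_n$-set axiom holds of $X_i$ modulo error $X_j$. The final step is to extract boundedness of a single $X_i$ from these approximate definable properties, exploiting the locally bounded ring-topology structure of $\tau$.

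The main obstacle is that final step of the type-definable case. An arbitrary $X_i \supseteq U$ may be dramatically larger than $U$ and a priori unbounded, so the $W_n$-set condition must be used essentially to cut $X_i$ down. I expect the argument to run through the local-sentence formulation of $W_n$-topologies in Remark~\ref{local-class:rem}: that local sentence, being first-order, transfers through the saturation of $\Kk$ into a definable witness $c \in \Kk^\times$ that absorbs the approximate-$W_n$-set structure on $X_i$ into $X_j$, which is exactly the absorption characterization of boundedness. Combined with the definable additive subgroup structure furnished by the previous paragraph, this should yield that some rescaling of $X_i$ is the desired definable bounded neighborhood.
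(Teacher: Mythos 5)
There is a genuine gap in both halves of your plan. In the $\vee$-definable case, the saturation step fails: the ring $R$ you build from $U$ via Proposition~\ref{gap:prop} is only $\vee$-definable (each stage $U_k$ of the iterative closure is a union of definable sets, since $U$ itself is), so $W = cR$ is $\vee$-definable, and the inclusion $W \subseteq \bigcup_i Y_i$ of one $\vee$-definable set into a directed union of definable sets does \emph{not} compactness-transfer to $W \subseteq Y_j$ for a single $j$; that step is only valid when the left-hand side is type-definable or definable, which is exactly what you do not yet have. In the type-definable case, the step you yourself flag as the obstacle is the entire content of the proposition: an $X_i \supseteq U$ carries no bound a priori, boundedness is a condition quantified over \emph{all} neighborhoods, and transferring the local sentence of Remark~\ref{local-class:rem} through saturation produces a witness $c$ for the topology $\tau$, not a mechanism that cuts a fixed definable superset $X_i$ of $U$ down to a bounded set. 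So neither case produces the required definable bounded neighborhood. Note also that your sketch never uses the subgroup hypothesis where it is actually needed; the paper's remark that dropping it would require a ``more convoluted argument'' signals that it plays a structural role, not just a bookkeeping one.

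The missing idea in the paper's proof is to manufacture a second bounded neighborhood of the \emph{opposite} definability type to $U$, so that compactness has something to bite on. Concretely: rescale $U$ to a $W_n$-set, take $m$ minimal with $U$ a $W_m$-set, and choose witnesses $b_1,\ldots,b_m$ with $b_i \notin \sum_{j\ne i} b_j U$. Because $U$ is an additive subgroup, each $\sum_{j \ne i} b_j U$ is a clopen subgroup (or $\{0\}$ when $m=1$), so $b_i$ avoids even a small thickening of it. Setting $S = \{x : \exists i,\ b_i \in xU + \sum_{j\ne i} b_j U\}$, one checks that $\Kk \setminus S$ contains a neighborhood $V$ of $0$ and is contained in the bounded set $\sum_j b_j U$, hence is a bounded neighborhood co-embeddable with $U$ --- and it is type-definable exactly when $U$ is $\vee$-definable, and vice versa. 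Only now does compactness apply: co-embeddability between a type-definable and a $\vee$-definable bounded neighborhood yields a single definable set co-embeddable with both, which is the definable bounded neighborhood you were after. Your reduction in the first paragraph (one definable bounded neighborhood suffices) is fine; it is the production of that set which your proposal does not achieve.
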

\begin{proof}
  Rescaling $U$, we may assume that $U$ is a $W_n$-set.  Take $m$
  minimal such that $U$ is a $W_m$-set.  Then there are $b_1, \ldots,
  b_m \in \Kk$ such that for any $i$,
  \begin{equation*}
    b_i \notin \sum_{j \ne i} b_j U.
  \end{equation*}
  \begin{claim} \label{cl0}
    For any $i$, $b_i$ is not in the closure of $\sum_{j \ne i} b_j U$.
  \end{claim}
  \begin{claimproof}
    In fact, $\sum_{j \ne i} b_j U$ is closed.  If $m = 1$, then
    $\sum_{j \ne i} b_j U = \{0\}$, which is closed because the
    topology is Hausdorff.  If $m > 1$, then $\sum_{j \ne i} b_j U$ is
    an additive subgroup of $\Kk$, and a neighborhood of 0.  Therefore
    it is a clopen subgroup.
  \end{claimproof}
  Let $S$ be the set of $x \in \Kk$ such that
  \begin{equation*}
    \bigvee_{i = 1}^m \left(b_i \in x U + \sum_{j \ne i} b_j U\right)
  \end{equation*}
  If $U$ is type-definable (resp. $\vee$-definable), then $S$ is
  type-definable (resp. $\vee$-definable), and $\Kk \setminus S$ is
  $\vee$-definable (resp. type-definable).
  \begin{claim} \label{cl1}
    If $x \notin S$, then $x \in \sum_{j = 1}^m b_j U$.
  \end{claim}
  \begin{claimproof}
    Otherwise, the set $\{b_1,\ldots,b_m,x\}$ witnesses that $U$ is
    not a $W_m$-set.
  \end{claimproof}
  \begin{claim} \label{cl2}
    There is a neighborhood $V \ni 0$ such that $V \cap S =
    \emptyset$.
  \end{claim}
  \begin{claimproof}
    By Claim~\ref{cl0}, each $b_i$ is not in the closure of $\sum_{j
      \ne i} b_j U$.  By Lemma~2.1(d) in \cite{PZ},
    \[ (b_i + V \cdot U) \cap \sum_{j \ne i} b_j U = \emptyset\]
    for small enough $V$.  We can choose $V$ to work across all $b_i$.
    Then if $x \in V \cap S$, there is some $i$ such that
    \begin{align*}
      b_i &\in xU + \sum_{j \ne i} b_j U \\
      \emptyset & \ne (b_i + xU) \cap \sum_{j \ne i} b_j U \subseteq (b_i + V \cdot U) \cap \sum_{j \ne i} b_j U,
    \end{align*}
    contradicting the choice of $V$.
  \end{claimproof}
  Claims~\ref{cl1} and \ref{cl2} imply that
  \begin{equation*}
    V \subseteq \Kk \setminus S \subseteq \sum_{j = 1}^m b_j U.
  \end{equation*}
  By Lemma~2.1 in \cite{prdf}, the right hand side is bounded.  Thus
  $\Kk \setminus S$ is a bounded neighborhood of 0.  So $\Kk \setminus
  S$ is co-embeddable with $U$.  One of $\{U,\Kk \setminus S\}$ is
  type-definable, and othe other is $\vee$-definable.  As in
  Remark~6.17 of \cite{prdf4}, this implies that $U$ is co-embeddable
  with a definable set $D$.  Then $D$ is a definable bounded
  neighborhood of 0, and $\tau$ is a definable topology.
\end{proof}
\begin{remark}
  It may be possible to drop the peculiar assumption that $U$ is an
  additive subgroup in Proposition~\ref{upgrader:prop}, but a more
  convoluted argument would be needed.
\end{remark}

\subsection{$\vee$-definable W-rings}
Let $\Kk$ be some highly saturated field, possibly with extra
structure.
\begin{proposition}\label{vee-to-def}
  Let $R$ be a non-trivial $\vee$-definable $W_n$-ring on
  $\Kk$.  Then $R$ is co-embeddable with a definable set $D$.
  Consequently, the field topology induced by $R$ is definable.
\end{proposition}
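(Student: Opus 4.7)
The plan is to apply Proposition \ref{upgrader:prop} directly, taking $U = R$. Let $\tau$ denote the topology induced by $R$ on $\Kk$. Since $R$ is a non-trivial $W_n$-ring, the earlier proposition in \S3.2 shows that $\tau$ is a $W_n$-topology on $\Kk$. So the ambient topology is of the required form.

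Next I would verify the three bullet hypotheses of Proposition \ref{upgrader:prop} for the set $U := R$. First, $R$ is a bounded neighborhood of $0$ in $\tau_R$: this is immediate from the construction of $\tau_R$ recalled in \S3.1 (the family $\{cR : c \in \Kk^\times\}$ is a neighborhood basis of $0$, so $R = 1 \cdot R$ is itself a neighborhood; boundedness holds because for any $c \in \Kk^\times$, $c \cdot R \subseteq R$ whenever $c \in R$, and more generally one uses that $cR$ ranges over a neighborhood basis). Second, $R$ is $\vee$-definable by assumption. Third, $R$ is certainly a subgroup of $(\Kk,+)$, since it is a subring.

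With these hypotheses verified, Proposition \ref{upgrader:prop} produces a definable set $D$ co-embeddable with $R$, and asserts that $\tau$ is a definable topology. This gives both conclusions of the proposition at once.

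There is essentially no obstacle beyond checking the hypotheses, since all the real work (the extraction of $b_1,\ldots,b_m$ witnessing minimal $W_m$-rank, the closedness claim, and the compactness argument producing the definable co-embedded set) has already been carried out in Proposition \ref{upgrader:prop}. The only mildly subtle point is remembering that $R$ itself plays the dual role of the $\vee$-definable subgroup \emph{and} of the $W_n$-set, which is exactly what makes the hypotheses line up so cleanly.
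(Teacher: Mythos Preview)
Your proposal is correct and is exactly the paper's approach: the paper's proof is the single line ``Proposition~\ref{upgrader:prop} applied to the neighborhood $R$ itself.'' You have simply spelled out the verification of the three hypotheses, which is fine.
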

\begin{proof}
  Proposition~\ref{upgrader:prop} applied to the neighborhood $R$
  itself.
\end{proof}

\begin{proposition}\label{ic:prop}
  Let $R$ be a $\vee$-definable $W_n$-ring on $K$.  Let
  $\tilde{R}$ be the integral closure.  Then $\tilde{R}$ is
  $\vee$-definable.
\end{proposition}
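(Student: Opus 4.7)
The plan is to exhibit $\tilde R$ directly as a small union of definable sets, by ``internalizing'' the existential quantification in the definition of integral closure. Write $R = \bigcup_{i \in I} D_i$ with $|I|$ less than the saturation of $\Kk$ and each $D_i$ definable. An element $x \in K$ lies in $\tilde R$ iff there exist $d \ge 1$ and $r_0, \ldots, r_{d-1} \in R$ with $x^d + r_{d-1} x^{d-1} + \cdots + r_0 = 0$; assigning each coefficient to some piece of the cover, this rewrites as
\begin{equation*}
  \tilde R \;=\; \bigcup_{d \ge 1} \; \bigcup_{(i_0, \ldots, i_{d-1}) \in I^d} E_{d, i_0, \ldots, i_{d-1}},
\end{equation*}
where $E_{d, i_0, \ldots, i_{d-1}}$ denotes the set of $x \in K$ for which some $r_j \in D_{i_j}$ satisfy the displayed monic relation.

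For fixed $d$ and indices, $E_{d, i_0, \ldots, i_{d-1}}$ is the projection to the first coordinate of the definable set
\begin{equation*}
  \left\{(x, r_0, \ldots, r_{d-1}) \in K^{d+1} \;:\; r_j \in D_{i_j} \text{ for each } j, \; x^d + \sum_{j=0}^{d-1} r_j x^j = 0\right\},
\end{equation*}
hence is definable. The indexing set $\bigsqcup_d I^d$ is small (bounded by $\aleph_0 \cdot |I|^{<\omega}$), so $\tilde R$ is a small union of definable sets, i.e., $\vee$-definable. A minor cosmetic simplification is to close the family $\{D_i\}$ under finite unions first, making it directed, so that we can collapse $(i_0,\ldots,i_{d-1})$ to a single index $i \in I$; but this is not needed for the cardinality bound.

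I do not expect any serious obstacle; the argument simply records that ``$x$ is integral over $R$'' is visibly an existential condition on finitely many coefficients from $R$, and existential quantification over a $\vee$-definable set produces a $\vee$-definable set. The structural decomposition of $\tilde R$ in Proposition~\ref{iclose:prop} as an intersection of valuation rings $\Oo_i$ is not needed here, since we have no a priori information about the $\vee$-definability of the individual $\Oo_i$.
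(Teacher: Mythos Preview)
Your proof is correct and follows essentially the same route as the paper: both write $\tilde R$ as the union over degrees $d$ of the sets of $x$ satisfying some monic relation of degree $d$ with coefficients in $R$, and observe that each such set is $\vee$-definable. The paper's proof is simply the one-line version of yours, asserting directly that each $S_d = \{x : \exists y_0,\ldots,y_{d-1} \in R,\; x^d = \sum y_j x^j\}$ is $\vee$-definable, whereas you unpack this by decomposing over which definable piece $D_{i_j}$ each coefficient lies in.
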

\begin{proof}
  $\tilde{R}$ is the union of the $\vee$-definable sets
  \begin{equation*}
    S_n := \{x \in \Kk ~|~ \exists y_0, \ldots, y_n \in R : x^{n+1} =
    y_0 + y_1 x + \cdots + y_n x^n\}. \qedhere
  \end{equation*}
\end{proof}

\begin{proposition}\label{loc:prop}
  Let $R$ be a $\vee$-definable $W_n$-ring on a monster field
  $\Kk$.  Let $\pp$ be one of the maximal ideals of $R$.  Then the
  localization $R_\pp$ is also $\vee$-definable.
\end{proposition}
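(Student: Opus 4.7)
The plan is to reduce matters to showing that the set $R \setminus \pp$ is $\vee$-definable: once we have this, the presentation
\[ R_\pp = \{x \in \Kk : \exists s \in R \setminus \pp,\ sx \in R\} \]
exhibits $R_\pp$ as a $\vee$-definable union of the sets $s^{-1} R$. The difficulty is that $\pp$ is only one of possibly several maximal ideals of $R$, so I will need an auxiliary parameter to single it out $\vee$-definably.

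Since $R$ is a $W_n$-ring, it is semilocal, so enumerate its maximal ideals as $\pp = \pp_1, \pp_2, \ldots, \pp_k$ with $k \le n$ (the case $R = \Kk$ is trivial, so assume $R \ne \Kk$). By prime avoidance, $\pp$ is not contained in $\pp_2 \cup \cdots \cup \pp_k$, so I can fix a parameter $u \in \pp \setminus \bigcup_{i \ne 1} \pp_i$ (taking $u = 0$ if $k = 1$). The key claim is that
\[ s \notin \pp \ \iff\ \exists r_1, r_2 \in R :\ s r_1 + u r_2 = 1 \]
for every $s \in R$. The implication $(\Leftarrow)$ is immediate: if $s \in \pp$ then the right-hand side lies in $\pp$ because $u$ does, contradicting $1 \notin \pp$. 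For $(\Rightarrow)$, if $s \notin \pp$ then the ideal $s R + u R$ is not contained in $\pp$ (because $s \notin \pp$), nor in any $\pp_i$ for $i \ne 1$ (because $u \notin \pp_i$); since these exhaust the maximal ideals of $R$, it follows that $s R + u R = R$, giving the required $r_1, r_2$.

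Plugging the claim into the tautological description of the localization, we obtain
\[ R_\pp = \{x \in \Kk : \exists s, r_1, r_2 \in R :\ s r_1 + u r_2 = 1 \ \wedge\ s x \in R\}, \]
and since $R$ is $\vee$-definable and $\vee$-definability is closed under finite conjunction and existential quantification, the right-hand side is $\vee$-definable. The main obstacle is the selection of the parameter $u$; once prime avoidance (or equivalently CRT on the semilocal quotient $R/\bigcap_i \pp_i$) supplies it, the rest of the verification is formal, and no topological input or additional weight bound is required.
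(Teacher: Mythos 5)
Your proof is correct, and it follows the same overall strategy as the paper's: reduce to showing that $R \setminus \pp$ is $\vee$-definable, using parameters manufactured from the other maximal ideals, and then conclude via the standard presentation of the localization. The difference is in the key claim. The paper fixes, for each subset $S \subseteq \{2,\ldots,n\}$, an element $a_S \in \pp_1$ with $a_S \in \pp_j \iff j \in S$ (Chinese remainder theorem), and characterizes $x \notin \pp_1$ (for $x \in R$) by the existence of some $S$ with $x + a_S \in R^\times$; this needs the whole family of $2^{n-1}$ shifts so that, given $x$, one can choose $a_S$ landing outside exactly the $\pp_i$ containing $x$. You instead fix a single element $u \in \pp \setminus \bigcup_{i\neq 1}\pp_i$ (prime avoidance, or equivalently the paper's $a_\emptyset$) and characterize $s \notin \pp$ by the unit-ideal identity $\exists r_1,r_2 \in R:\ sr_1 + ur_2 = 1$; the verification that $sR + uR$ is contained in no maximal ideal is exactly right, and the $k=1$ and $R=\Kk$ degeneracies are handled correctly. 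Your version is slightly more economical (one parameter and one existential formula rather than a case split over exponentially many shifts, and no appeal to invertibility tests $1/(x+a_S) \in R$), while the paper's version has the mild advantage of testing membership of $x+a_S$ directly in $R^\times$ without quantifying over Bézout coefficients; both characterizations are manifestly preserved under the closure of $\vee$-definable sets under finite intersections and existential quantification, so either yields the proposition.
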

\begin{proof}
  Let $\pp_1, \ldots, \pp_n$ enumerate the maximal ideals of $R$, with
  $\pp_1 = \pp$.  For every subset $S \subseteq \{2, \ldots, n\}$, let
  $a_S$ be an element of $\pp_1$ such that
  \begin{equation*}
    a_S \in \pp_j \iff j \in S.
  \end{equation*}
  The $a_S$ exist by the Chinese remainder theorem.
  \begin{claim}
    For $x \in R$, the following are equivalent:
    \begin{itemize}
    \item $x \notin \pp_1$
    \item There is $S$ such that $1/(x + a_S) \in R$.
    \end{itemize}
  \end{claim}
  \begin{claimproof}
    If $x \in \pp_1$, then $x + a_S \in \pp_1$ for every $S$, so there
    is no $S$ with $x + a_S \in R^\times$.

    Conversely, if $x \notin \pp_1$, then we can find $a_S$ such that
    \begin{equation*}
      x \in \pp_i \iff a_S \notin \pp_i
    \end{equation*}
    for $i = 1, \ldots, n$.  Then $x + a_S \notin \pp_i$ for any $i$,
    so $x + a_S \in R^\times$.
  \end{claimproof}
  As $R$ is $\vee$-definable, it follows that $R \setminus \pp_1$ is
  $\vee$-definable.  Then
  \begin{equation*}
    R_{\pp} = \{ x/s : x \in R \text{ and } s \in R \setminus \pp_1\}
  \end{equation*}
  is $\vee$-definable as well.
\end{proof}

\subsection{V-topological coarsenings}
\begin{theorem}\label{defcourse}
  Let $(K,\tau)$ be a field with a $W_n$-topology.
  \begin{enumerate}
  \item \label{p1} There is at least one V-topological coarsening of $\tau$.
  \item \label{p2} There are at most $n$ such coarsenings.
  \item \label{p3} If $(K,\tau)$ is a definable topology (with respect
    to some structure on $K$), then every V-topological coarsening is
    definable.
  \end{enumerate}
\end{theorem}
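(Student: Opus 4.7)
The plan is to pass to a concrete $W_n$-ring representation of $\tau$ via local equivalence, identify the V-topological coarsenings with the valuation rings in the integral-closure decomposition, and then propagate $\vee$-definability through this pipeline for part (\ref{p3}). Since all three conclusions are preserved under local equivalence (Remark~\ref{local-class:rem}, Corollary~\ref{basic:cor}), I pass to a locally equivalent $\omega$-complete field and use Lemma~\ref{extend:lem} to write $\tau = \tau_R$ for a non-trivial $W_n$-ring $R$, then decompose $\tilde R = \Oo_1 \cap \cdots \cap \Oo_m$ with $m \le n$ pairwise incomparable valuation rings (Remark~\ref{nont:rem}, Proposition~\ref{iclose:prop}). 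I take the $\Oo_i$ to be the localizations $\tilde R_{\tilde \mm_i}$ at the finitely many maximal ideals, using that $\tilde R$ is Pr\"ufer semilocal.

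Each $\Oo_i \supseteq R$ makes $\tau_{\Oo_i}$ a V-topological coarsening of $\tau_R$, giving (\ref{p1}). For the upper bound in (\ref{p2}), let $\tau'$ be any V-topological coarsening of $\tau$. Since $\tau' \subseteq \tau$, the ring $R$ remains $\tau'$-bounded, so applying Lemma~\ref{extend:lem} to $\tau'$ with $S = R$ yields $\tau' = \tau_\Oo$ for some valuation ring $\Oo \supseteq R$ with $\Oo \ne K$. The prime $\pp := \mm_\Oo \cap \tilde R$ is then non-zero and lies in some $\tilde \mm_i$; by domination $\Oo \supseteq \tilde R_\pp \supseteq \tilde R_{\tilde \mm_i} = \Oo_i$. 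The crucial collapse is that comparable non-trivial valuation rings induce the same topology: if $\Oo_i \subseteq \Oo \ne K$ and $c \in \mm_\Oo \setminus \{0\}$, then $c\Oo \subseteq \mm_\Oo \subseteq \Oo_i$, so $\Oo_i$ contains a $\tau_\Oo$-neighborhood of $0$, whence $\tau_\Oo = \tau_{\Oo_i}$. Thus every V-topological coarsening is some $\tau_{\Oo_i}$, and there are at most $m \le n$ of them.

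For (\ref{p3}), work in a sufficiently saturated $(\Kk, +, \cdot, \ldots)$ in which $\tau$ is definable. Pick a definable bounded neighborhood of $0$ and rescale it (allowed by the definition of $W_n$-topology) to a definable $W_n$-set $U$. Running the recursion from the proof of Proposition~\ref{gap:prop} yields a $\vee$-definable subring $R = \bigcup_i U_i$ which is bounded and contains the $W_n$-set $U$, hence is a $\vee$-definable $W_n$-ring inducing $\tau$. Then Proposition~\ref{ic:prop} $\vee$-defines $\tilde R$, Proposition~\ref{loc:prop} $\vee$-defines each $\Oo_i = \tilde R_{\tilde \mm_i}$, and Proposition~\ref{vee-to-def} upgrades each $\Oo_i$ to a definable set whose $\Kk^\times$-rescalings form a definable neighborhood basis of $0$ for $\tau_{\Oo_i}$.

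The most delicate step is the comparability collapse in part (\ref{p2}): the family of valuation overrings of $R$ could a priori be large, but collapsing along comparability reduces it to at most $m \le n$ distinct V-topologies, matching the Dilworth-width bound of Proposition~\ref{iclose:prop}. Once the V-topological coarsenings are pinned down as $\tau_{\tilde R_{\tilde \mm_i}}$, part (\ref{p3}) becomes a routine sequential propagation of $\vee$-definability through the integral-closure, localization, and upgrade machinery already built in Propositions~\ref{ic:prop}, \ref{loc:prop}, and \ref{vee-to-def}.
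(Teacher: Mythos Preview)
Your algebraic pipeline---pass to a $W_n$-ring $R$, take the integral closure $\tilde R$, localize at its maximal ideals to get valuation rings $\Oo_i$---matches the paper, and your comparability-collapse argument is correct. The genuine gap is the opening move: the claim ``all three conclusions are preserved under local equivalence (Remark~\ref{local-class:rem}, Corollary~\ref{basic:cor})'' is not supported by those references and is in fact where most of the real work lies. The conclusions of (\ref{p1})--(\ref{p3}) are statements about \emph{other} topologies on $K$ (the V-topological coarsenings), not local sentences about $(K,\tau)$ alone, so replacing $(K,\tau)$ by a locally equivalent $\omega$-complete field does not automatically carry the coarsenings along.

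This bites in each part. In (\ref{p2}), after writing $\tau=\tau_R$ you apply Lemma~\ref{extend:lem} to an arbitrary coarsening $\tau'$, but that lemma requires $\tau'$ itself to be $\omega$-complete, which you never arranged; the paper fixes this by making the whole tuple $(K,\tau,\sigma_1,\ldots,\sigma_m)$ $\omega$-complete at once (Remark~1.5 of \cite{PZ}). In (\ref{p3}), you show the $\tau_{\Oo_i}$ are definable on a saturated $\Kk$, but never link a \emph{given} coarsening $\sigma$ on the original $K$ to any of them; the paper carries $\sigma$ into the extension by adding a bounded neighborhood $B$ for $\sigma$ to the language before saturating, obtains a valuation ring $R_B\supseteq R_D$, and then uses the integral-closure decomposition to find some $\Oo_i\subseteq R_B$, so that $B^*$ is co-embeddable with something $(K^*,D^*)$-definable. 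In (\ref{p1}), the paper does not use local equivalence at all: it works in an elementary extension $(K^*,D^*)$, shows the coarsening constructed there is \emph{definable} from $D^*$, and transfers its existence back to $(K,D)$ as a disjunction of first-order sentences---a step your argument has no analogue for.
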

\begin{proof}
  ~
  \begin{enumerate}
  \item Let $D \subseteq K$ be a bounded neighborhood of 0 that is a
    $W_n$-set.  Let $(K^*,D^*)$ be a highly saturated elementary
    extension of $(K,D)$.  Then $D^*$ defines a $W_n$-topology on
    $K^*$ that is $\omega$-complete.  Let $R$ be the $\vee$-definable
    subring generated by $D^*$.  By Proposition~\ref{gap:prop}, $R$ is
    a bounded neighborhood of 0, and a $W_n$-ring because it contains
    the $W_n$-set $D^*$.  So $R$ defines the same topology as $D^*$.
    Let $\tilde{R}$ be the integral closure of $R$, and let $\pp$ be a
    maximal ideal of $\tilde{R}$.  Then the localization
    $\tilde{R}_{\pp}$ is a $\vee$-definable valuation ring by
    Propositions~\ref{ic:prop}, \ref{loc:prop}.  Therefore it induces
    a definable V-topology on $(K^*,D^*)$.  This definable V-topology
    is coarser than the topology induced by $R$ or $D^*$, because
    $\tilde{R}_{\pp} \supseteq R$.  The statement ``there is a
    definable V-topology coarser than the topology induced by $D^*$''
    is expressed by a disjunction of first-order sentences, so it
    holds in the elementary substructure $(K,D)$.
  \item Let $\sigma_1, \ldots, \sigma_m$ be distinct V-topological
    coarsenings of $\tau$.  We claim $m \le n$.  By Remark~1.5 in
    \cite{PZ}, we may assume that all the topologies are
    $\omega$-complete.  (Local sentences can assert that $\tau$ is or
    isn't coarser than $\tau'$.)  Then $\tau$ is induced by a
    ring $R$ of weight at most $n$.  Additionally, $R$
    is bounded with respect to $\sigma_i$.  Therefore, there is a
    valuation ring $\Oo_i$ inducing $\sigma_i$ and containing $R$, by
    Lemma~\ref{extend:lem}.  As $i$ varies, the valuation rings
    $\Oo_i$ induce pairwise distinct topologies, so they must be
    pairwise incomparable.  As in the proof of
    Proposition~\ref{iclose:prop}, this implies $m \le n$.
  \item Let $\sigma$ be any V-topological coarsening of $\tau$.  Let
    $D$ and $B$ be bounded neighborhoods of 0 in $\tau$ and $\sigma$
    respectively, with $D$ definable in the given structure.  After
    rescaling, we may assume that $D$ is a $W_n$-set and $B$ is a
    $W_1$-set.  Because $\sigma$ is coarser than $\tau$, the set $B$
    is a neighborhood of 0 in $\tau$, and so there is $c$ such that $c
    D \subseteq B$.  Therefore $D$ is $\tau$-bounded.  Replacing $B$
    with $B \cup D$, we may assume $D \subseteq B$.

    Let $(K^*,D^*,B^*)$ be a highly saturated elementary extension.
    Let $R_D$ and $R_B$ be the subrings generated by $D^*$ and $B^*$.
    Then
    \begin{itemize}
    \item $R_D$ is a $W_n$-subring of $K^*$, $\vee$-definable in the
      reduct $(K^*,D^*$), and co-embeddable with $D^*$.
    \item $R_B$ is a $W_1$-subring (i.e., valuation ring) on $K^*$,
      $\vee$-definable in $(K^*,D^*,B^*)$, and co-embeddable with
      $B^*$.
    \item $R_B \supseteq R_D$.  Therefore $R_B$ contains the integral
      closure $\widetilde{R_D}$.  Writing this integral closure as an
      intersection of valuation rings $\Oo_1 \cap \cdots \cap \Oo_n$,
      there must be some $i$ such that $R_B \supseteq \Oo_i$, by
      Corollary~6.8 in \cite{prdf2}.
    \end{itemize}
    As in Part~\ref{p1}, the valuation ring $\Oo_i$ is
    $\vee$-definable in the reduct $(K^*,D^*)$.  By
    Proposition~\ref{vee-to-def}, there is some $C \subseteq K^*$
    definable in $(K^*,D^*)$, and co-embeddable with $\Oo_i$.  Then
    $C, \Oo_i, R_B, B^*$ are all co-embeddable.  (The inclusion $R_B
    \supseteq \Oo_i$ forces the two valuation rings to induce the same
    topology, hence to be co-embeddable.)

    The statement ``some set definable in $(K^*,D^*)$ is co-embeddable
    with $B^*$'' is expressed by a disjunction of first-order
    sentences, so it holds in the elementary substructure $(K,D,B)$.
    Therefore there is $C_0 \subseteq K$ definable in $(K,D)$ and
    co-embeddable with $B$.  So the V-topology $\sigma$ is definable
    in $(K,D)$. \qedhere
  \end{enumerate}
\end{proof}

While we are here, we note an analogue of Proposition~3.5 in
\cite{hhj-v-top}.
\begin{proposition}\label{hhj-like}
  Let $(\Kk,+,\cdot,\ldots)$ be a sufficiently saturated field,
  possibly with extra structure, and let $\tau$ be a definable
  $W_n$-topology on $\Kk$.  Then $\tau$ is induced by a
  $\vee$-definable, externally definable $W_n$-ring $R$ on $\Kk$.
\end{proposition}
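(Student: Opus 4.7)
The plan is to adapt the construction from Theorem~\ref{defcourse}(\ref{p1}), running it simultaneously inside $\Kk$ and inside an $\aleph_1$-saturated elementary extension $\Kk^* \succ \Kk$, and transferring boundedness back down. First I would fix a definable bounded neighborhood $D \ni 0$ which, after rescaling, is a $W_n$-set, and let $D^*$ denote its interpretation in $\Kk^*$. The ring topology $\tau^*$ on $\Kk^*$ with neighborhood basis $\{cD^* : c \in \Kk^{*\times}\}$ is $\omega$-complete by $\aleph_1$-saturation, so Proposition~\ref{gap:prop} applies in $(\Kk^*,\tau^*)$ and yields a $\vee$-definable subring $R^* = \bigcup_i U_i^*$ built recursively from $D^* \cup \{0,1\}$ in the usual way, which is a bounded $\tau^*$-neighborhood of $0$ with $\Frac(R^*) = \Kk^*$ inducing $\tau^*$. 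Since $R^* \supseteq D^*$ and $D^*$ is a $W_n$-set, $R^*$ is a $W_n$-ring on $\Kk^*$.

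The candidate on $\Kk$ will be $R := R^* \cap \Kk$. Because each $U_i^*$ is the interpretation in $\Kk^*$ of a formula $\phi_i$ over $\Kk$, elementarity gives $U_i^* \cap \Kk = U_i$, where $U_i \subseteq \Kk$ is the analogous definable set; thus $R = \bigcup_i U_i$ is $\vee$-definable in $\Kk$ with internally (hence externally) definable pieces. The set $R$ is a subring containing $D$, so it is a $\tau$-neighborhood of $0$ that is a $W_n$-set. To see $\Frac(R) = \Kk$, I would repeat the argument from Proposition~\ref{gap:prop}: for any $a \in \Kk^\times$, $R \cap aR$ contains the neighborhood $D \cap aD$ and so strictly exceeds $\{0\}$ by non-discreteness, forcing $a \in \Frac(R)$. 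Hence $R$ is a $W_n$-ring on $\Kk$.

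The main obstacle is verifying $\tau$-boundedness of $R$, which is what will ensure $\{cR : c \in \Kk^\times\}$ is a $\tau$-neighborhood basis of $0$ and so that $R$ induces $\tau$. The idea is to pull the boundedness of $R^*$ in $\Kk^*$ down to $\Kk$ by saturation. From $R^* \subseteq c^* D^*$ for some $c^* \in \Kk^{*\times}$ one gets $U_i^* \subseteq c^* D^*$ for every $i \in \Nn$. For each finite $F \subseteq \Nn$ the statement ``$\exists c : \bigwedge_{i \in F} U_i \subseteq cD$'' is a first-order sentence over $\Kk$ that holds in $\Kk^*$ and hence in $\Kk$ by elementarity, so the countable partial type $\{U_i \subseteq cD : i \in \Nn\}$ in the variable $c$ is finitely satisfiable in $\Kk$. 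By $\aleph_1$-saturation of $\Kk$ the type is realized by some $c \in \Kk^\times$, giving $R \subseteq cD$. Combined with the previous step, this exhibits $R$ as a $\vee$-definable $W_n$-ring with externally definable pieces inducing $\tau$, as required.
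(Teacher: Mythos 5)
Your construction of a $\vee$-definable $W_n$-ring inducing $\tau$ does work: running Proposition~\ref{gap:prop} in a saturated elementary extension, taking the trace $R = \bigcup_i U_i$ (the subring of $\Kk$ generated by $D$), and pulling boundedness down by realizing the countable type $\{U_i \subseteq cD : i \in \Nn\}$ are all sound steps, in the same spirit as the elementary-extension arguments of Theorem~\ref{defcourse}(\ref{p1}). The genuine gap is external definability of $R$, which is part of the statement and which you never prove. Observing that the pieces $U_i$ are internally (hence externally) definable is not what is needed: $R$ itself must be the trace on $\Kk$ of a \emph{single} definable set in an elementary extension. A countable increasing union of sets cut out by pairwise different formulas is in general not externally definable (in a stable structure externally definable sets are definable, while such unions need not be), and nothing in your argument produces one formula whose trace is exactly $\bigcup_i U_i$. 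The natural attempt --- realize a type in $c$ asserting $U_i \subseteq cD$ for all $i$ together with $a \notin cD$ for all $a \in \Kk \setminus R$ --- is not obviously finitely satisfiable: a scalar $c$ with $U_i \subseteq cD$ may force $cD$ to swallow points outside $R$.

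The paper avoids this by choosing a different ring. Fix a countable elementary submodel $K \preceq \Kk$ over which $D$ is defined (after passing to a countable reduct) and let $R$ be the union of \emph{all} $K$-definable bounded sets. This $R$ is still a $\vee$-definable $W_n$-subring containing $D$, it is bounded because a countable union of bounded sets in $\Kk$ is bounded, and --- crucially --- it equals the directed union $\bigcup_{c \in K^\times} cD$ of instances of the single formula ``$x \in cD$'' over the small parameter set $K^\times$, since any finite family of $K$-definable bounded sets is contained in $cD$ for some $c \in K^\times$. A small directed union of instances of one fixed formula, each contained in $R$, is externally definable; this is exactly the feature your ring lacks, or at least would require a separate argument that the subring generated by $D$ is externally definable, which is not clear.
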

\begin{proof}
  Let $D$ be a definable bounded neighborhood of 0.  Rescaling, we may
  assume $D$ is a $W_n$-set.  Passing to a reduct, we may assume the
  language is countable.  Let $K$ be a countable elementary
  substructure defining $D$.  Let $R$ be the union of all
  $K$-definable bounded sets.  Then $R$ is a $\vee$-definable subring,
  by Lemma~2.1(b-c) in \cite{PZ}.  Also, $R \supseteq D$, so $R$ is a
  $W_n$-ring.  By the remark at the start of \cite{PZ}, \S 2, a union
  of countably many bounded sets in $\Kk$ is still bounded.  Therefore
  $R$ is bounded.  Then $R$ is a bounded neighborhood of 0, so $R$
  induces the topology.
  \begin{claim}
    If $S_1, \ldots, S_n$ are $K$-definable bounded subsets, then
    there is $c \in K^\times$ such that $S_1 \cup \cdots \cup S_n
    \subseteq c D$.
  \end{claim}
  \begin{claimproof}
    There is $c \in \Kk^\times$ such that $S_1 \cup \cdots \cup S_n
    \subseteq c D$, by Lemma~2.1(e) in \cite{PZ}.  As $S_1, \ldots,
    S_n, D$ are $K$-definable, we can choose $c \in K^\times$.
  \end{claimproof}
  It follows that $R$ can be written as a directed union $\bigcup_{c
    \in K^\times} c D$.  Therefore $R$ is externally definable and
  $\vee$-definable.
\end{proof}

\section{Golden lattices}\label{sec:gold}
\begin{definition}\label{gold:def}
  Let $K$ be a field.  A \emph{golden lattice} on $K$ is a
  collection $\Lambda$ of subgroups of $(K,+)$, satisfying the
  following criteria:
  \begin{description}
  \item[(Lattice)] $\Lambda$ is a bounded sublattice of $\Sub_\Zz(K)$.  In other words
    \begin{itemize}
    \item $0, K \in \Lambda$
    \item If $G, H \in \Lambda$, then $G \cap H, G + H \in \Lambda$.
    \end{itemize}
  \item[(Scaling)] $\Lambda$ is closed under the action of $K^\times$: if $G \in \Lambda$ and $c \in K^\times$, then $c G \in \Lambda$.
  \item[(Rank)] $\Lambda$ has finite cube rank.
  \item[(Intersection)] Let $\Lambda^+ = \Lambda \setminus \{0\}$.  Then $\Lambda^+$
    is closed under finite intersections.
  \item[(Non-degeneracy)] $\Lambda$ is strictly bigger than $\{0,K\}$.
  \end{description}
\end{definition}
\begin{example}
  If $R$ is a $W_n$-ring on $K$, then $\Sub_R(K)$ is a golden lattice on $K$.
\end{example}
\begin{example}
  Proposition~10.1 in \cite{prdf} says that for certain dp-finite
  fields $\Kk \succeq K$, the lattice of type-definable $K$-linear
  subspaces of $\Kk$ is a golden lattice.
\end{example}
For the remainder of \S\ref{sec:gold}, we assume the following:
\begin{assumption}
  $\Lambda$ is a golden lattice of rank $r$ on a field $K$, and
  $\Lambda^+$ is the unbounded sublattice $\Lambda \setminus \{0\}$.
\end{assumption}

\begin{lemma}\label{petal:lem}
  There is $A \in \Lambda^+$ such that $\crk_\Lambda(K/A) = r = \crk(\Lambda)$.
\end{lemma}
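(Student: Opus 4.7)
My plan is to extract a strict $r$-cube in $\Lambda$ and take $A$ to be the meet of its witnesses. By the meet characterization of cube rank in the remark preceding \S3.2, the equality $\crk(\Lambda) = r$ provides elements $G_1, \ldots, G_r \in \Lambda$ satisfying
\begin{equation*}
  G_i \not\supseteq \bigcap_{j \ne i} G_j \qquad \text{for every } i.
\end{equation*}

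The crux is to force every $G_i$ into $\Lambda^+ \setminus \{K\}$, so that $A := \bigcap_{j} G_j$ will be nonzero (by the Intersection axiom) and strictly below each $G_i$. For $r \ge 2$ this is automatic: $G_{i_0} = K$ would violate the cube condition at $i_0$ itself (since $K$ contains anything), and $G_{i_0} = 0$ would force $\bigcap_{j \ne i} G_j = 0$ for every $i \ne i_0$, contradicting $G_i \not\supseteq 0$. The degenerate case $r = 1$ is handled separately by invoking the Non-degeneracy axiom to choose $G_1 \in \Lambda \setminus \{0, K\}$ directly (this $G_1$ is still a 1-cube witness, since $G_1 \ne K$).

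Once each $G_i$ lies in $\Lambda^+ \setminus \{K\}$, the Intersection axiom places $A := \bigcap_{j} G_j \in \Lambda^+$, with $A \le G_i \le K$ for every $i$. Hence each $G_i$ lies in the interval $[A, K] \subseteq \Lambda$, whose lattice operations agree with those of $\Lambda$ by the Lattice axiom. The same witnesses therefore give $\crk([A, K]) \ge r$, and the trivial reverse inequality $\crk([A, K]) \le \crk(\Lambda) = r$ yields $\crk_\Lambda(K/A) = r$, as required.

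The only genuinely subtle step is ensuring $A$ is simultaneously nonzero and a proper subgroup of $K$; this is the content of the second paragraph, and beyond it the argument is a direct manipulation of intervals. The Scaling axiom plays no role.
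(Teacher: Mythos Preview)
Your proof is correct and follows essentially the same strategy as the paper's: extract a rank-$r$ witness in $\Lambda$, use the Intersection axiom to show the bottom of that witness is nonzero, and observe the witness survives in the interval $[A,K]$. The one minor difference is that the paper phrases the witness as ``the base of a strict $r$-cube'' and invokes an external result (Proposition~9.15 of \cite{prdf}) to obtain an independent family above $A$, whereas you work directly with the meet characterization of cube rank stated in the paper itself; this makes your version slightly more self-contained but does not change the underlying argument.
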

\begin{proof}[Proof (cf. Proposition 10.1(7) in \cite{prdf})]
  If $r = 1$, then we can take an $A \in \Lambda$ other than $0$ and
  $K$, by the Non-degeneracy Axiom.  Then $\crk(K/A) \ge 1$, because $A
  < K$.

  Suppose $r > 1$.  Let $A$ be the base of a strict $r$-cube in
  $\Lambda$.  By Proposition~9.15 in \cite{prdf}, there exists a sequence $B_1, B_2, \ldots, B_r$ in
  $\Lambda$ such that each $B_i > A$, and the $B_i$ are
  ``independent'' over $A$, meaning that $(B_1 + \cdots + B_i) \cap
  B_{i+1} = A$ for all $i < r$.  Taking $i = 1$, we see that
  \[ B_1 \cap B_2 = A.\]
  If $A = 0$, this contradicts the Intersection Axiom of golden
  lattices.  Therefore $A > 0$, and $A \in \Lambda^+$.  Then the
  strict $r$-cube shows $\crk(K/A) = r$.
\end{proof}
\begin{definition}
  A finite set $S \subseteq K$ is said to \emph{guard} a group $A \in
  \Lambda$ if for every $B \in \Lambda$,
  \begin{equation*}
    B \supseteq S \implies B \ge A.
  \end{equation*}
\end{definition}
\begin{lemma}\label{guard:lem}
  If $A \in \Lambda$ and $\crk_\Lambda(K/A) = r$, then $A$ is guarded by some
  finite set $S$.
\end{lemma}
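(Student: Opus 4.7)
The plan is to split into cases on $r$. When $r = 1$, the hypothesis $\crk(\Lambda) = 1$ forces $\Lambda$ to be totally ordered, and $\crk(K/A) = 1$ forces $A \lneq K$. I would pick any $s \in K \setminus A$ and set $S := \{s\}$. If $B \in \Lambda$ satisfies $s \in B$, then comparability gives either $B \subseteq A$, which forces $s \in A$ and is a contradiction, or $A \subseteq B$; hence $\{s\}$ guards $A$.

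For $r \ge 2$, I would use the meet-redundancy characterization of cube rank. Since $\crk([A, K]) = r$, there exist $B_1, \ldots, B_r \in [A, K]$ such that $\bigcap_{j \ne i} B_j \ne \bigcap_j B_j$ for every $i$. Set $M := \bigcap_j B_j$ and $C_i := \bigcap_{j \ne i} B_j$; then $M \ge A$, each $C_i > M$, and $C_i \cap B_i = M$, so I can pick $s_i \in C_i \setminus B_i$ and form $S := \{s_1, \ldots, s_r\}$. To see that $S$ guards $A$, suppose $S \subseteq B$ with $B \in \Lambda$; if $A \not\le B$, then also $M \not\le B$ (otherwise $A \le M \le B$), and I claim the $r+1$ elements $\{B, B_1, \ldots, B_r\}$ witness $\crk(\Lambda) \ge r+1$, contradicting $\crk(\Lambda) = r$. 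Indeed, the full meet is $B \cap M \lneq M$; omitting $B$ gives $M \gneq B \cap M$; and omitting any $B_k$ gives $B \cap C_k$, which contains $s_k$ (since $s_k \in B$ and $s_k \in C_k$) while $B \cap M \subseteq B_k$ does not. Every omission strictly enlarges the meet, so $A \le B$.

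The key move is to guard the meet $M$ of the chosen $B_i$'s instead of $A$ itself: since $M \ge A$, any guard for $M$ automatically guards $A$, and this sidesteps the issue of having to produce $B_i$'s whose meet is exactly $A$. Beyond this observation I expect no serious obstacle, since the whole argument is driven directly by the meet characterization of cube rank stated in the excerpt. The case $r = 1$ requires separate treatment only because that characterization is vacuous with a single element to omit.
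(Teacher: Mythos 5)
Your proof is correct, but it takes a genuinely different route from the paper. The paper enlarges $A$ to the base of a strict $r$-cube, invokes Proposition~9.15 of \cite{prdf} to get a sequence $B_1,\ldots,B_r > A$ that is \emph{independent over $A$} (a join-side condition), guards with elements $g_i \in B_i \setminus A$, and then, given $C \supseteq S$, builds an independent sequence inside $[A, C+A]$ and applies additivity of cube rank over a modular interval (Proposition~9.28(1) of \cite{prdf}) to force $\crk_\Lambda(A/(A\cap C)) = 0$, i.e.\ $A \le C$. You instead work entirely on the meet side: from $\crk_\Lambda(K/A)=r$ you extract a meet-irredundant family $B_1,\ldots,B_r \ge A$, guard with $s_i \in \bigl(\bigcap_{j\ne i} B_j\bigr) \setminus B_i$, and for $B \supseteq S$ with $A \not\le B$ you exhibit the $(r+1)$-element irredundant meet family $B, B_1,\ldots,B_r$, contradicting $\crk(\Lambda)=r$ via the meet characterization (Proposition~6.3 of \cite{prdf3}) already quoted in the paper; as you note, you are really guarding $M=\bigcap_i B_i \ge A$, which suffices. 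Your argument is more self-contained -- it needs only the quoted characterization and none of the independence or additivity machinery from \cite{prdf} -- at the cost of a (harmless) case split at $r=1$, where the paper's cube/independence formulation handles all ranks uniformly and stays within the framework it reuses elsewhere in the section (e.g.\ Lemma~\ref{petal:lem}).
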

\begin{proof}[Proof (cf. Proposition~10.4(2) in \cite{prdf})]
  Increasing $A$, we may assume that $A$ is the base of a strict
  $r$-cube in $\Lambda$.  By Proposition~9.15 in \cite{prdf}, there
  are $B_1, B_2, \ldots, B_r \in \Lambda$ such that each $B_i > A$,
  and the $B_i$ are independent over $A$, in the sense that
  \begin{equation*}
    (B_1 + \cdots + B_{i-1}) \cap B_i = A
  \end{equation*}
  for $1 \le i < r$.  Take $g_i \in B_i \setminus A$, and let $S =
  \{g_1,\ldots,g_r\}$.  We claim $S$ guards $A$.  Suppose $C \in
  \Lambda$ and $C \supseteq S$.  Let $B'_i = (C + A) \cap B_i$.  Then
  $B'_i \le B_i$, $B'_i \le C + A$, and $A \le B'_i$.  Moreover, $g_i
  \in S \subseteq C \subseteq C+A$, and $g_i \in B_i$, so $g_i \in
  B'_i$.  As $g_i \notin A$, it follows that $A < B'_i$.  For $1 \le i
  < r$, we have
  \begin{align*}
    A &\le (B'_1 + \cdots + B'_{i-1}) \cap B'_i \le (B_1 + \cdots + B_{i-1}) \cap B_i = A \\
    A &= (B'_1 + \cdots + B'_{i-1}) \cap B'_i.
  \end{align*}
  So the $B'_i$ are an independent sequence in the interval $[A,C+A]
  \subseteq \Lambda$.  As the $B'_i$ are strictly greater than $A$, it
  follows that
  \[ \crk_\Lambda((C+A)/A) \ge r.\]
  On the other hand, by properties of cube rank (Proposition~9.28(1) in \cite{prdf}) we
  have
  \begin{align*}
    r &= \crk(\Lambda) \ge \crk_\Lambda((C+A)/(C \cap A)) =
    \crk_\Lambda((C+A)/A) + \crk_\Lambda(A/(A \cap C)) \\ & \ge r + \crk_\Lambda(A/(A \cap C)).
  \end{align*}
  Therefore $\crk_\Lambda(A/(A \cap C)) = 0$, implying $A = A \cap C$.  Thus $C
  \supseteq A$.  This shows that $A$ is guarded by $S$.
\end{proof}
\begin{lemma}\label{stuffsack:lem}
  If $S$ is a finite set and $A \in \Lambda^+$, there is $c \in
  K^\times$ such that $c S \subseteq A$.
\end{lemma}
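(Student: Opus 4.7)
The plan is to reduce the multi-element case to a single rescaling problem by intersecting translates of $A$, using only the Scaling and Intersection axioms of a golden lattice.

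First I would observe that zero elements of $S$ can be ignored: if $x = 0$ then $cx = 0 \in A$ automatically, for any $c$, because $A$ is a subgroup of $(K,+)$. So I may assume $S = \{x_1, \ldots, x_k\}$ with every $x_i \in K^\times$.

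Next, for each $i$ I would form $A_i := x_i^{-1} A$. By the Scaling axiom $A_i \in \Lambda$, and since $A \ne 0$ we have $A_i \ne 0$, so $A_i \in \Lambda^+$. The statement we want, ``$cS \subseteq A$,'' is equivalent to ``$cx_i \in A$ for each $i$,'' which is in turn equivalent to ``$c \in A_i$ for each $i$,'' i.e., $c \in \bigcap_{i=1}^{k} A_i$. By the Intersection axiom $\Lambda^+$ is closed under finite intersections, so $\bigcap_{i=1}^k A_i \in \Lambda^+$; in particular it is nonzero. Choose any nonzero $c$ in this intersection; then $cS \subseteq A$, as desired.

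There is no real obstacle here: the lemma is a direct consequence of the Scaling and Intersection axioms, together with the fact that members of $\Lambda$ are subgroups (which is what lets us discard the zero elements of $S$ and rescale by inverses). The only mildly delicate point is to note that Intersection gives a \emph{nonzero} common element, not merely a common element in $\Lambda$ — this is exactly why the axiom is stated for $\Lambda^+$ rather than $\Lambda$.
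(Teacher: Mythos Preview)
Your proof is correct and follows essentially the same approach as the paper: form the intersection $\bigcap_i x_i^{-1} A \in \Lambda^+$ via the Scaling and Intersection axioms, then pick a nonzero element. Your only addition is the explicit handling of zero elements of $S$, which the paper glosses over.
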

\begin{proof}
  Let $S = \{b_1,\ldots,b_n\}$.  Each $b_i^{-1}A$ is an element of
  $\Lambda^+$, by the Scaling Axiom.  By the Intersection Axiom,
  $\bigcap_{i = 1}^n b_i^{-1} A$ is in $\Lambda^+$, hence non-zero.
  Take non-zero $c \in \bigcap_{i = 1}^n b_i^{-1} A$.  Then $cb_i \in
  A$ for all $i$.  Equivalently, $cS \subseteq A$.
\end{proof}

\begin{theorem}\label{gold:thm}
  If $\Lambda$ is a golden lattice on $K$, then $\Lambda^+ = \Lambda
  \setminus \{0\}$ is a neighborhood basis of a $W$-topology on $K$.
  If $\Lambda$ has rank $r$, then the topology is a $W_r$-topology.
\end{theorem}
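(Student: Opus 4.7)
The plan is to verify that $\Lambda^+$ is the neighborhood basis of $0$ in a locally bounded Hausdorff non-discrete ring topology, and then exhibit a specific bounded neighborhood $A \in \Lambda^+$ that is a $W_r$-set; Remark~\ref{suff:rem} then yields that the topology is a $W_r$-topology.

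First I would check the group-topology axioms on $(K,+)$. The set $\Lambda^+$ is a filter under $\subseteq$ by Intersection, each of its elements is an additive subgroup of $K$ (so addition and negation are automatically continuous), and the topology is non-discrete because $0 \notin \Lambda^+$. For Hausdorffness, pick any $A \in \Lambda^+$ with $A \ne K$ (such $A$ exists by Lemma~\ref{petal:lem}, since $\crk_\Lambda(K/A) = r \ge 1$); given a nonzero $x \in K$, either $x \notin A$ or, picking $y \in K \setminus A$, the rescaling $(x/y)A \in \Lambda^+$ avoids $x$. Continuity of multiplication by any fixed scalar is immediate from Scaling, so the only nontrivial requirement is continuity of multiplication at $(0,0)$: for each $U \in \Lambda^+$, find $V \in \Lambda^+$ with $V \cdot V \subseteq U$. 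This is the main obstacle.

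To handle it, fix $A \in \Lambda^+$ with $\crk_\Lambda(K/A) = r$ (Lemma~\ref{petal:lem}) and a finite guard $S$ for $A$ (Lemma~\ref{guard:lem}); after replacing $A$ with $a_0^{-1}A$ for some nonzero $a_0 \in A$ and updating the guard, I may assume $1 \in A$. Given $U \in \Lambda^+$, Lemma~\ref{stuffsack:lem} yields $c \in K^\times$ with $cS \subseteq U$; then $S \subseteq c^{-1}U \in \Lambda$, and guarding gives $A \subseteq c^{-1}U$, so $V_0 := cA$ lies in $\Lambda^+$ and inside $U$. Set $V_1 := \bigcap_{s \in S} s^{-1}V_0 \in \Lambda^+$ (by Scaling and Intersection). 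For each nonzero $v \in V_1$ one has $vS \subseteq V_0$, i.e.\ $S \subseteq v^{-1}V_0 \in \Lambda$, and guarding again gives $A \subseteq v^{-1}V_0$, so $vA \subseteq V_0$; hence $V_1 \cdot A \subseteq V_0$. Taking $V := V_1 \cap A \in \Lambda^+$, I conclude $V \cdot V \subseteq V_1 \cdot A \subseteq V_0 \subseteq U$. The clever point is that guarding converts the apparently ``pointwise'' condition $vA \subseteq V_0$ into the finite condition $vS \subseteq V_0$, which can be engineered inside $\Lambda^+$ via Intersection.

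To finish, $A$ itself is a bounded neighborhood of $0$, because the same Lemma~\ref{stuffsack:lem}-plus-guarding step produces, for each $U \in \Lambda^+$, some $c$ with $cA \subseteq U$; and $A$ is a $W_r$-set, because given $x_1,\ldots,x_{r+1} \in K$, the join-characterization of $\crk(\Lambda) \le r$ applied to the $r+1$ lattice elements $x_j A$ yields some $i$ with $x_i A \subseteq \sum_{j \ne i} x_j A$, after which $1 \in A$ forces $x_i = x_i \cdot 1 \in \sum_{j \ne i} x_j A$. Remark~\ref{suff:rem} now certifies that the topology is a $W_r$-topology.
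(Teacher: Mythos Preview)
Your proof is correct and follows essentially the same approach as the paper: guarding via Lemma~\ref{guard:lem} is used to convert the multiplication-continuity condition into a finite intersection inside $\Lambda^+$, local boundedness comes from Lemma~\ref{stuffsack:lem} plus guarding, and the $W_r$-set condition follows from the join characterization of cube rank after arranging $1 \in A$. The only cosmetic difference is that you fix a single $A$ and its guard once and for all, and in the multiplication step first shrink to $V_0 = cA \subseteq U$ before applying the guarding trick, whereas the paper applies guarding directly against $U$; this extra step is harmless.
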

\begin{proof}
  We check the relevant local sentences (copied straight out of
  \cite{PZ}).  The variables $U,V,W$ will range over $\Lambda^+$.

  First, $\Lambda^+$ is a filter base, by the Intersection Axiom:
  \[ \forall U ~ \forall V ~ \exists W : W \subseteq U \cap V.\]
  
  Second, we verify non-discreteness:
  \begin{equation}
    \forall U : \{0\} \subsetneq U. \label{eq:nondisco}
  \end{equation}
  This holds by definition of $\Lambda^+$.

  Third, we check Hausdorffness:
  \begin{equation}
    \forall x \in K^\times ~ \exists V : x \notin V. \label{eq:haus}
  \end{equation}
  By the Non-degeneracy Axiom, there is some $V_0 \in
  \Lambda$ such that $0 < V_0 < K$.  Then $V_0 \in \Lambda^+$, and
  there is some $x_0 \in K^\times$ such that $x_0 \notin V_0$.  For
  any $x \in K^\times$, we have
  \[ x = (xx_0^{-1})x_0 \notin
  (xx_0^{-1})V_0 \in \Lambda^+, \]
  by the Scaling Axiom.
  
  Next, we check continuity of addition and subtraction:
  \begin{equation}
    \forall U ~ \exists V : V - V \subseteq U. \label{subt}
  \end{equation}
  Indeed, we can take $V = U$, since the elements of $\Lambda$ are
  subgroups.

  Equations (\ref{eq:nondisco}-\ref{subt}) ensure that we have a
  non-discrete Hausdorff group topology on $(K,+)$.

  Next, we check continuity of multiplication by a constant:
  \begin{equation}
    \forall U ~ \forall x ~ \exists V : x V \subseteq U. \label{mult-easy}
  \end{equation}
  If $x = 0$, we can take any $V$.  Otherwise, we take $V = x^{-1} U$,
  using the Scaling Axiom.

  Next, we check continuity of multiplication near $(0,0)$:
  \begin{equation}
    \forall U ~ \exists V : V \cdot V \subseteq U. \label{mult-hard}
  \end{equation}
  This will take a little work.  By Lemma~\ref{petal:lem}, there is
  $V_1 \in \Lambda^+$ such that $\crk_\Lambda(K/V_1) = r$.  By
  Lemma~\ref{guard:lem}, there is a finite set $S = \{a_1,\ldots,a_n\}
  \subseteq K$ guarding $V_1$.  Let $V_2 = \bigcap_{i = 1}^n
  a_i^{-1}U$, and $V = V_1 \cap V_2$.  By the Scaling and Intersection
  Axioms, $V_2$ and $V$ are in $\Lambda^+$.  Suppose $x, y \in V$.
  Then $x \in V_2$ and $y \in V_1$.  For $i = 1, \ldots, n$, we have
  \begin{align*}
    x &\in a_i^{-1}U \\
    a_i &\in x^{-1}U.
  \end{align*}
  Thus $S \subseteq x^{-1}U$.  Now $x^{-1}U \in \Lambda$ by the Scaling Axiom, so
  \begin{equation*}
    S \subseteq x^{-1}U \implies V_1 \subseteq x^{-1}U \implies y \in
    x^{-1}U \implies xy \in U,
  \end{equation*}
  as $S$ guards $V_1$.  As $x, y$ were arbitrary elements of $V$, we
  have shown (\ref{mult-hard}).

  Equations (\ref{mult-easy}-\ref{mult-hard}) now show that
  $\Lambda^+$ defines a ring topology on $K$.

  Next we check that the ring topology is locally
  bounded\footnote{This differs from the local sentence given between
    Lemma~2.1 and Theorem~2.2 in \cite{PZ}.  But it is equivalent,
    by Lemma 2.1(d).}
  \begin{equation}
    \exists U \forall V \exists c \in K^\times : c U \subseteq V. \label{loc-bounded}
  \end{equation}
  To verify this, use Lemma~\ref{petal:lem} to find $U \in \Lambda^+$
  such that $\crk_\Lambda(K/U) = r$.  By Lemma~\ref{guard:lem}, there
  is a finite set $S \subseteq K$ that guards $U$.  Given any $V \in
  \Lambda^+$, Lemma~\ref{stuffsack:lem} gives $c \in K^\times$ such
  that $cS \subseteq V$.  Then
  \begin{equation*}
    cS \subseteq V \iff S \subseteq c^{-1}V \implies U \subseteq c^{-1}V \iff cU \subseteq V.
  \end{equation*}
  This proves (\ref{loc-bounded}).

  Lastly, we must verify the $W_r$-condition.  Take $U$ as in the
  proof of (\ref{loc-bounded}), with $U \in \Lambda^+$, and
  $\crk_\Lambda(K/U) = r$.  Then $U$ is a bounded neighborhood of 0.
  After rescaling, we may assume $1 \in U$.  By Remark~\ref{suff:rem},
  it suffices to show that $U$ is a $W_r$-set.  Let $a_1, \ldots,
  a_{r+1}$ be elements of $K$.  Because $\Lambda$ has rank $r$, there
  is some $i$ such that
  \begin{equation*}
    a_1U + \cdots + a_{r+1}U = a_1U + \cdots + a_{i-1}U + a_{i+1}U + \cdots + a_{r+1}U.
  \end{equation*}
  (See Proposition~6.3 in \cite{prdf3}.)  Then
  \begin{equation*}
    a_i = a_i 1 \in a_i U \subseteq \sum_{j = 1}^{r+1} a_j U = \sum_{j \ne i} a_j U.
  \end{equation*}
  As the $a_i$'s were arbitrary, we have shown that $U$ is a $W_r$-set.
\end{proof}

From the proof of Theorem~\ref{gold:thm}, we extract the following useful fact:
\begin{lemma}\label{pedbound:lem}
  Let $\Lambda$ be a golden lattice on $K$, of rank $r$.  If $A \in
  \Lambda$ and $\crk_\Lambda(K/A) = r$, then $A$ is bounded in the
  topology induced by $\Lambda$.
\end{lemma}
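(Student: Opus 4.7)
The plan is to extract the argument that already appears inside the proof of Theorem~\ref{gold:thm} when verifying local boundedness (equation (\ref{loc-bounded})), which uses nothing about the specific $U$ chosen there beyond the properties $U\in\Lambda^+$ and $\crk_\Lambda(K/U)=r$. Since the hypothesis of the lemma gives exactly these properties for $A$, the same reasoning applies verbatim.

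More concretely, I would proceed as follows. Because $\Lambda^+$ is a neighborhood basis of $0$ in the topology induced by $\Lambda$, to show $A$ is bounded it suffices to prove that for every $V\in\Lambda^+$ there exists $c\in K^\times$ with $cA\subseteq V$. Apply Lemma~\ref{guard:lem} to $A$ (its hypothesis is exactly $\crk_\Lambda(K/A)=r$) to obtain a finite set $S\subseteq K$ that guards $A$. Given any $V\in\Lambda^+$, apply Lemma~\ref{stuffsack:lem} to $S$ and $V$ to obtain $c\in K^\times$ with $cS\subseteq V$, equivalently $S\subseteq c^{-1}V$. By the Scaling axiom, $c^{-1}V\in\Lambda$, so the guarding property forces $A\le c^{-1}V$, i.e., $cA\subseteq V$.

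There is no real obstacle here, since every ingredient has already been established: Lemmas~\ref{guard:lem} and \ref{stuffsack:lem} give the two main steps, and the Scaling axiom ensures $c^{-1}V$ stays in $\Lambda$ so that ``$S$ guards $A$'' can be applied. The only thing worth noting is that it is enough to check boundedness against $V\in\Lambda^+$ rather than against an arbitrary neighborhood of $0$, which is immediate from the fact (proved in Theorem~\ref{gold:thm}) that $\Lambda^+$ is a neighborhood basis. So the proof is essentially a one-paragraph repackaging of part of the proof of (\ref{loc-bounded}).
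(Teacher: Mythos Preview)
Your proposal is correct and matches the paper exactly: the paper does not give a separate proof but simply states that this fact is ``extracted from the proof of Theorem~\ref{gold:thm},'' namely the verification of (\ref{loc-bounded}), which is precisely the argument you wrote out. The only cosmetic difference is that you made explicit the use of the Scaling axiom to ensure $c^{-1}V\in\Lambda$, which the paper left implicit.
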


\section{Dp-finite fields}
Let $T$ be a complete, dp-finite, unstable theory of fields (possibly
with extra structure).

\begin{proposition}\label{win:prop}
  Let $\Kk$ be a highly saturated monster model of $T$.
  \begin{itemize}
  \item There is a small field $K \preceq \Kk$ such that the group
    $J_K$ of $K$-infinitesimals is co-embeddable with a definable set
    $D$.
  \item The canonical topology on $\Kk$ is a definable $W_n$-topology.
  \end{itemize}
\end{proposition}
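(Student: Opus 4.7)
The plan is to combine the golden lattice apparatus of Section~\ref{sec:gold} with Proposition~\ref{upgrader:prop}, applied to the infinitesimals themselves. Write $n = \dpr(\Kk)$, and choose a small $K \preceq \Kk$. By Proposition~10.1 of \cite{prdf}, the collection $\Lambda$ of type-definable $K$-linear subspaces of $\Kk$ is a golden lattice of rank $n$. Theorem~\ref{gold:thm} then shows that $\Lambda^+ := \Lambda \setminus \{0\}$ is a neighborhood basis for a $W_n$-topology $\tau_\Lambda$ on $\Kk$. The next step is to match $\tau_\Lambda$ with the canonical topology: using the fact recalled after Theorem~\ref{thm2} (its neighborhood basis consists of differences $D - D$ for $K$-definable $D$ of maximal dp-rank), one checks that each $D - D$ is co-embeddable with an element of $\Lambda^+$ and vice versa. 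This identification shows that the canonical topology is itself a $W_n$-topology.

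Next, define $J_K := \bigcap \Lambda^+$, the intersection of all non-trivial type-definable $K$-subspaces of $\Kk$. It is itself a type-definable $K$-linear subspace, and thus lies in $\Lambda$. Because the canonical topology is Hausdorff and non-discrete, every finite intersection of basic neighborhoods of $0$ contains a non-zero element; saturation of $\Kk$ over $K$ then realizes the partial type asserting $x \ne 0$ together with membership in every element of $\Lambda^+$, so $J_K \ne 0$. Hence $J_K \in \Lambda^+$, and in particular $J_K$ is simultaneously a bounded basic neighborhood of $0$, a type-definable set, and an additive subgroup of $(\Kk,+)$. Proposition~\ref{upgrader:prop} applies directly with $U = J_K$, producing a definable $D$ co-embeddable with $J_K$; this is exactly the first bullet. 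The second bullet follows at once, since $D$ is then a definable bounded neighborhood of $0$, so $\{aD : a \in \Kk^\times\}$ is a definable neighborhood basis for the canonical topology.

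The main obstacle I anticipate is the precise identification of $\tau_\Lambda$ with the canonical topology, since the two are built from superficially different neighborhood bases (type-definable $K$-subspaces versus definable differences $D - D$); the comparison draws on the characterizations of the canonical topology from \cite{prdf,prdf2}. A secondary subtlety is confirming that $J_K$, a priori only type-definable, actually sits inside $\Lambda^+$ and is therefore a basic neighborhood rather than a proper infinitesimal type-definable subgroup — this is where non-discreteness plus saturation is indispensable. Once these two points are in place, the argument reduces to the clean application of Proposition~\ref{upgrader:prop} sketched above.
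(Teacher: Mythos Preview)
Your overall strategy---golden lattice, Theorem~\ref{gold:thm}, then Proposition~\ref{upgrader:prop} applied to an infinitesimal subgroup---matches the paper's, but two load-bearing steps are unjustified as written.

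First, Proposition~10.1 of \cite{prdf} does not apply to an arbitrary small $K \preceq \Kk$. The paper takes $k_0$ to be a \emph{magic subfield} (Definition~8.3 of \cite{prdf2}: every type-definable $k_0$-linear subspace $G$ satisfies $G = G^{00}$), and this hypothesis is what forces the Intersection Axiom for $\Lambda$. Without it there is no reason two non-zero type-definable $K$-linear subspaces cannot meet trivially, and then $\Lambda$ is not golden. Second, $J_K$ already has a fixed meaning---the group of $K$-infinitesimals, i.e., the filtered intersection of the $K$-definable canonical basic neighborhoods---so you are not free to redefine it as $\bigcap \Lambda^+$. These are a priori different: $\Lambda^+$ consists of \emph{type-definable} $k_0$-linear subspaces, whereas $J_K$ is cut out by \emph{definable} sets over a possibly different field $K$. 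The paper in fact uses two small fields: the magic $k_0$ to build $\Lambda$, and then a further $K$ (supplied by Proposition~10.1(4) of \cite{prdf}) for which the genuine $J_K$ lands in $\Lambda^+$ and inside a pre-chosen bounded $V \in \Lambda^+$. That second step, not your saturation argument, is what places $J_K$ as a bounded neighborhood so that Proposition~\ref{upgrader:prop} applies to the correct object.

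Your acknowledged obstacle---identifying $\tau_\Lambda$ with the canonical topology---is also handled differently in the paper and is not circumvented by the sketch you give. Rather than comparing neighborhood bases abstractly, the proof first obtains $J_K \subseteq D \subseteq eJ_K$ with $D$ definable, shrinks $D$ to a $K$-definable canonical basic neighborhood (using that $J_K$ is the filtered intersection of such), and then uses $K \preceq \Kk$ to transfer ``every $K$-definable canonical basic neighborhood contains some $cD$'' from $K$ up to $\Kk$. This step genuinely relies on $J_K$ being the standard infinitesimal group rather than $\bigcap \Lambda^+$.
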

\begin{proof}
  Let $k_0 \preceq \Kk$ be a \emph{magic subfield} (Definition~8.3 in
  \cite{prdf2}), meaning that for every type-definable $k_0$-linear
  subspace $G \subseteq \Kk$, we have $G = G^{00}$.  Let $\Lambda$ be
  the lattice of type-definable $k_0$-linear subspaces of $\Kk$.  By
  Proposition~10.1(1,2,6,7) in \cite{prdf}, this lattice is a golden lattice
  (Definition~\ref{gold:def}).  Let $\Lambda^+$ be the non-zero
  elements of $\Lambda$.  Let $n$ be $\crk(\Lambda) \le \dpr(\Kk)$.
  By Theorem~\ref{gold:thm}, $\Lambda^+$ is a neighborhood basis of 0
  for some $W_r$-topology $\tau$ on $\Kk$.  Take $V \in \Lambda^+$ a
  bounded neighborhood of 0.  By Proposition~10.1(4) in \cite{prdf}, there is a small
  field $K$ such that $J_K \in \Lambda^+$ and $J_K \subseteq V$.  Then
  $J_K$ is a bounded neighborhood of 0.  By
  Proposition~\ref{upgrader:prop}, $J_K$ is co-embeddable with a
  definable set $D$, and the $W_r$-topology $\tau$ is defined by $D$.
  It remains to show that $\tau$ is the canonical topology.
  
  After rescaling $D$, we may assume
  \begin{equation*}
    J_K \subseteq D \subseteq e J_K
  \end{equation*}
  for some $e \in K^\times$.  Now $J_K$ is a filtered intersection of
  the $K$-definable canonical basic neighborhoods.  Shrinking $D$, we may assume that
  $D$ is a $K$-definable canonical basic neighborhood.
  \begin{claim}\label{cl7}
    If $U$ is a $K$-definable canonical basic neighborhood, then there
    is $c \in K^\times$ such that $c D \subseteq U$.
  \end{claim}
  \begin{claimproof}
    The sets $U$ and $D$ are definable over $K$, and $K \preceq \Kk$,
    so it suffices to find $c \in \Kk^\times$.  Take $c = e^{-1}$:
    \begin{equation*}
      e^{-1} D \subseteq J_K \subseteq U.
      \qedhere
    \end{equation*}
  \end{claimproof}
  Claim~\ref{cl7} is a conjunction of first-order sentences, so it
  transfers from $K$ to $\Kk$.  Therefore, if $U$ is any
  $\Kk$-definable canonical basic neighborhood, then there is $c \in
  \Kk^\times$ such that $c D \subseteq U$.  It follows that $D$
  defines the canonical topology on $\Kk$, which must agree with the
  definable $W_n$-topology $\tau$.
\end{proof}

\begin{theorem}
  If $K$ is an unstable field with $\dpr(K) = n$, then the canonical
  topology on $K$ is a definable $W_n$-topology.
\end{theorem}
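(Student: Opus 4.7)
The plan is to deduce the theorem from Proposition~\ref{win:prop} by an elementary-equivalence transfer argument. Set $T = \mathrm{Th}(K)$ and let $\Kk \models T$ be a highly saturated monster model. By Proposition~\ref{win:prop} applied to $\Kk$, the canonical topology $\tau_{\Kk}$ on $\Kk$ is a definable $W_m$-topology for some $m \le \dpr(\Kk) = n$. Concretely, there are a formula $\phi(x;\bar y)$ and parameters $\bar a \in \Kk$ such that $D := \phi(\Kk;\bar a)$ is a bounded $W_m$-subset of $\Kk$ and the family $\{cD : c \in \Kk^\times\}$ is a neighborhood basis of $0$ for $\tau_{\Kk}$.

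The key observation is that all the relevant conditions on the tuple $\bar a$ are first-order expressible (as a scheme of sentences) uniformly in models of $T$. That $\phi(\Kk;\bar a)$ is a $W_m$-set is a single first-order sentence in $\bar a$. That its rescalings form a basis for the canonical topology amounts to a scheme: for each formula $\psi(x;\bar z)$ (playing the role of defining a set of full dp-rank), one asserts that if $\psi(\Kk;\bar c)$ has dp-rank $n$, then $cD \subseteq \psi(\Kk;\bar c) - \psi(\Kk;\bar c)$ for some $c \in \Kk^\times$, and symmetrically that some $c(\psi(\Kk;\bar c) - \psi(\Kk;\bar c))$ is contained in $D$. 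These are first-order in $\bar a$ and $\bar c$, using that ``full dp-rank'' on a definable family is first-order expressible in the dp-finite theory $T$ (dp-rank $\ge n$ is witnessed by an ict-pattern, and dp-rank $\le n$ follows from NIP-style uniformity). Thus the overall assertion ``there exists $\bar a$ such that $\phi(\cdot;\bar a)$ is a $W_m$-set defining the canonical topology'' is a disjunction of first-order sentences in $T$, and this assertion holds in $\Kk$ by Proposition~\ref{win:prop}.

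Since $K \equiv \Kk$, the same scheme holds in $K$, producing parameters $\bar a' \in K$ witnessing that the canonical topology on $K$ is a definable $W_m$-topology. By Corollary~\ref{basic:cor}(3), any $W_m$-topology with $m \le n$ is also a $W_n$-topology, so we conclude that the canonical topology on $K$ is a definable $W_n$-topology, as desired. The main potential obstacle is rigorously justifying that ``defines the canonical topology'' is uniformly expressible across models of $T$; this reduces to the first-order expressibility of full dp-rank on definable families in dp-finite theories, which is standard but deserves a careful statement.
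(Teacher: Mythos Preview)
Your overall strategy---prove the result in a monster via Proposition~\ref{win:prop}, then transfer to an arbitrary model by elementary equivalence---is the right idea and is essentially what the paper defers to in citing \cite{prdf4}, Theorem~6.27.

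However, the justification you give for the transfer has a real gap. You claim that ``full dp-rank on definable families'' is first-order expressible and that this is ``standard.'' It is not. The condition $\dpr(\psi(\Kk;\bar c)) \ge n$ is witnessed by an ict-pattern of depth $n$, which is an \emph{infinite} configuration; this makes the condition type-definable in $\bar c$, not definable. Dually, $\dpr \le n$ is co-type-definable. There is no general theorem saying these collapse to a single formula in dp-finite theories. (Even in a dp-minimal structure like $(\Zz,+,<)$, ``$\phi(x;\bar c)$ has dp-rank $1$'' is equivalent to ``$\phi(x;\bar c)$ is infinite,'' which is not definable.) So your scheme ``for all $\bar c$, if $\psi(\cdot;\bar c)$ has full rank then $\exists d:\, dD \subseteq \psi(\cdot;\bar c)-\psi(\cdot;\bar c)$'' is not obviously a conjunction of first-order sentences.

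The argument in \cite{prdf4} avoids this by a different route. One workable approach: the canonical topology on $\Kk$ is $\Aut(\Kk)$-invariant, so if $D=\phi(\Kk;\bar a)$ is a bounded neighborhood, then so is $\phi(\Kk;\bar a')$ for every $\bar a'\equiv \bar a$; compactness then gives a single formula $\theta\in\tp(\bar a)$ such that all $\phi(\Kk;\bar a')$ with $\bar a'\models\theta$ are mutually co-embeddable. Choosing $\bar a'\in K$ with $\bar a'\models\theta$ and using that (from the proof of Proposition~\ref{win:prop}) $D$ can be taken to itself be a canonical basic neighborhood $X-X$, one checks directly that $\phi(K;\bar a')$ is a bounded neighborhood for the canonical topology on $K$. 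Alternatively, the \cite{prdf,prdf2} machinery shows that ``heaviness'' (equivalently, full dp-rank in this setting) is definable in families---but that is a theorem specific to this context, not a general fact about dp-rank, and you should cite it rather than asserting it as folklore.
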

\begin{proof}
  The proof of (\cite{prdf4}, Theorem~6.27) applies here.
\end{proof}

Theorem~\ref{defcourse} then yields
\begin{corollary}\label{wow:cor}
  Every unstable dp-finite field admits a definable V-topology.
\end{corollary}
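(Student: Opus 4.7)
The plan is essentially to chain together the two preceding results: the immediately preceding theorem identifies the canonical topology on $K$ as a definable $W_n$-topology, while Theorem~\ref{defcourse} reduces the problem of finding a definable V-topology to extracting a V-topological coarsening. All the genuine work has already been carried out.

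More precisely, let $K$ be an unstable dp-finite field, and let $n = \dpr(K)$. First I would invoke the preceding theorem to obtain the canonical topology $\tau$ on $K$ as a definable $W_n$-topology. Next I would apply part~\ref{p1} of Theorem~\ref{defcourse} to produce a V-topology $\tau'$ on $K$ that is coarser than $\tau$. Finally, since $\tau$ is definable in the given structure on $K$, part~\ref{p3} of Theorem~\ref{defcourse} immediately upgrades $\tau'$ to a definable V-topology. This $\tau'$ witnesses the corollary.

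There is no real obstacle to overcome here, since the hard content has been packaged into the two results being combined: the production of a V-topological coarsening of any $W_n$-topology (which goes through $\vee$-definable integral closures and Dilworth's theorem in the proof of Theorem~\ref{defcourse}), and the transfer of definability from a $W_n$-topology to its V-topological coarsenings (which relies on Proposition~\ref{upgrader:prop} together with an elementary-extension argument). The case $n = 1$ is already immediate from Corollary~\ref{basic:cor}(4), which identifies $W_1$-topologies with V-topologies; for larger $n$, the two parts of Theorem~\ref{defcourse} do all the lifting.
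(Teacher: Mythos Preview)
Your proposal is correct and follows exactly the paper's approach: the paper simply writes ``Theorem~\ref{defcourse} then yields'' before stating the corollary, which is precisely the combination of the preceding theorem (canonical topology is a definable $W_n$-topology) with parts~\ref{p1} and~\ref{p3} of Theorem~\ref{defcourse} that you spell out.
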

By Proposition~3.5 in \cite{hhj-v-top}, definable V-topologies on
sufficiently saturated fields are induced by externally definable
valuation rings.  Given that the Shelah expansion of a dp-finite
structure is dp-finite, we conclude the following:
\begin{corollary}\label{hS:cor}
  The henselianity conjecture for dp-finite fields implies the Shelah
  conjecture for dp-finite fields.
\end{corollary}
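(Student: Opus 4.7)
My plan is to assemble the statement directly from the ingredients already listed in the paragraph preceding the corollary, namely Corollary~\ref{wow:cor}, Proposition~3.5 of \cite{hhj-v-top}, and the preservation of dp-finiteness under Shelah expansion; the only real task is bookkeeping. First I would invoke the standard reduction, noted in the introduction, that to prove the Shelah conjecture for dp-finite fields it suffices to consider sufficiently saturated $K$, since each of the four disjuncts in the Shelah conjecture either transfers along elementary equivalence or (in the case of admitting a non-trivial henselian valuation) lifts from a saturated elementary extension back down to $K$.

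With that reduction in place, I would split on stability. If $K$ is stable, then by Proposition~7.2 in \cite{Palacin} (cited in the introduction) $K$ is finite or algebraically closed, and the Shelah conclusion already holds. So I may assume $K$ is an unstable dp-finite field, sufficiently saturated. By Corollary~\ref{wow:cor}, $K$ carries a definable V-topology $\tau$.

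Now I would apply Proposition~3.5 of \cite{hhj-v-top} to produce an externally definable valuation ring $\Oo$ on $K$ inducing $\tau$; that is, $\Oo$ is definable in the Shelah expansion $K^{\mathrm{Sh}}$. Since the Shelah expansion of a dp-finite (equivalently NIP of finite dp-rank) structure is again dp-finite, the structure $K^{\mathrm{Sh}}$ remains dp-finite, and therefore so does its reduct $(K,+,\cdot,\Oo)$. Thus $(K,\Oo)$ is a dp-finite valued field. Invoking the assumed henselianity conjecture for dp-finite fields, the valuation ring $\Oo$ is henselian. Since $\tau$ is non-discrete Hausdorff, $\Oo$ is not all of $K$, so $\Oo$ is a non-trivial henselian valuation ring on $K$, giving the fourth clause of the Shelah conjecture.

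There is no substantive obstacle; the work has been done elsewhere in the excerpt and in the cited literature. The only mildly delicate point is the saturation reduction for the Shelah conjecture itself, which is why I would dispatch it first and then apply Proposition~3.5 of \cite{hhj-v-top} on the saturated $K$ without needing to worry about pushing an externally definable object back to an arbitrary model.
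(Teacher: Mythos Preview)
Your proposal is correct and follows essentially the same route as the paper: reduce to the saturated unstable case via \cite{Palacin}, apply Corollary~\ref{wow:cor} to get a definable V-topology, use Proposition~3.5 of \cite{hhj-v-top} to extract an externally definable valuation ring, and then invoke the henselianity conjecture on the dp-finite Shelah expansion. The paper treats this as immediate from the preceding discussion, and your write-up simply spells out the same steps.
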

This gives a smoother proof of the Shelah conjecture for positive
characteristic dp-finite fields, where the henselianity conjecture is
known (Theorem~2.8 in \cite{prdf}).

We can say the following more precise version of
Corollary~\ref{wow:cor}:
\begin{theorem}\label{insight:thm}
  Let $K$ be an unstable dp-finite field.  The definable V-topologies
  on $K$ are exactly the V-topological coarsenings of the canonical
  topology on $K$.
\end{theorem}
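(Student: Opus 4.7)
The plan is to prove the two inclusions separately. Denote the canonical topology by $\tau$. For $\supseteq$, I would invoke the preceding theorem (that $\tau$ is a definable $W_n$-topology) together with Theorem~\ref{defcourse}(\ref{p3}) (every V-topological coarsening of a definable $W_n$-topology is itself definable); this gives the containment immediately.

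For $\subseteq$, let $\sigma$ be an arbitrary definable V-topology on $K$; I want to show $\sigma$ is coarser than $\tau$. By the Fact from the introduction, $\tau$ has $\{D - D : D \text{ definable}, \dpr(D) = \dpr(K)\}$ as a neighborhood basis of $0$, so it suffices to prove that every $\sigma$-neighborhood $U$ of $0$ contains $D - D$ for some definable $D$ of full dp-rank. First I would use continuity of subtraction in $\sigma$ to produce a definable $\sigma$-neighborhood $V$ of $0$ with $V - V \subseteq U$; taking $D := V$, the remaining task is to show $\dpr(V) = \dpr(K) = n$.

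The heart of the argument is therefore the auxiliary claim that every definable neighborhood of $0$ in a definable V-topology on a dp-finite field has full dp-rank. I would prove this by passing to a sufficiently saturated elementary extension $\Kk \succeq K$, so that $\sigma^\Kk$ is $\omega$-complete. Applying Lemma~\ref{extend:lem} with $n = 1$, $\sigma^\Kk$ is then induced by a $\vee$-definable valuation ring $\Oo \subseteq \Kk$, with basic neighborhoods $\{c \Oo : c \in \Kk^\times\}$. Setting $T := \{x \in \Kk^\times : x^{-1} \in \Oo\} \cup \{0\}$, the valuation-ring axiom yields $\Kk = \Oo \cup T$, and inversion is a definable bijection $\Oo \setminus \{0\} \to T \setminus \{0\}$, so $\dpr(\Oo) = \dpr(T)$. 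Subadditivity of dp-rank under two-fold unions (extended to $\vee$-definable sets) then gives $n = \dpr(\Kk) \le \max(\dpr(\Oo), \dpr(T)) = \dpr(\Oo)$, forcing $\dpr(\Oo) = n$. Since $V^\Kk$ contains $c \Oo$ for some $c \in \Kk^\times$, we conclude $\dpr(V) = \dpr(V^\Kk) \ge \dpr(c \Oo) = n$.

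The main obstacle I expect is justifying the dp-rank manipulations for the $\vee$-definable set $\Oo$---subadditivity of $\dpr$ under unions of $\vee$-definable sets, and invariance of dp-rank under elementary extension---both of which are standard features of dp-rank in NIP theories but must be invoked cleanly. Once the dp-rank claim is secured, the Fact identifies $V - V$ (and hence $U$) as a $\tau$-neighborhood of $0$, so $\sigma$ is a V-topological coarsening of $\tau$, completing the remaining inclusion.
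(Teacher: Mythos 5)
Your overall decomposition matches the paper's: the containment ``coarsenings $\Rightarrow$ definable'' is handled exactly as in the paper (canonical topology is a definable $W_n$-topology plus Theorem~\ref{defcourse}(\ref{p3})), and for the converse both you and the paper reduce to showing that a definable neighborhood of $0$ in the definable V-topology has full dp-rank, so that $B-B$ (resp.\ $V-V$) is a neighborhood in the canonical topology by Corollary~5.10 of \cite{prdf2}. Where you diverge is in how that dp-rank claim is proved. The paper stays entirely at the level of definable sets: rescale the definable bounded neighborhood $B$ so that it is a $W_1$-set, note that then every $x$ lies in $B$ or has $1/x \in B$, so $K$ is covered by $B$ and its image under the definable bijection $x \mapsto 1/x$, and conclude $\dpr(B) = \dpr(K)$ from finite-union subadditivity. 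You instead pass to a saturated extension, build the $\vee$-definable valuation ring $\Oo$ via Lemma~\ref{extend:lem}/Proposition~\ref{gap:prop}, and run the covering argument $\Kk = \Oo \cup \Oo^{-1}$ at the level of $\vee$-definable sets; this is correct (the needed fact, that a small union of definable sets over a fixed small parameter set covering $\Kk$ contains a piece of full dp-rank, follows from the mutually-indiscernible-sequence characterization of dp-rank, and the sup is attained since ranks are bounded by $n$; invariance under elementary extension is standard), but it imports saturation, the subring machinery, and dp-rank bookkeeping for $\vee$-definable sets that the paper's one-line rescaling trick avoids. In short: your proof works, but note that the hypothesis ``V-topology'' can be exploited directly on the definable neighborhood itself (as a $W_1$-set) without ever constructing a valuation ring, which is both shorter and free of the technical points you flagged.
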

\begin{proof}
  Let $\tau_0$ be the canonical topology.  If $\tau$ is a
  V-topological coarsening of $\tau_0$, then $\tau$ is definable by
  Theorem~\ref{defcourse}.  Conversely, suppose that $\tau$ is a
  definable V-topology.  Let $B$ be a definable bounded neighborhood.
  After rescaling $B$, we may assume that for any $x, y \in K$,
  \begin{equation*}
    x \in B y \text{ or } y \in B x
  \end{equation*}
  as this is the definition of a $W_1$-topology.  Taking $y = 1$, we
  see that for any $x$,
  \begin{equation*}
    x \in B \text{ or } 1 \in B x,
  \end{equation*}
  or equivalently, $B$ contains one of $x$ or $1/x$.  Then $B$ must
  have full dp-rank, as two copies of it cover $K$.  Then $B - B$ is a
  neighborhood in $\tau_0$, by Corollary~5.10 in \cite{prdf2}.  Now
  the set $B - B$ is a bounded neighborhood in $\tau$, so the family
  of sets
  \begin{equation*}
    \{a \cdot (B - B) : a \in K^\times\}
  \end{equation*}
  is a neighborhood basis of 0 for $\tau$.  All these sets are
  neighborhoods in $\tau_0$, and so $\tau$ must be coarser than
  $\tau_0$.  So $\tau$ is one of the V-topological coarsenings of
  $\tau_0$.
\end{proof}

\subsection{Three conjectures}\label{conjs:sec}

If $\tau_1, \ldots, \tau_n$ are field topologies on $K$, then they
generate a minimal common refinement $\tau$.  This topology $\tau$ is
also a field topology---except that it may be discrete.  A basis of
neighborhoods is given by
\begin{equation*}
  \{ U_1 \cap \cdots \cap U_n : U_1 \in \tau_1, ~ U_2 \in \tau_2,
  \ldots, U_n \in \tau_n\}.
\end{equation*}

\begin{conjecture}\label{dream}
  Let $(K,\tau)$ be a W-topological field of characteristic 0.  Then
  $\tau$ is generated by jointly independent topologies $\tau_1,
  \ldots, \tau_n$, and each $\tau_i$ has a \emph{unique} V-topological
  coarsening.
\end{conjecture}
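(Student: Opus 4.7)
The plan is to reduce from topologies to rings via local equivalence, decompose the resulting $W_n$-ring by its maximal ideals, and isolate the true content of the conjecture as a single local claim in characteristic 0.

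First, by Lemma~\ref{extend:lem} I would pass to a locally equivalent $\omega$-complete topological field $(K,\tau)$ on which $\tau$ is induced by a $W_n$-ring $R$. Existence of a generating family of jointly independent topologies, and uniqueness of V-topological coarsenings for each of them, can all be phrased by local sentences in the sense of \cite{PZ}, so the conclusion will transfer back to the original $(K,\tau)$. Characteristic is preserved by this reduction.

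Second, $R$ is semilocal; let $\mathfrak{m}_1, \ldots, \mathfrak{m}_k$ enumerate its maximal ideals, with $k \le n$. Set $R_i = R_{\mathfrak{m}_i}$, a $W_n$-ring by Lemma~\ref{super:lem}, and let $\tau_i = \tau_{R_i}$. Since $R = R_1 \cap \cdots \cap R_k$ (a general fact for semilocal domains) and since every non-zero ideal of $R$ is cofinally approximated by intersections $I_1 \cap \cdots \cap I_k$ with $0 \ne I_j \le R_j$, the common refinement of $\tau_1, \ldots, \tau_k$ coincides with $\tau$. For joint independence, I would invoke the Chinese remainder theorem: given $i \ne j$, an element of $\mathfrak{m}_j \setminus \mathfrak{m}_i$, suitably rescaled, exhibits a $\tau_i$-bounded but $\tau_j$-unbounded set, showing that no $\tau_j$ is refined by the join of the remaining $\tau_i$'s.

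Third, the substantive claim is the following local version: in characteristic 0, if $R$ is a \emph{local} $W_n$-ring on $K$, then its integral closure $\tilde{R}$ is itself a valuation ring; equivalently, $\tau_R$ admits a unique V-topological coarsening. Granting this for each $R_i$, Proposition~\ref{iclose:prop} forces $\tilde{R}_i$ to be a single valuation ring $\Oo_i$, so each $\tau_i$ has exactly one V-topological coarsening (namely that induced by $\Oo_i$), as required.

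The main obstacle is this local characteristic-0 claim, which is essentially the heart of the conjecture. It fails in positive characteristic, where Artin--Schreier-style constructions produce local $W_n$-rings whose integral closures have several pairwise incomparable valuation overrings, so any proof must genuinely use $\characteristic(K) = 0$. The most natural route---already flagged in \S\ref{sec:inflators}---goes through $n$-inflators: the $2$-inflator analysis of \cite{prdf4} already yields the claim in weight $2$. An analogous classification of characteristic-0 $n$-inflators, exploiting the uniqueness of the unramified extension at a residue-characteristic-0 place, would settle the general case; but carrying out that classification is itself a substantial undertaking and is the true content of the conjecture.
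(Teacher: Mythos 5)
You are proposing a proof of Conjecture~\ref{dream}, which the paper does not prove: it is stated as an open conjecture, and the author explicitly says the only evidence for it is the classification of $W_2$-topologies in \S\ref{w2-fun}. Your proposal does not prove it either, since you yourself defer the ``substantive claim'' to an unestablished classification of $n$-inflators; so at best this is a proposed reduction. But the reduction itself contains two genuine errors, so even as a strategy it does not stand.

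First, localizing at the maximal ideals does not produce jointly independent topologies. Take $\Oo_1,\Oo_2$ incomparable but \emph{dependent} valuation rings on $K$ (say two rank-$2$ refinements of a common rank-$1$ valuation ring $\Oo'$). Then $R=\Oo_1\cap\Oo_2$ is a $W_2$-ring with two maximal ideals and $R_{\mm_i}=\Oo_i$, yet $\tau_{\Oo_1}=\tau_{\Oo_2}=\tau_{\Oo'}$ (any $c$ with positive value under the common rank-$1$ valuation satisfies $c\Oo_i\subseteq \Oo_1\cap\Oo_2$), so the localized topologies coincide and are certainly not independent; your CRT/boundedness argument cannot see this, because there is nothing to separate. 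Second, and more seriously, your ``substantive local claim'' (a local $W_n$-ring in characteristic $0$ has a valuation ring as integral closure, equivalently $\tau_R$ has a unique V-topological coarsening) is false. Let $S=\Qq[x]_{(x)}\cap\Qq[x]_{(x-1)}$ and $R=\{f\in S : f(0)=f(1)\}$. Choosing $h\in S$ with $h(0)=0$, $h(1)=1$, we have $h^2-h\in R$ and $S=R[h]$, so $S$ is integral over $R$ and both maximal ideals of $S$ contract to the single ideal $\{f\in R: f(0)=f(1)=0\}$; hence $R$ is local. The conductor contains $x(x-1)S$, so $R$ and $S$ are co-embeddable; a short computation (working modulo the conductor, where $R$ acts through $\Qq$ on a $2$-dimensional space) shows $\wt(R)<\infty$. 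Yet $\tilde{R}=S$ is an intersection of two \emph{independent} DVRs, and $\tau_R=\tau_S$ has two V-topological coarsenings. So locality of a ring inducing $\tau$ is not the right notion of indecomposability: the conjecture is fine for this $\tau$ (it is generated by the two V-topologies), but your maximal-ideal decomposition cannot find that splitting, since here it is trivial. A smaller gap: the conclusion of the conjecture quantifies existentially over decompositions into topologies, which is not literally a local sentence, so the transfer step in your first paragraph also requires an argument (compare how the paper transfers such statements via definability and disjunctions of first-order sentences in Theorem~\ref{defcourse}).
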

The only real evidence for Conjecture~\ref{dream} is the
classification of $W_2$-topologies in \S\ref{w2-fun} below.
\begin{theorem}
  Conjecture~\ref{dream} implies the Shelah conjecture for dp-finite
  fields.
\end{theorem}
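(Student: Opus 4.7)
The plan is to combine the decomposition furnished by Conjecture~\ref{dream} with the existing reduction machinery for dp-finite fields. First, I would reduce to the case of an \emph{unstable} dp-finite field of characteristic zero. In positive characteristic the henselianity conjecture for dp-finite fields is already known (Theorem~2.8 in \cite{prdf}), so the Shelah conjecture follows in that setting from Corollary~\ref{hS:cor}. If $K$ is stable and dp-finite, then $K$ is finite or algebraically closed by Proposition~7.2 in \cite{Palacin}. So we may assume that $K$ is unstable dp-finite of characteristic zero with $\dpr(K) = n$.

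Next, by Proposition~\ref{win:prop}, the canonical topology $\tau$ on $K$ is a definable $W_n$-topology, and characteristic zero makes Conjecture~\ref{dream} applicable. I would apply the conjecture to produce jointly independent field topologies $\tau_1, \ldots, \tau_m$ whose join is $\tau$, each having a unique V-topological coarsening $\sigma_i$. Since $\tau$ refines each $\tau_i$, each $\sigma_i$ is also a V-topological coarsening of $\tau$ itself, and is therefore definable by part~\ref{p3} of Theorem~\ref{defcourse}. By Proposition~3.5 in \cite{hhj-v-top}, combined with the fact that Shelah expansions of dp-finite structures remain dp-finite, each $\sigma_i$ is the topology induced by an externally definable valuation ring $\Oo_i$ on $K$.

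The main task, and the point at which I expect the real difficulty, is to show that at least one $\Oo_i$ is henselian. The strategy is to exploit the \emph{uniqueness} clause of Conjecture~\ref{dream}: if $\Oo_i$ were non-henselian, then a Krasner-style argument applied to a separable polynomial witnessing failure of henselianity should produce a second V-topological coarsening of $\tau_i$ (for instance, by pulling back along an inequivalent extension of the valuation $v_i$ to a finite Galois extension in which $v_i$ splits), contradicting the uniqueness. This matches the pattern suggested by the $W_2$-classification of \S\ref{w2-fun}, where the two cases admitting a unique V-topological coarsening (V-topologies and DV-topologies) are precisely those attached to a single henselian valuation, while the join of two independent V-topologies has two V-topological coarsenings. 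Once even one $\Oo_i$ is henselian, $K$ carries a non-trivial henselian valuation, and the Shelah conjecture for $K$ follows.
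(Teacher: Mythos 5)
Your setup (reduce to the unstable, characteristic-zero case; apply Conjecture~\ref{dream} to the canonical topology; note that each $\sigma_i$ is a definable V-topological coarsening of $\tau$ by Theorem~\ref{defcourse}.\ref{p3}) is fine, but the core step --- ``at least one $\Oo_i$ is henselian, because non-henselianity would contradict the uniqueness clause'' --- does not work as you describe, and it is exactly where the real content lies. If $v_i$ is not henselian, the Krasner/splitting argument produces two inequivalent extensions of $v_i$ to a finite Galois extension $L$ of $K$; these are V-topologies on $L$, not on $K$, and both restrict to $v_i$ on $K$, so they give no second V-topological coarsening of $\tau_i$ and no violation of the uniqueness clause for $\tau_i$. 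Note also that Conjecture~\ref{dream} is a purely local statement about abstract W-topological fields, and non-henselian valuations abound on such fields compatibly with it (a single V-topology trivially satisfies the conjecture with $m=1$, whatever the valuation's henselianity), so uniqueness of the coarsening of a factor cannot by itself detect henselianity; as stated your mechanism is close to assuming what is to be proved. Even if you transplant the argument to $L$ (which is dp-finite, and where the two extensions do give two distinct V-topological coarsenings of the canonical topology by Theorem~\ref{insight:thm}), this is still consistent with Conjecture~\ref{dream}: the decomposition could simply have $m\ge 2$ factors, one coarsening each.

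What is missing is precisely the ingredient the paper uses to rule out $m\ge 2$: for the canonical topology of a dp-finite field, squares of $1$-units are a neighborhood of $1$, i.e.\ $1+J_K\subseteq(1+J_K)^2$ (Proposition~5.17(4) in \cite{prdf2}), which yields the local sentence $\forall U\,\exists V: 1+V\subseteq(1+U)^2$; and this local sentence is incompatible with $\tau$ being generated by two or more jointly independent Hausdorff topologies (one finds $x$ near $-1$ in one factor and near $1$ in the others, so $x^2$ is near $1$ in $\tau$ but is not the square of anything near $1$). The paper's proof runs in the opposite direction to yours: it reduces to the henselianity conjecture via Corollary~\ref{hS:cor}, supposes henselianity fails, uses the standard techniques to build a dp-finite field carrying two \emph{independent} definable valuations (hence, by Theorem~\ref{insight:thm}, two distinct V-topological coarsenings of its canonical topology), and then shows the decomposition from Conjecture~\ref{dream} is impossible both when $m=1$ (two coarsenings contradict uniqueness) and when $m\ge 2$ (the squaring sentence fails). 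Without an argument excluding $m\ge 2$, your outline cannot close, so there is a genuine gap.
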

\begin{proof}
  By Corollary~\ref{hS:cor}, it suffices to prove the henselianity
  conjecture for dp-finite fields.  Suppose the henselianity
  conjecture fails.  By the usual techniques\footnote{See the proofs
    of Lemmas~9.7, 9.8 in \cite{prdf2}, or Propositions~6.3, 6.4 in
    \cite{prdf4}.}, we get a dp-finite multivalued field
  $(K,\Oo_1,\Oo_2)$, where $\Oo_1$ and $\Oo_2$ are independent
  non-trivial valuation rings.  By Lemma~2.6 in \cite{prdf}, $K$ has
  characteristic 0.  Let $\tau$ be the canonical topology on the
  structure $(K,\Oo_1,\Oo_2)$.  Note that $\Oo_1$ and $\Oo_2$ define
  two distinct V-topological coarsenings of $\tau$, by
  Theorem~\ref{insight:thm}.

  Applying Conjecture~\ref{dream} to $\tau$, we decompose $\tau$ into
  independent $\tau_1, \ldots, \tau_m$.  We claim $m > 1$.  Otherwise,
  $\tau = \tau_1$, and then $\tau$ has a unique V-topological
  coarsening, a contradiction.

  So $m \ge 2$.  Because each $\tau_i$ is Hausdorff, there are $U_i
  \in \tau_i$ such that $(1 + U_i) \cap (-1 + U_i)$.  Let $U = U_1
  \cap \cdots \cap U_m$.  Then $U \in \tau$.
  \begin{claim}\label{oppo:claim}
    For every $V \in \tau$, we have $1 + V \not \subseteq (1 + U)^2$.
  \end{claim}
  \begin{claimproof}
    Shrinking $V$, we may assume $V = V_1 \cap \cdots \cap V_n$, where
    each $V_i \in \tau_i$.  By continuity of multiplication, there are
    $W_i \in \tau_i$ such that $(1 + W_i)^2 \subseteq 1 + V_i$.
    Shrinking the $W_i$, we may assume $W_i = - W_i$ and $W_i
    \subseteq U_i$.  By joint independence of the $\tau_i$, we can
    find
    \[ x \in (-1 + W_1) \cap (1 + W_2) \cap \cdots \cap (1 + W_m).\]
    Then $\pm x \in 1 + W_i$ for all $i$, so $x^2 \in 1 + V_i$ for all
    $i$.  Thus $x^2 \in 1 + V$.  On the other hand, $x^2 \notin (1 +
    U)^2$.  Otherwise, one of $x$ or $-x$ is in $1 + U$.
    \begin{itemize}
    \item If $x \in U$, then $x \in 1 + U_1$.  But $x \in -1 + W_1
      \subseteq -1 + U_1$.  So the two sets $1 + U_1$ and $-1 + U_1$
      fail to be disjoint.
    \item If $-x \in U$, then $-x \in 1 + U_2$.  But $-x \in -1 - W_2
      \subseteq -1 + U_2$.  Then the two sets $1 + U_2$ and $-1 + U_2$
      fail to be disjoint.
    \end{itemize}
    Either way, this contradicts the choice of the $U_i$.
  \end{claimproof}
  Let $\Kk$ be a saturated elementary extension of $K$, and let
  $J_{\Kk}$ be the group of $K$-infinitesimals.  By Proposition~5.17(4) in
  \cite{prdf2}, $1 + J_K \subseteq (1 + J_K)^2$, where $(1 + J_K)^2$
  denotes the image of $1 + J_K$ under the squaring map.  The two sets
  can be written as filtered intersections
  \begin{align*}
    1 + J_K &= \bigcap \{ 1 + V : V \text{ a $K$-definable canonical
      basic neighborhood}\} \\
    (1 + J_K)^2 &= \bigcap \{ (1 + U)^2 : U \text{ a $K$-definable
      canonical basic neighborhood}\}.
  \end{align*}
  Therefore the following local sentence holds in $K$ with its
  canonical topology $\tau$:
  \begin{equation*}
    \forall U \in \tau \exists V \in \tau : 1 + V \subseteq (1 + U)^2.
  \end{equation*}
  This contradicts Claim~\ref{oppo:claim}.
\end{proof}

\begin{conjecture}\label{perfect:conj}
  If $K$ is a perfect field of positive characteristic, and $\tau$ is
  a $W_n$-topology on $K$, then $\tau$ is generated by $n$ independent
  V-topologies.
\end{conjecture}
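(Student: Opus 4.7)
The plan is to realize $\tau$ as the topology of a $W_n$-ring on an $\omega$-complete extension, then use Frobenius to reduce to the integral closure, and finally invoke Propositions~\ref{iclose:prop} and~\ref{multi:prop} to extract the desired decomposition.

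By Corollary~\ref{basic:cor}(1), $(K,\tau)$ is locally equivalent to some $(K^*,\tau_R)$ with $K^*$ $\omega$-complete and $R$ a $W_n$-ring on $K^*$. The conclusion ``$\tau$ is generated by $n$ independent V-topologies'' can be expressed by a collection of local sentences asserting the existence of $n$ bounded neighborhoods whose rescalings are $W_1$-sets together with the approximation-type axioms of joint independence, so it suffices to prove the statement in $(K^*,\tau_R)$. The real goal then becomes the purely ring-theoretic claim that \emph{the integral closure $\widetilde R$ of $R$ in $K^*$ is $\tau_R$-bounded}, so that $\tau_R = \tau_{\widetilde R}$.

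Granting that, Proposition~\ref{iclose:prop} writes $\widetilde R = \Oo_1 \cap \cdots \cap \Oo_m$ with $m \le n$ and the $\Oo_i$ pairwise incomparable valuation rings, and each $\tau_i := \tau_{\Oo_i}$ is a V-topological coarsening of $\tau$. The common refinement $\bigvee_i \tau_i$ has a neighborhood basis of finite intersections of non-zero ideals of the $\Oo_i$; by the approximation theorem for pairwise incomparable valuations (Corollary~6.8 in \cite{prdf2}, already used in the proof of Theorem~\ref{defcourse}), such intersections are cofinal in the nonzero ideals of $\bigcap_i \Oo_i = \widetilde R$, so $\tau = \bigvee_i \tau_i$. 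Pairwise incomparability of the $\Oo_i$ also delivers joint independence of the $\tau_i$. Finally, $\tau = \tau_{\widetilde R}$ has weight exactly $m$ by Proposition~\ref{multi:prop}, forcing $m = n$.

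The main obstacle is the $\tau_R$-boundedness of $\widetilde R$. This step \emph{fails} in characteristic $0$---the DV-topologies classified in \S\ref{w2-fun} are $W_2$-topologies whose underlying ring is not integrally closed and whose integral closure induces a strictly coarser topology---so both hypotheses (perfect, positive characteristic) must enter here. The natural tool is Frobenius. I would first replace $R$ by its perfection $R^{\mathrm{perf}} := \bigcup_k R^{1/p^k} \subseteq \widetilde R$ and argue that $R^{\mathrm{perf}}$ still induces $\tau_R$, by showing inductively that $R^{1/p^k}$ is bounded over $R$: the $R$-module $R^{1/p}/R$ is controlled by a $p$-basis, and the $W_n$-condition, applied to such a $p$-basis, should limit its cube rank and produce a uniform $c \in K^\times$ with $R^{1/p} \subseteq c^{-1} R$. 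Iterating and using $\omega$-completeness yields $R^{\mathrm{perf}}$ bounded over $R$. Once $R^{\mathrm{perf}}$ is a perfect $W_n$-ring inducing $\tau$, every $x \in \widetilde R$ is \emph{separable} integral over $R^{\mathrm{perf}}$, and a Hensel-type argument using the local boundedness of $\tau$ and the $W_n$-property should force $x \in R^{\mathrm{perf}}$, giving $\widetilde R = R^{\mathrm{perf}}$. Making the Frobenius bounding step rigorous---essentially a uniform finiteness statement for $R^{1/p}$ over $R$ constrained by the $W_n$-condition---is the genuinely new content of the conjecture.
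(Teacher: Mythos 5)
The statement you are trying to prove is stated in the paper as an open conjecture: the author offers no proof, only the remark that the $W_2$, odd-characteristic case follows from Proposition~5.32 of \cite{prdf4} together with the methods of \S\ref{w2-fun}. So there is no proof in the paper to compare against, and the relevant question is whether your proposal closes the conjecture. It does not. Your outer reduction is fine as far as it goes: by Theorem~\ref{vsum:thm} the target property is captured by a local sentence, perfection and characteristic $p$ are first-order on the field sort, so you may pass to an $\omega$-complete locally equivalent structure and realize $\tau$ as $\tau_R$ for a $W_n$-ring $R$ (Lemma~\ref{extend:lem}); and \emph{if} the integral closure $\widetilde{R}$ were $\tau_R$-bounded, then $\widetilde{R}$ would be co-embeddable with $R$, Proposition~\ref{iclose:prop} would give $\widetilde{R} = \Oo_1 \cap \cdots \cap \Oo_m$, and the $V^n$ machinery of \S\ref{vn:sec} would yield the decomposition. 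But the entire content of the conjecture sits in the boundedness step, and there you offer only heuristics. The claim that the $W_n$-condition applied to a $p$-basis ``should'' bound $R^{1/p}$ over $R$ is not an argument: $R^{1/p}/R$ need not be finitely generated, and no mechanism is given converting the $W_n$-condition into a single $c \in K^\times$ with $R^{1/p} \subseteq c^{-1}R$. Note also that $m$ need not equal $n$: a $W_n$-topology may have weight $< n$ (Corollary~\ref{basic:cor}(3)), so the conclusion can only be ``at most $n$'' or must fix $n = \wt(\tau)$.

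The second gap is worse. Even granting that the perfection $R^{\mathrm{perf}}$ is bounded and induces $\tau$, your ``Hensel-type argument'' that every (separable) integral element over $R^{\mathrm{perf}}$ already lies in $R^{\mathrm{perf}}$ has no available engine: nothing in the setting provides henselianity, and a perfect domain of positive characteristic need not be integrally closed, nor need it absorb separable integral elements (perfection kills purely inseparable obstructions only). In other words, $\widetilde{R} = R^{\mathrm{perf}}$ is an unsupported assertion, and it is essentially a restatement of the conjecture in ring form: the char-$0$ counterexamples (DV-topologies, where $\widetilde{R}$ induces a strictly coarser topology) show the statement is genuinely delicate, and your proposal localizes the difficulty correctly but, as you yourself concede, does not resolve it. As it stands this is a plausible reduction of the conjecture to a ring-theoretic boundedness statement about Frobenius and integral closure, not a proof.
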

This would imply the positive-characteristic case of the
``valuation-type conjecture'' (Conjecture~10.1 in \cite{prdf2}), which
says that the canonical topology on an unstable dp-finite field is a
V-topology.  This is false in characteristic 0 (\S 10 in
\cite{prdf4}), but may still hold in positive characteristic.

The evidence for Conjecture~\ref{perfect:conj} is that it holds for
$W_2$-topologies in odd characteristic, by combining Proposition~5.32
of \cite{prdf4} with the methods of \S\ref{w2-fun} below.

\begin{conjecture}\label{burden}
  If $R$ is a $W_n$-ring on an algebraically closed field, then $R$ is
  NTP$_2$, and the burden of $R$ is at most $n$.
\end{conjecture}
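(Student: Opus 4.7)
The plan is to reduce the conjecture to the analogous statement for the integral closure $\tilde{R}$, which by Proposition~\ref{iclose:prop} decomposes as $\Oo_1 \cap \cdots \cap \Oo_m$ for some $m \le n$ pairwise incomparable valuation rings on $K$. Since the base field is algebraically closed, each $(K, \Oo_i)$ is a model of ACVF, hence dp-minimal and in particular of burden $1$. By the classical approximation theorem, pairwise incomparable valuations on an algebraically closed field are automatically jointly independent, which should make the standard orthogonality/subadditivity argument (along the lines of Chernikov's treatment of NTP$_2$) yield $\bdn(K,\Oo_1,\ldots,\Oo_m) \le m \le n$ and NTP$_2$ for this expansion. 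This half of the argument ought to be fairly routine.

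The substantive task is then to compare the complexity of $(K, R)$ with that of $(K, \tilde{R})$. Since $R$ is semilocal with at most $n$ maximal ideals, one can localize and reduce to $R$ local, in which case $\tilde{R}$ is a semilocal Prüfer-like extension. The natural route is to try to show that $R$ is interpretable, or at least $\vee$-definable with a small parameter set, in $(K, \tilde{R})$ — for instance by identifying how $R$ sits inside $\tilde{R}$ via data in the product of residue fields $\tilde{R}/J \cong \prod_i \tilde{R}/\mathfrak{m}_i$, together with the valuations $v_i$ on $K$. If $R$ can be recovered from $(K,\Oo_1,\ldots,\Oo_m)$ plus this finite residue-level data, the burden bound transfers.

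The main obstacle I anticipate is precisely this comparison step: the extension $\tilde{R}/R$ need not be finitely generated as an $R$-module, so one has no a priori uniform first-order description of $R$ inside $\tilde{R}$. A possible workaround, which would avoid passing through $\tilde{R}$, is to argue directly that an inp-pattern of depth $n{+}1$ in the language $(+,\cdot,R)$ would produce elements $b_1, \ldots, b_{n+1} \in K$ witnessing $\crk_R(K) \ge n+1$ and contradicting $\wt(R) \le n$. Given a pattern $\{\phi_i(x; a_{i,j})\}_{i \le n+1, j < \omega}$ of formulas whose parameters can be taken inside $R$, the hope is to translate rowwise $k_i$-inconsistency and columnwise consistency into a configuration of $n{+}1$ elements of $K$ none of which lies in the $R$-span of the others, using the characterization of cube rank from Proposition~7.3 in \cite{prdf4}. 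If such a direct burden-to-cube-rank translation works, it handles both the NTP$_2$ assertion and the burden bound simultaneously and sidesteps the delicate analysis of how $R$ sits inside its integral closure.
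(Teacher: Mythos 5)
You should first be aware that the statement you are arguing for is stated in the paper only as a \emph{conjecture}: the paper gives no proof, only two pieces of evidence (multivaluation rings, via \cite{acf-multival}, and the diffeovaluation $W_2$-rings of \cite{prdf4}). So your proposal has to stand on its own, and as written it does not. The central gap is exactly the step you flag as the ``substantive task'': passing from $(K,\tilde{R})$ to $(K,R)$. The paper's own second evidence case is a counterexample to that strategy. For the diffeovaluation rings of \S 8.4 of \cite{prdf4}, $R = \{x \in K : \val(x) \ge 0 \text{ and } \val(\partial x) \ge 0\}$ is a $W_2$-ring, but it is carved out using a derivation $\partial$; its integral closure is controlled by the single valuation $\val$, and $R$ is not interpretable in, nor recoverable from, the multivaluation (here valuation) structure together with any finite residue-level data --- the derivation is genuinely extra structure. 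That is precisely why the paper cites a separate, substantial argument (\cite{prdf4}, Theorem~10.24) to get the burden bound in that case. A smaller but real error in the same half of your argument: pairwise incomparable valuation rings on an algebraically closed field are \emph{not} automatically independent; incomparability rules out containment but not a common nontrivial coarsening (compose two independent valuations on the residue field of a third). The NTP$_2$ statement for multivaluation rings is true, but it is a theorem of \cite{acf-multival}, not a routine orthogonality argument.

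Your proposed workaround --- translating an inp-pattern of depth $n+1$ in $(K,+,\cdot,R)$ directly into $n+1$ elements witnessing $\crk_R(K) \ge n+1$ --- is stated as a hope, and the missing mechanism is the whole difficulty. Burden quantifies over arbitrary formulas in the expanded language, and without quantifier elimination or some structural description of definable sets in $(K,R)$, the purely algebraic invariant $\wt(R)$ exerts no control over inp-patterns: rows of a pattern need not be (and in the diffeovaluation example cannot be reduced to) conditions of membership in $R$-spans, so the characterization of cube rank from Proposition~7.3 of \cite{prdf4} never gets traction. If such a soft translation existed, the known cases would not have required case-specific model-theoretic work, and the statement would not have been left as a conjecture. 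In short: the reduction to the integral closure fails on the paper's own examples, and the direct burden-to-weight translation is an unproved (and, on current knowledge, unavailable) step, so the proposal does not prove the statement.
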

For example, this holds for
\begin{itemize}
\item Multivaluation rings, by \cite{acf-multival}.
\item The diffeovaluation $W_2$-rings constructed in \cite{prdf4},
  specifically the rings $Q$ and $R$ of \S 8.4.  These are $W_2$-rings
  by (\cite{prdf4}, Lemma~8.23), and have burden $\le 2$ by
  (\cite{prdf4}, Theorem~10.24).
\end{itemize}

\section{Miscellaneous results}
We prove a few miscellaneous results.  In \S\ref{coarse:sec}, we show
that coarsenings of W-topologies are W-topologies.  Moreover, weight
must decrease in a strict coarsening.  This implies bounds on the
length of chains of coarsenings.

In \S\ref{vn:sec}, we consider topologies generated by $n$ independent
V-topologies.  We show that
\begin{itemize}
\item The class of such field topologies is a local class.  (This is
  probably well-known to experts, and useful in
  \S\ref{sec:inflators}.)
\item Every such topology has weight $n$.
\end{itemize}

Lastly, in \S\ref{approx:sec} we prove a very weak analogue of the
approximation theorem for V-topologies.  This may be useful in
attacking Conjecture~\ref{dream}.

\subsection{Coarsenings of W-topologies}\label{coarse:sec}
\begin{lemma}\label{coarse1:lem}
  Let $\tau, \tau'$ be two ring topologies on $K$, with $\tau'$
  coarser than $\tau$ (i.e., $\tau' \subseteq \tau$).  If $\tau$ is a
  $W_n$-topology, then $\tau'$ is a $W_n$-topology.\footnote{This
    looks easy, given that the local sentence appearing in
    Remark~\ref{local-class:rem} is preserved in coarsenings.  But the
    difficulty is showing that $\tau'$ is \emph{locally bounded} in
    the first place.  A coarsening of a locally bounded ring topology
    need not be locally bounded.  For example, the diagonal embedding
    of $\Qq$ into $\prod_p \Qq_p$ induces a field topology on $\Qq$
    that is not locally bounded.  This topology is a coarsening of the
    locally bounded ring topology $\tau_\Zz$ induced by $\Zz$.}
\end{lemma}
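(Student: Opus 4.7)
The plan is to decompose the conclusion ``$\tau'$ is a $W_n$-topology'' into two parts: (i) $\tau'$ satisfies the $W_n$ local sentence of Remark~\ref{local-class:rem}, and (ii) $\tau'$ is locally bounded. Part (i) transfers automatically from $\tau$ to $\tau'$: that sentence is universally quantified over $U \in \tau$, and because $\tau' \subseteq \tau$, the restriction of the quantifier to $U \in \tau'$ gives the $W_n$-sentence for $\tau'$. As the footnote emphasizes, the real content is (ii).

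For local boundedness, first pass to an $\omega$-complete locally equivalent triple $(K^*, \tau^*, (\tau')^*)$. The hypotheses (``$\tau$ is a $W_n$-topology'', ``$\tau'$ is a Hausdorff non-discrete ring topology'', and ``$\tau' \subseteq \tau$'', the last expressed as $\forall U \in \tau' ~ \exists V \in \tau : V \subseteq U$) are all local sentences, so this reduction is legitimate, and since being a $W_n$-topology is itself a local class (Remark~\ref{local-class:rem}), it suffices to prove the conclusion in the $\omega$-complete setting. There, Lemma~\ref{extend:lem} gives $\tau^* = \tau_R$ for some $W_n$-ring $R$ on $K^*$, whose integral closure decomposes by Proposition~\ref{iclose:prop} as $\tilde R = \Oo_1 \cap \cdots \cap \Oo_m$, an intersection of at most $n$ valuation rings.

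The natural candidate for a $(\tau')^*$-bounded $(\tau')^*$-neighborhood of 0 is $R' := \overline{R}^{(\tau')^*}$, the $(\tau')^*$-closure of $R$. One checks that $R'$ is a subring of $K^*$ (closures of subrings in ring topologies are subrings) and that $R'$ is $(\tau')^*$-bounded, using that $R$ is $\tau^*$-bounded and hence $(\tau')^*$-bounded, together with the standard fact that closed neighborhoods form a basis, so that $c \overline{R} = \overline{cR} \subseteq W$ whenever $cR \subseteq W$ for closed $W$. Granting that $R'$ is itself a $(\tau')^*$-neighborhood of 0, Proposition~\ref{gap:prop} applied to $(\tau')^*$ with bounded neighborhood $R'$ yields $(\tau')^* = \tau_{R'}$; since $R' \supseteq R$, Lemma~\ref{super:lem} forces $R'$ to be a $W_n$-ring, and hence $(\tau')^*$ and $\tau'$ are $W_n$-topologies.

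The hardest step is showing that $R' = \overline{R}^{(\tau')^*}$ is a $(\tau')^*$-neighborhood of 0. The footnote's counterexample (where $R = \Zz$ has infinitely many maximal ideals and the analogous closure collapses to $\Zz$, which is not a neighborhood in the coarser topology) confirms that the $W_n$-hypothesis must be used essentially here. The argument should exploit the semilocal structure of $R$ and the finite decomposition $\tilde R = \Oo_1 \cap \cdots \cap \Oo_m$, presumably via an approximation-style argument among the finitely many valuation rings above $R$, combined with $\omega$-completeness, to produce a $(\tau')^*$-neighborhood of 0 contained in $R'$.
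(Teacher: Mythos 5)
Your proposal has a genuine gap at exactly the point the footnote warns about. Everything up to and including the reduction to the $\omega$-complete case, the construction of the $W_n$-ring $R$ with $\tau^* = \tau_R$, and the verification that $\overline{R}^{(\tau')^*}$ is a bounded subring (if it is a neighborhood at all) is fine and routine. But the statement you yourself flag as the hardest step --- that the $(\tau')^*$-closure of $R$ contains a $(\tau')^*$-neighborhood of $0$, i.e.\ that $\tau'$ is locally bounded --- is never proved; ``presumably via an approximation-style argument among the finitely many valuation rings above $R$'' is a hope, not an argument. Note in particular that the decomposition $\tilde{R} = \Oo_1 \cap \cdots \cap \Oo_m$ does not obviously help: the topologies $\tau_{\Oo_i}$ are coarsenings of $\tau$, but a priori they need not be comparable to $\tau'$, so there is no approximation theorem available relating $\tau'$ to the $\Oo_i$. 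It is also not clear that the closure of $R$ is the right object: even assuming the lemma, whether $R$ is $(\tau')^*$-dense in some $(\tau')^*$-bounded open subring requires a separate argument (consider, e.g., whether a diffeovaluation ring is dense in the valuation ring of its V-topological coarsening). So the proposal reduces the lemma to an unproved claim that carries all of its content.

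For comparison, the paper's proof fills this hole with two ingredients you do not have. First, using $\omega$-completeness and the $\tau'$-boundedness of $R$, it shows (Claim~\ref{module-down}) that every $\tau'$-neighborhood of $0$ contains a smaller one that is an $R$-submodule of $K$; thus the $R$-submodule neighborhoods form a basis of $\tau'$. Second, this family, together with $\{0\}$, is a golden lattice of rank $\le n$ inside $\Sub_R(K)$, and Theorem~\ref{gold:thm} (via the guarding machinery of Lemmas~\ref{petal:lem}--\ref{stuffsack:lem}) shows that such a lattice of subgroups always defines a \emph{locally bounded} $W_n$-topology; comparing with Claim~\ref{module-down} gives $\tau' = \tau''$, hence $\tau'$ is a $W_n$-topology. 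Some argument of this kind --- producing a bounded $\tau'$-neighborhood from the finite cube rank of $\Sub_R(K)$ --- is what your proposal is missing, and without it the proof is incomplete.
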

\begin{proof}
  Suppose not.  There are local sentences expressing that $\tau'$ is
  coarser than $\tau$, that $\tau$ is a $W_n$-topology, and that
  $\tau'$ is \emph{not} a $W_n$-topology.  By Theorem~1.1 (and
  Remark~1.5) of \cite{PZ}, we may assume that $(K,\tau,\tau')$ is
  $\omega$-complete.  By Lemma~\ref{extend:lem}, $\tau$ is induced by a $W_n$-ring $R$
  on $K$.

  We claim that $R$ is $\tau'$-bounded.  Indeed, if $U \in \tau'$,
  then $U \in \tau$, so there is $c \in K^\times$ such that $cR
  \subseteq U$.  By Lemma~2.1(d) in \cite{PZ}, the $\tau'$-boundedness
  of $R$ means that
  \begin{equation}
    \forall U \in \tau' ~ \exists V \in \tau' : V \cdot R \subseteq U. \label{dag}
  \end{equation}
  \begin{claim}\label{module-down}
    For every $U \in \tau'$, there exists smaller $M \in \tau'$ such
    that $M$ is an $R$-submodule of $K$.
  \end{claim}
  \begin{claimproof}
    Define a descending sequence of $\tau'$-neighborhoods $U = U_0
    \supseteq U_1 \supseteq U_2 \supseteq \cdots$, choosing $U_{n+1}$
    small enough that
    \begin{equation*}
      U_{n+1} \cup (U_{n+1} - U_{n+1}) \cup (U_{n+1} \cdot R) \subseteq U_n.
    \end{equation*}
    This is possible using (\ref{dag}) and the fact that $\tau'$ is a
    group topology on $(K,+)$.  Let $M = \bigcap_n U_n$.  Then $M
    \subseteq U_0 = U$, and $M \in \tau'$ by $\omega$-completeness.
    Lastly, $M$ is an $R$-submodule of $K$ by construction.
  \end{claimproof}
  Let $\Lambda^+$ be the set of all $R$-submodules of $K$ which are
  $\tau'$-neighborhoods of 0.  Let $\Lambda = \Lambda^+ \cup \{0\}$.
  Then $\Lambda$ is a golden lattice (Definition~\ref{gold:def}):
  \begin{itemize}
  \item $\Lambda^+$ is clearly closed under intersections and joins,
    proving the Lattice and Intersection Axioms.
  \item The Scaling Axiom holds because $\tau'$ is a ring topology.
  \item The Rank Axiom holds because $\Sub_R(M)$ has finite rank, and
    $\Lambda$ is a sublattice.
  \item The Non-degeneracy Axiom holds by applying
    Claim~\ref{module-down} to any neighborhood $U \in \tau'$ strictly
    smaller than $K$.
  \end{itemize}
  By Theorem~\ref{gold:thm}, $\Lambda^+$-defines a $W_n$-topology
  $\tau''$ on $K$.  Then $\tau'' \subseteq \tau'$, by definition of
  $\Lambda^+$.  On the other hand, $\tau' \subseteq \tau''$ by
  Claim~\ref{module-down}.  Thus $\tau'$ is a $W_n$-topology, a
  contradiction.
\end{proof}

\begin{lemma}\label{bump:lem}
  Let $R \subseteq R'$ be two rings on $K = \Frac(R)$.  If $R$ is a
  $W_n$-ring, then
  \begin{itemize}
  \item $R'$ is a $W_{n-1}$-ring, \emph{or}
  \item $R$ and $R'$ are co-embeddable.
  \end{itemize}
\end{lemma}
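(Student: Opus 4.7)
The plan is to prove the contrapositive: assume $R'$ is not a $W_{n-1}$-ring, and deduce that $R' \subseteq c R$ for some $c \in K^\times$, which together with the automatic inclusion $R \subseteq R'$ yields co-embeddability.

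By Lemma~\ref{kr:lem} applied to $R'$, there exist $m_1, \ldots, m_n \in R'$ with $m_i \notin \sum_{j \ne i} R' m_j$ for each $i$. Since $R \subseteq R'$, Lemma~\ref{ring-change:lem} yields $m_i \notin \sum_{j \ne i} R m_j$ as well, so the same tuple witnesses $\crk_R(R') \ge n$. Combined with $\crk_R(R') \le \crk_R(K) = n$ from the hypothesis, we conclude $\crk_R(R') = n$, and the $m_i$ form a maximal $R$-independent family in $K$.

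Next, for any $x \in R'$, apply the bound $\crk_R(K) \le n$ to the $(n+1)$-tuple $(m_1, \ldots, m_n, x)$. Either $x$ lies in $M := R m_1 + \cdots + R m_n$ directly, or some index $i \le n$ yields $m_i = \alpha x + \sum_{j \ne i} \beta_j m_j$ with $\alpha, \beta_j \in R$; the $R$-independence of the $m_i$ forces $\alpha \ne 0$, giving $\alpha x \in M$. Since $M$ is a finitely generated $R$-submodule of $K$, a common-denominator argument supplies $c_0 \in K^\times$ with $M \subseteq c_0 R$, so every $x \in R'$ admits some nonzero $\alpha \in R$ with $\alpha x \in c_0 R$.

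The main obstacle is upgrading this pointwise statement to a uniform containment $R' \subseteq c R$. I would argue by contradiction: if $R'$ were not $\tau_R$-bounded, then for every $c$ one could find $x \in R' \setminus cR$, requiring a multiplier $\alpha$ lying in an arbitrarily small ideal of $R$. The $R'$-independence of the $m_i$ (strictly stronger than their $R$-independence, which we already used) should then let one refine such an $x$ into an $m_{n+1} \in R'$ making $(m_1, \ldots, m_n, m_{n+1})$ an $R$-independent $(n+1)$-tuple in $K$, contradicting $\crk_R(K) = n$. The delicate point is converting ``$R'$ not contained in any $cR$'' into the existence of such an $m_{n+1}$ without disturbing the $R'$-independence of the original tuple; I expect this to require a careful lattice-theoretic argument in $\Sub_R(K)$ exploiting the finite generation of $M$ and the maximality of its cube rank.
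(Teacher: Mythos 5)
Your reduction is aimed at the right target (given $R \subseteq R'$, co-embeddability is exactly the existence of $c$ with $R' \subseteq cR$), and the opening steps are fine: an $R'$-independent tuple $m_1,\dots,m_n \in R'$ exists by Lemma~\ref{kr:lem}, it stays $R$-independent by Lemma~\ref{ring-change:lem}, and hence $\crk_R(K)=n$. But the middle paragraph carries no content: the statement ``every $x\in R'$ admits a nonzero $\alpha\in R$ with $\alpha x\in c_0R$'' is true of \emph{every} $x\in K$ for trivial reasons, since $K=\Frac(R)$ (write $x=p/q$ with $p,q\in R$ and take $\alpha=q$). So the entire content of the lemma sits in the ``main obstacle'' you name, the uniform containment, and there your argument is only a sketch. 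To run your contradiction you would need, from ``$R'\not\subseteq cR$ for all $c$,'' an element $m_{n+1}\in R'$ with $m_{n+1}\notin\sum_{j}Rm_j$ \emph{and} $m_i\notin Rm_{n+1}+\sum_{j\ne i}Rm_j$ for each $i\le n$; the second family of conditions is precisely the delicate point you flag, and it is not carried out. As written, the proposal is an incomplete proof with the gap located at the one step that matters.

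For comparison, the paper closes exactly this gap with the golden-lattice machinery of \S\ref{sec:gold}: after reducing $n$ to $\wt(R)$, both $\Sub_R(K)$ and $\Sub_{R'}(K)$ are golden lattices of rank $n$; by Lemma~\ref{petal:lem} there is a nonzero $R'$-submodule $A$ that is the base of a strict $n$-cube, so $\crk_R(K/A)=\crk_{R'}(K/A)=n$; Lemma~\ref{guard:lem} guards $A$ by a finite set $S$, and Lemma~\ref{stuffsack:lem} lets one scale $S$, hence $A$, into any nonzero submodule --- this is Lemma~\ref{pedbound:lem}, i.e.\ $A$ is bounded for both $\tau_R$ and $\tau_{R'}$. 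In particular $cA\subseteq R$ for some $c\ne 0$, and since $A\supseteq dR'$ for some $d\ne 0$, one gets $R'\subseteq (cd)^{-1}R$, the uniform bound you are missing. If you want to finish along your lines, you would essentially have to reprove the guarding argument: it is the ``careful lattice-theoretic argument in $\Sub_R(K)$'' you anticipate, and it uses modularity together with the equality $\crk_R(K/A)=\crk_R(K)$ to force any $R$-submodule containing $S$ to contain $A$; nothing softer (in particular, nothing extracted from the pointwise statement) will do.
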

\begin{proof}
  Decreasing $n$, we may assume $n = \wt(R) = \crk_R(K)$.  By
  Lemma~\ref{ring-change:lem}, $\crk_{R'}(K) \le n$, so we may assume
  $\wt(R') = \crk_{R'}(K) = n$.  Then we must show that $R$ and $R'$
  are co-embeddable.  Let $\Lambda$ and $\Lambda'$ be the lattices of
  $R$-submodules and $R'$-submodules of $K$.  Then $\Lambda$ and
  $\Lambda'$ are golden lattices (Definition~\ref{gold:def}).  Also,
  $\Lambda'$ is a sublattice of $\Lambda$.  By Lemma~\ref{petal:lem},
  there is $A \in \Lambda'$ such that $A$ is the base of a strict
  $n$-cube in $\Lambda'$.  Then $A$ is the base of a strict $n$-cube
  in $\Lambda$ as well.  Then \[ \crk_R(K/A) = \crk_{R'}(K/A) = n =
  \crk_R(K)= \crk_{R'}(K).\] By Lemma~\ref{pedbound:lem}, $A$ is a
  bounded neighborhood in both $\tau_R$ and $\tau_{R'}$.  Then $R$ is
  co-embeddable with $A$, and $A$ is co-embeddable with $R'$.
\end{proof}

\begin{proposition}\label{bump:prop}
  If $\tau$ is a $W_n$-topology on a field $K$, and $\tau'$ is a
  strict coarsening, then $\tau'$ is a $W_{n-1}$-topology on $K$.
\end{proposition}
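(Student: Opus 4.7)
The plan is to transfer the problem to rings via Lemma~\ref{extend:lem}, then squeeze $\tau'$ between $\tau$ and a strict over-ring, so that Lemma~\ref{bump:lem} forces the weight to drop.

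First I would verify that all the relevant hypotheses are cut out by local sentences, so that I may invoke the $\omega$-completion machinery of Prestel--Ziegler. The conditions ``$\tau$ is a $W_n$-topology'' and ``$\tau'$ is not a $W_{n-1}$-topology'' are local by Remark~\ref{local-class:rem} (the negation is obtained by trading quantifiers, and the free-variable sort $\tau$ still occurs with the correct polarity because it sits under an existential and appears only negatively). The refinement condition $\tau' \subseteq \tau$ is the local sentence $\forall V \in \tau' ~ \exists U \in \tau : U \subseteq V$, and strictness is captured by $\exists U \in \tau ~ \forall V \in \tau' ~ \exists x \in V : x \notin U$, in which $U$ appears negatively under its existential and $V$ appears positively under its universal. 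Using Theorem~1.1 and Remark~1.5 of \cite{PZ}, I may therefore assume that $(K,\tau,\tau')$ is $\omega$-complete without losing any of these hypotheses.

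Next, by Lemma~\ref{coarse1:lem}, $\tau'$ is already a $W_n$-topology, hence locally bounded. Apply Lemma~\ref{extend:lem} to $\tau$ to obtain a $W_n$-ring $R$ on $K$ inducing $\tau$. Because $\tau' \subseteq \tau$, every $\tau'$-neighborhood of $0$ contains a $\tau$-neighborhood of $0$, and hence contains some rescaling $cR$; this shows that $R$ is $\tau'$-bounded. Apply Lemma~\ref{extend:lem} to $\tau'$ with the distinguished bounded set $S = R$ to produce a $W_n$-ring $R' \supseteq R$ on $K$ inducing $\tau'$.

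Finally, Lemma~\ref{bump:lem} applied to $R \subseteq R'$ gives the dichotomy: either $R'$ is a $W_{n-1}$-ring, in which case $\tau' = \tau_{R'}$ is a $W_{n-1}$-topology and we are finished, or $R$ and $R'$ are co-embeddable, in which case they induce the same topology, forcing $\tau = \tau'$ and contradicting strict coarsening. The only nontrivial obstacle I anticipate is the bookkeeping in the first paragraph: making sure that ``strictly coarser'' can be preserved through local equivalence simultaneously with the $W_n$/non-$W_{n-1}$ conditions, so that the $\omega$-complete reduction is legitimate. Once that is in place, the argument is just a concatenation of lemmas already proven.
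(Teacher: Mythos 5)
Your proof is correct and follows essentially the same route as the paper's: pass to an $\omega$-complete locally equivalent structure, realize $\tau$ by a $W_n$-ring $R$, note that $R$ is $\tau'$-bounded so Lemma~\ref{extend:lem} yields a superring $R' \supseteq R$ inducing $\tau'$, and then apply the dichotomy of Lemma~\ref{bump:lem}, with strictness ruling out the co-embeddable case. Your explicit local-sentence bookkeeping and the appeal to Lemma~\ref{coarse1:lem} to guarantee $\tau'$ is locally bounded merely spell out steps the paper leaves implicit.
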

\begin{proof}
  As in Lemma~\ref{coarse1:lem}, we may assume $(K,\tau,\tau')$ is
  $\omega$-complete.  Then $\tau$ is induced by a $W_n$-ring $R$.  The
  ring $R$ is $\tau'$-bounded.  By Lemma~\ref{extend:lem}, $\tau'$ is
  induced by some superring $R' \supseteq R$.  Then
  Lemma~\ref{bump:lem} implies one of the following:
  \begin{itemize}
  \item $R'$ is a $W_{n-1}$-ring, implying that $\tau'$ is a
    $W_{n-1}$-ring.
  \item $R$ and $R'$ are co-embeddable, implying that $\tau =
    \tau'$. \qedhere
  \end{itemize}
\end{proof}

\subsection{$V^n$-topologies} \label{vn:sec}
\begin{definition}
  A \emph{$V^n$-topology} on $K$ is a locally bounded ring topology $\tau$
  such that the following local sentence holds: there are distinct
  $q_1, \ldots, q_n \in K$ such that for any $U \in \tau$, there is $c
  \in K^\times$ such that for all $x \in K$,
  \begin{equation*}
    (\{x\} \cup \{1/(x - q_i) : 1 \le i \le n\}) \cap cU \ne \emptyset.
  \end{equation*}
\end{definition}
\begin{remark}\label{v:rem}
  An equivalent condition is that there is a bounded neighborhood $U
  \ni 0$ and elements $q_1, \ldots, q_n$ such that for every $x \in
  K$,
  \begin{equation*}
    (\{x\} \cup \{1/(x - q_i) : 1 \le i \le n\}) \cap U \ne \emptyset.
  \end{equation*}
\end{remark}
Note that $V^n$-topologies form a local class.
\emph{Non}-$V^n$-topologies form a local class as well.
\begin{lemma}\label{v1:lem}
  If $R$ is an intersection of $n$ valuation rings on $K$, then $R$ induces a
  $V^n$-topology on $K$.
\end{lemma}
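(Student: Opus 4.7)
The plan is to exhibit distinct elements $q_1, \ldots, q_n \in K$ and a bounded neighborhood $U$ of $0$ in $\tau_R$ such that, for every $x \in K$, either $x \in U$ or $1/(x - q_i) \in U$ for some $i$; this suffices by Remark~\ref{v:rem}.

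First I would reduce to the case in which the $\Oo_i$ are pairwise incomparable. If $\Oo_i \subseteq \Oo_j$, then dropping $\Oo_j$ leaves $R$ (and hence $\tau_R$) unchanged. This may decrease $n$, but a $V^m$-topology is automatically a $V^n$-topology for any $m \le n$ (pad the list $q_1, \ldots, q_m$ with arbitrary new distinct elements; the disjunction only weakens), so the reduction is safe. After the reduction, $v_1, \ldots, v_n$ are pairwise independent and the Artin--Whaples approximation theorem applies.

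In the generic case, where every residue field $k_j := \Oo_j/\mm_j$ has at least $n$ elements, I would use approximation to pick $q_1, \ldots, q_n \in R^\times$ with pairwise distinct residues in each $k_j$, and take $U = R$. For $x \notin R$, set $J := \{j : v_j(x) < 0\} \ne \emptyset$ and analyze $1/(x - q_i)$ at each valuation. At $j \in J$, $v_j(q_i) = 0 > v_j(x)$ forces $v_j(x - q_i) = v_j(x) < 0$, so $v_j(1/(x - q_i)) > 0$. At $j \notin J$ with $v_j(x) > 0$, $v_j(x - q_i) = 0$. At $j \notin J$ with $v_j(x) = 0$, the obstruction $v_j(1/(x - q_i)) < 0$ can occur only when $\bar{x}^{(j)} = \bar{q}_i^{(j)}$, which by pairwise distinctness of residues afflicts at most one $i$ per such $j$. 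Since $|J^c| \le n - 1$, a pigeonhole argument produces one $i$ that is unobstructed at every $j \notin J$, giving $1/(x - q_i) \in R = U$ as required.

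The main obstacle is the case of small residue fields, where $|k_j| < n$ at some $j$ and the naive residue pigeonhole fails. I would address this by enlarging $U$ to $cR$ for some $c \in K^\times$ with $v_j(c) < 0$ at the small-residue places (again available by approximation), and by allowing some $q_i$'s to have negative $v_j$-valuations at those places. The enlarged $U$ absorbs cancellation coming from residue coincidence at the small-residue places, and a refined pigeonhole argument that classifies the $q_i$'s by valuation type rather than residue class at such $j$ still produces a good $i$. Harmonizing these two strategies across all valuations simultaneously is the technical heart of the argument.
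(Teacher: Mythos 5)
Your generic-case pigeonhole is essentially the right combinatorial fact (it is, in substance, the content of Lemma~5.24 in \cite{prdf3}, which the paper cites), and the reduction to pairwise incomparable $\Oo_i$ is harmless. But there are two genuine problems. First, incomparability does \emph{not} imply independence: two incomparable refinements of a common nontrivial valuation ring are dependent, and Artin--Whaples approximation fails for them, so your appeal to it is unjustified. What you actually need (prescribing residues, or membership in the maximal ideals, at each place) is available by a different route: for pairwise incomparable $\Oo_j$ the intersection $R$ has exactly $n$ maximal ideals $\mm_j$ with $R_{\mm_j}=\Oo_j$ and $R/\mm_j\cong k_j$, and the Chinese remainder theorem makes $R\to\prod_j k_j$ surjective. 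So this step is repairable, but as written it is an error.

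Second, and more seriously, the small-residue-field case is not proved; it is exactly the crux, and ``a refined pigeonhole argument still produces a good $i$'' is a hope rather than an argument. The paper sidesteps this difficulty entirely: since non-$V^n$-topologies form a local class, it passes to a locally equivalent $\omega$-complete model, where any countable subfield $K_0$ is bounded, and replaces $R$ by the ring $R'$ generated by $R$ and $K_0$. By Proposition~\ref{gap:prop} this does not change the topology, $R'$ is again an intersection of at most $n$ valuation rings, and each of their residue fields now contains a copy of $K_0$, hence is infinite; then any $n$ distinct $q_i\in K_0$ work (Lemma~5.24 in \cite{prdf3}). Note that this enlargement cannot be carried out naively in $K$ itself, because a subfield of $K$ need not be $\tau_R$-bounded --- that is precisely why the local-equivalence/$\omega$-completeness machinery is invoked first. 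If you want to keep your direct approach, you must either actually carry out the harmonized analysis for small $k_j$ (with $U=cR$ and $q_i$ of mixed valuation type), which you have not done, or import the paper's reduction to infinite residue fields via local equivalence.
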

\begin{proof}
  First of all, $R$ defines a locally bounded field topology because
  $\Frac(R) = K$ by (\cite{prdf2}, Proposition~6.2(3)).  Suppose $\tau_R$ fails to
  be a $V^n$-topology.  Passing to an elementary extension, we may
  assume that $\tau_R$ is $\omega$-complete.  Let $K_0$ be a subfield
  of $K$ of size $\aleph_0$.  Then $K_0$ is bounded, by
  $\omega$-completeness.  Let $R'$ be the ring generated by $R$ and
  $K_0$.  This ring continues to define $\tau_R$, by
  Proposition~\ref{gap:prop}.  Also $R'$ is a multivaluation ring, by
  Proposition~6.10 in \cite{prdf2}.  If $q_1, \ldots, q_n$ are arbitrary distinct
  elements of $K_0$, and if $x \in K$, then
  \begin{equation*}
    (\{x\} \cup \{1/(x - q_i) : 1 \le i \le n\}) \cap R' \ne \emptyset,
  \end{equation*}
  by Lemma~5.24 in \cite{prdf3}.
\end{proof}

\begin{lemma}\label{v2:lem}
  If $(K,\tau)$ is an $\omega$-complete $V^n$-topology, then
  $(K,\tau)$ is induced by a ring $R$ that is an intersection of $n$
  valuation rings on $K$.
\end{lemma}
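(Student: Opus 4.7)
The plan is to mimic the proof of Lemma~\ref{v1:lem}: extract a concrete bounded subring $R$ from $\tau$ via Proposition~\ref{gap:prop}, use the $V^n$-condition to show $R$ has cube rank at most $n$, and then invoke Proposition~\ref{iclose:prop} to describe its integral closure.

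First, by Remark~\ref{v:rem}, fix a bounded neighborhood $U \ni 0$ and distinct $q_1, \ldots, q_n \in K$ such that $(\{x\} \cup \{1/(x-q_i) : 1 \le i \le n\}) \cap U \ne \emptyset$ for every $x \in K$. After replacing $U$ by the still-bounded set $U \cup \{q_1, \ldots, q_n\}$, Proposition~\ref{gap:prop} produces a bounded subring $R \subseteq K$ with $\Frac(R) = K$, containing $U$ and each $q_i$, and inducing $\tau$. Since $U \subseteq R$, the $V^n$-condition sharpens to: for every $x \in K$, either $x \in R$ or $1/(x - q_i) \in R$ for some $i \in \{1, \ldots, n\}$.

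The central step is to show that $R$ is a $W_n$-ring, i.e., $\crk_R(K) \le n$. Given $a_0, \ldots, a_n \in K^\times$, I would apply the $V^n$-condition to each ratio $a_j/a_0$ for $j = 1, \ldots, n$. If some $a_j/a_0 \in R$, we immediately obtain $a_j \in R \cdot a_0 \subseteq \sum_{i \ne j} R \cdot a_i$. Otherwise, for each $j$ there is $k(j) \in \{1, \ldots, n\}$ with $r_j := a_0/(a_j - q_{k(j)} a_0) \in R$, giving the identity $r_j a_j = (1 + q_{k(j)} r_j) a_0$. If any factor $1 + q_{k(j)} r_j$ were a unit of $R$, we would immediately get $a_0 \in R \cdot a_j$; so we may assume every such factor is a non-unit. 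A pigeonhole argument on $k : \{1, \ldots, n\} \to \{1, \ldots, n\}$ then provides either a collision $k(j_1) = k(j_2) = k$ or a bijection; in either case one combines the resulting identities (clearing denominators, and exploiting that an element congruent to $-1$ modulo a maximal ideal must be a unit away from that ideal) to force a genuine $R$-linear dependence $a_i \in \sum_{j \ne i} R \cdot a_j$, contradicting independence.

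Once $R$ is known to be a $W_n$-ring, Proposition~\ref{iclose:prop} gives that the integral closure $\tilde R$ is an intersection of at most $n$ valuation rings on $K$. To finish, one verifies that $\tilde R$ still induces $\tau$: Lemma~\ref{bump:lem} applied to $R \subseteq \tilde R$ shows that either $R$ and $\tilde R$ are co-embeddable (in which case they induce the same topology), or $\wt(\tilde R) < \wt(R)$, in which case $\tilde R$ is already an intersection of fewer than $n$ valuation rings and one iterates the construction at a lower rank. Alternatively, the $\vee$-definability of $\tilde R$ (Proposition~\ref{ic:prop}) together with the $\omega$-completeness of $\tau$ shows directly that $\tilde R$ is $\tau$-bounded, hence induces $\tau$.

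The main obstacle will be the weight bound $\crk_R(K) \le n$: the combinatorial-algebraic manipulation of the $V^n$-relations is delicate precisely because the factors $1 + q_{k(j)} r_j$ need not be units of $R$, so one cannot simply invert them. The pigeonhole structure of $k$ together with the semilocal structure of $R$ (which is itself a consequence of the target statement, so must be sidestepped) will need to be exploited carefully.
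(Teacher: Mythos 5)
Your proposal diverges from the paper at the crucial point, and the divergence opens a real gap. The paper's proof is short: using $\omega$-completeness it adjoins a countable \emph{subfield} $K_0$ containing the $q_i$ (bounded, hence absorbable into the ring given by Proposition~\ref{gap:prop}), and then cites Lemma~5.24 of \cite{prdf3}, which says that a $K_0$-algebra $R$ such that for every $x$ at least one of $x, 1/(x-q_1), \ldots, 1/(x-q_n)$ lies in $R$ is itself an intersection of $n$ valuation rings. So $R$ \emph{is} the desired multivaluation ring; no integral closure is taken, and the whole content is in that cited characterization. You instead aim only for the weaker conclusion $\crk_R(K) \le n$ and then try to pass to the integral closure. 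Two problems. First, your central step is only a sketch: the pigeonhole manipulation of the relations $r_j a_j = (1 + q_{k(j)} r_j) a_0$ is not carried out, and you yourself note that it leans on the semilocal structure of $R$, which you do not have at that point; as written this is not a proof.

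Second, and more seriously, the endgame fails even if the weight bound were established. From $R \subseteq \tilde R$ you only get that $\tilde R$ induces a \emph{coarser} topology, and "$R$ is a $W_n$-ring'' does not imply $\tilde R$ is co-embeddable with $R$: the diffeovaluation rings of \cite{prdf4} are $W_2$-rings of characteristic $0$ that are not co-embeddable with any multivaluation ring (this is exactly the content of Lemma~\ref{dv:lem}, via Lemmas~8.23 and 8.29 of \cite{prdf4}), so for them the integral closure induces a strictly coarser topology. Your two proposed fixes do not close this: the dichotomy of Lemma~\ref{bump:lem} only gives co-embeddability when the weight does \emph{not} drop, and when it does drop, "iterating at lower rank'' produces a ring inducing a strictly coarser topology, not $\tau$; and the alternative claim that $\vee$-definability of $\tilde R$ (Proposition~\ref{ic:prop}) plus $\omega$-completeness makes $\tilde R$ $\tau$-bounded is a non sequitur --- $\vee$-definability concerns a saturated ambient structure, which is not present, and boundedness of $\tilde R$ is precisely what can fail. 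To rule out the DV-type behaviour you must feed the $V^n$-condition itself (not merely the weight) back into the argument and show directly that $R$ is an intersection of $n$ valuation rings --- which is exactly what the paper's appeal to Lemma~5.24 of \cite{prdf3} accomplishes and what your proposal would, in effect, have to re-prove.
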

\begin{proof}
  Let $U$ be a bounded neighborhood, and $q_1, \ldots, q_n$ be
  as in Remark~\ref{v:rem}, so that for any $x \in K$,
  \begin{equation*}
    (\{x\} \cup \{1/(x - q_i) : 1 \le i \le n\}) \cap U \ne \emptyset.
  \end{equation*}
  Let $K_0$ be a countable subfield containing the $q_i$.  Then $K_0$
  is bounded, by $\omega$-completeness.  By
  Proposition~\ref{gap:prop}, $\tau$ is induced by some subring $R$
  containing $U$ and $K_0$.  Then $R$ is a $K_0$-algebra, and for
  every $x \in K$, at least one of the numbers
  $x,1/(x-q_1),\ldots,1/(x-q_n)$ lies in $R$.  Then $R$ is an
  intersection of $n$ valuation rings, by Lemma~5.24 in \cite{prdf3}.
\end{proof}

\begin{corollary}\label{vbasic:cor}
  Let $K$ be a field with a ring topology $\tau$.
  \begin{enumerate}
  \item $\tau$ is a $V^n$-topology if and only if $(K,\tau)$ is
    locally equivalent to a field with a topology induced by an
    intersection of $n$ valuation rings.
  \item If $\tau$ is a $V^n$-topology, then $\tau$ is a
    $W_n$-topology, and hence a field topology.
  \item If $\tau$ is a $V^n$-topology, then $\tau$ is a $V^m$-topology
    for $m > n$.
  \item $\tau$ is a $V^1$-topology if and only if $\tau$ is a
    V-topology.
  \end{enumerate}
\end{corollary}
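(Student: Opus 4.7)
The plan is to deduce parts (1)--(4) in close parallel to how Corollary~\ref{basic:cor} was extracted from Lemma~\ref{extend:lem}, using Lemmas~\ref{v1:lem} and \ref{v2:lem} as the technical engine together with the local-class observation made just after Remark~\ref{v:rem}.

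For part (1), I would argue the forward direction as follows. Suppose $\tau$ is a $V^n$-topology on $K$. By Theorem~1.1 (with Remark~1.5) of \cite{PZ}, $(K,\tau)$ is locally equivalent to some $\omega$-complete topological field $(K',\tau')$, which is still a $V^n$-topology because the class is local. Then Lemma~\ref{v2:lem} produces a ring $R'\subseteq K'$, an intersection of $n$ valuation rings, inducing $\tau'$. For the reverse direction, if $(K,\tau)$ is locally equivalent to a field whose topology is induced by an intersection of $n$ valuation rings, that model is a $V^n$-topology by Lemma~\ref{v1:lem}, and the local-class property transfers back to $\tau$.

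For part (2), I would apply (1) to write $\tau$ as locally equivalent to $\tau_R$ for some $R=\Oo_1\cap\cdots\cap\Oo_n$. By Proposition~\ref{multi:prop}, $\wt(R)\le n$, so $R$ is a $W_n$-ring and $\tau_R$ is a $W_n$-topology. Since the class of $W_n$-topologies is local (Remark~\ref{local-class:rem}), $\tau$ itself is a $W_n$-topology, hence a field topology by Corollary~\ref{basic:cor}(2). Part (3) is immediate from the defining local sentence: given witnesses $q_1,\ldots,q_n$ for the $V^n$-property, pick additional distinct points $q_{n+1},\ldots,q_m\in K$ (which exist since non-discreteness forces $K$ to be infinite); then the same $c$ still works and the intersection condition can only get easier when the list of $q_i$'s is enlarged. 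Part (4) is the $n=1$ case of part (1), combined with the standard characterization (implicit in Theorem~2.2 of \cite{PZ}, and matching Corollary~\ref{basic:cor}(4) via the identification of $W_1$-topologies with V-topologies) that V-topologies are precisely those locally equivalent to topologies induced by a single valuation ring.

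The proof encounters no real obstacle; all the content resides in Lemmas~\ref{v1:lem} and \ref{v2:lem}. The only point that requires mild care is the verification that being a $V^n$-topology is preserved under local equivalence, so that one can pass freely between $(K,\tau)$ and an $\omega$-complete companion and back. This is visible from the form of the sentence in the definition: the quantifier $\forall U\in\tau$ occurs with $U$ positive, and the existential $\exists c$ introduces no $\tau$-quantifier, so the whole condition is a local sentence in the sense of \cite{PZ}.
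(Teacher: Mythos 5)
Your proposal is correct and follows essentially the same route the paper intends: the corollary is left as an immediate consequence of Lemmas~\ref{v1:lem} and \ref{v2:lem}, combined with the observation that the $V^n$-condition is a local sentence and with Theorem~1.1/Remark~1.5 of \cite{PZ} to pass to an $\omega$-complete companion and back. Your handling of parts (2)--(4) (weight bound via Proposition~\ref{multi:prop} after discarding comparable valuation rings, monotonicity in $n$ by enlarging the set of $q_i$'s, and the $n=1$ identification with V-topologies) is exactly the intended reading.
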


\begin{proposition}
  If $\tau_1, \ldots, \tau_n$ are distinct V-topologies on a field
  $K$, and $\tau$ is the topology generated by $\tau_1, \ldots,
  \tau_n$ (as in Corollary~4.3 of \cite{PZ}), then $\tau$ is a
  $V^n$-topology.  On the other hand, $\tau$ is not a
  $W_{n-1}$-topology, and therefore not a $V^{n-1}$-topology.
\end{proposition}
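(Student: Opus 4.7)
The plan is to handle the two claims separately. The positive claim (that $\tau$ is a $V^n$-topology) reduces to exhibiting, in an $\omega$-complete locally equivalent model, a multivaluation ring $R$ whose induced topology is $\tau$, after which Lemma~\ref{v1:lem} applies. The negative claim is immediate from Theorem~\ref{defcourse}, using that each $\tau_i$ is itself a V-topological coarsening of $\tau$.

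In more detail, both the hypothesis ``$\tau$ is generated by $\tau_1, \ldots, \tau_n$'' and the conclusion ``$\tau$ is a $V^n$-topology'' are expressible by local sentences, so by (\cite{PZ}, Theorem~1.1 and Remark~1.5) I may pass to an $\omega$-complete locally equivalent multi-topological field. There each $\tau_i$ is induced by a valuation ring $\Oo_i$, and the $\Oo_i$ are pairwise incomparable because the $\tau_i$ are pairwise distinct V-topologies. Set $R := \Oo_1 \cap \cdots \cap \Oo_n$, which is a ring of weight $n$ by Proposition~\ref{multi:prop}. I would then verify $\tau_R = \tau$: the inclusion $\tau_R \subseteq \tau$ is clear since $cR = \bigcap_i c\Oo_i$ is an intersection of $\tau_i$-neighborhoods; the reverse inclusion follows from the approximation theorem for pairwise incomparable valuations, which provides, given $c_1, \ldots, c_n \in K^\times$, some $c \in K^\times$ with $v_i(c) \geq v_i(c_i)$ for all $i$, so that $cR \subseteq c_1 \Oo_1 \cap \cdots \cap c_n \Oo_n$. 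Lemma~\ref{v1:lem} then gives that $\tau_R$, hence $\tau$ by local equivalence, is a $V^n$-topology.

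For the second assertion, each $\tau_i$ is coarser than the generated topology $\tau$, so $\tau$ admits at least $n$ distinct V-topological coarsenings, namely $\tau_1, \ldots, \tau_n$. By Theorem~\ref{defcourse}(\ref{p2}), a $W_{n-1}$-topology has at most $n-1$ V-topological coarsenings, so $\tau$ is not a $W_{n-1}$-topology. Since every $V^{n-1}$-topology is a $W_{n-1}$-topology by Corollary~\ref{vbasic:cor}(2), $\tau$ is not a $V^{n-1}$-topology either. The main obstacle is the approximation step identifying $\tau_R$ with the generated topology $\tau$; modulo this standard fact about independent valuations, the rest is a direct application of results already in place.
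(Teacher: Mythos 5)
Your proposal is correct, and the first half follows the paper's own route: pass to an $\omega$-complete locally equivalent structure, realize each $\tau_i$ by a valuation ring $\Oo_i$, and conclude via Lemma~\ref{v1:lem} and Corollary~\ref{vbasic:cor} applied to $R = \Oo_1 \cap \cdots \cap \Oo_n$; the paper merely asserts that ``one easily sees'' $\tau_R = \tau$, which you verify explicitly. One caution on that verification: the approximation theorem holds for pairwise \emph{independent} valuations, not merely pairwise incomparable ones (two incomparable refinements of a common nontrivial valuation do not satisfy it), so your citation is misstated as written. It is harmless here, since distinct V-topologies induced by valuations force the valuations to be independent (dependent valuations induce the same topology); alternatively, the containment you actually need --- some $c \ne 0$ with $v_i(c) \ge v_i(c_i)$ for all $i$ --- holds for any finite family of valuation rings: since $\Frac(R)=K$, write $c_i = a_i/b_i$ with $a_i, b_i \in R$ and take $c = a_1 \cdots a_n$. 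For the negative assertion your route genuinely differs from the paper's. The paper argues directly: if $\tau$ were a $W_{n-1}$-topology, a bounded $W_{n-1}$-set neighborhood $U$ would be bounded in each coarser $\tau_i$, so by Proposition~\ref{gap:prop} the $\Oo_i$ can be coarsened to contain $U$, making $R$ a $W_{n-1}$-ring and contradicting Proposition~\ref{multi:prop}. You instead note that $\tau_1, \ldots, \tau_n$ are $n$ distinct V-topological coarsenings of $\tau$ and apply Theorem~\ref{defcourse}(\ref{p2}). This is valid and not circular (that theorem precedes the proposition), and it is shorter; the trade-off is that it leans on the heavier machinery behind Theorem~\ref{defcourse}, whereas the paper's argument stays at the elementary level of bounded sets and multivaluation rings.
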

\begin{proof}
  As in the proof of Theorem~4.4 of \cite{PZ}, we may assume that
  $(K,\tau_1,\ldots,\tau_n,\tau)$ is $\omega$-complete.  Then each
  $\tau_i$ is induced by a valuation ring $\Oo_i$.  One easily sees
  that the topology $\tau$ is induced by $R = \Oo_1 \cap \cdots \cap
  \Oo_n$.  By Corollary~\ref{vbasic:cor}, $\tau$ is a $V^n$-topology.

  Now suppose that $\tau$ is a $W_{n-1}$-topology.  Then some
  $W_{n-1}$-set $U$ is a bounded neighborhood of 0 with respect to
  $\tau$.  The set $U$ is bounded with respect to the coarser
  topologies $\tau_i$.  By Proposition~\ref{gap:prop}, we may coarsen
  the $\Oo_i$ to ensure that $U \subseteq \Oo_i$ for each $i$.  Then
  $R = \Oo_1 \cap \cdots \cap \Oo_n$ contains $U$, hence is a
  $W_{n-1}$-ring.  This contradicts Proposition~\ref{multi:prop}.
  (The $\Oo_i$ are pairwise incomparable, because they are
  \emph{independent}.)
\end{proof}

\begin{proposition}
  Let $\tau$ be a $V^n$-topology on a field $K$.  Then $\tau$ is
  generated by $n$ or fewer V-topologies on $K$.
\end{proposition}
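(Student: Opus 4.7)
The strategy is to pass to an $\omega$-complete elementary extension, invoke Lemma~\ref{v2:lem} there, and descend via local equivalence. By Theorem~1.1 of \cite{PZ}, let $(K^*, \tau^*)$ be an $\omega$-complete elementary extension of $(K, \tau)$. Since $V^n$-topologies form a local class (Corollary~\ref{vbasic:cor}(1)), $\tau^*$ is still a $V^n$-topology, so by Lemma~\ref{v2:lem}, $\tau^* = \tau_{R^*}$ for some ring $R^* = \Oo_1 \cap \cdots \cap \Oo_n$, an intersection of at most $n$ valuation rings on $K^*$.

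In $(K^*, \tau^*)$ the proposition is easy: I check directly that $\tau^*$ is the topology generated by the V-topologies $\tau_{\Oo_1}, \ldots, \tau_{\Oo_n}$. Indeed, $R^* \subseteq \Oo_i$ gives $\tau_{\Oo_i} \subseteq \tau^*$ for each $i$, and conversely every basic neighborhood $cR^* = c\Oo_1 \cap \cdots \cap c\Oo_n$ of $\tau^*$ is an intersection of basic neighborhoods from the $\tau_{\Oo_i}$ and therefore lies in the generated topology.

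To descend to $(K, \tau)$, I express ``$\tau$ is generated by at most $n$ V-topologies on $K$'' as a conjunction of local sentences: there exist parameters $q_1, \ldots, q_n \in K$ and neighborhoods $U_1, \ldots, U_n \in \tau$ such that each $U_i$ satisfies the $V^1$ condition with parameter $q_i$ (i.e., for every $x \in K$ either $x \in U_i$ or $1/(x-q_i) \in U_i$), and for every $V \in \tau$ some rescaled intersection $c_1 U_1 \cap \cdots \cap c_n U_n$ sits inside $V$. The $\omega$-complete case provides the witnesses $U_i = \Oo_i$ and $q_i = 0$, using that $\Oo_i$ is a $\tau^*$-neighborhood of $0$ and that $\{x, 1/x\} \cap \Oo_i \ne \emptyset$ for every $x \in K^*$; by local equivalence, corresponding witnesses exist in $(K, \tau)$ as well.

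The main obstacle is formulating the descent so that the local sentence respects the positive/negative occurrence constraints of \cite{PZ}: the $U_i$ appear positively in the $V^1$ condition but negatively in the containment condition, so $\exists U_i \in \tau$ is not directly legal. A clean workaround is to enrich the language with $n$ additional topology sorts $\tau_1, \ldots, \tau_n$ (one per V-topological summand), express the conditions in the enriched language, and apply a multi-sort analogue of Prestel-Ziegler's Theorem~1.1, so that each $\tau_i$ is witnessed as an element of its own sort rather than quantified in the single ``$\tau$'' sort.
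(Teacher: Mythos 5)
Your reduction to the $\omega$-complete case is fine, and your verification there that $\tau^* = \tau_{R^*}$ is generated by $\tau_{\Oo_1},\ldots,\tau_{\Oo_n}$ is correct. The gap is in the descent. The target statement, ``there exist V-topologies $\tau_1,\ldots,\tau_n$ on $K$ generating $\tau$,'' quantifies over topologies on $K$; it is an existence-of-expansion ($\Sigma^1_1$-type) statement, not a local sentence, and you have correctly diagnosed that it cannot be written as one because the $U_i$ must occur both positively (in the $V^1$-condition) and negatively (in the containment condition). But the proposed repair does not close this hole: adding sorts $\tau_1,\ldots,\tau_n$ only helps if you already have a structure \emph{on $K$} interpreting those sorts, i.e.\ the V-topologies themselves, which is exactly what is to be proven. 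Prestel--Ziegler's Theorem~1.1 (multi-sorted or not) takes a \emph{given} topological structure and produces a locally equivalent $\omega$-complete one; it never produces an expansion of the original structure. From the fact that the locally equivalent $(K^*,\tau^*)$ admits an expansion by $n$ V-topologies satisfying suitable local sentences, you can only conclude that \emph{some} model of the local theory of $(K,\tau)$ admits such an expansion --- not that $(K,\tau)$ itself does, which is what the proposition asserts.

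The paper gets around this by trading local equivalence for genuine first-order elementary extensions plus definability. One fixes a bounded neighborhood $U$ witnessing the $V^n$-condition (Remark~\ref{v:rem}), passes to a saturated elementary extension $(K^*,U^*)$ of the first-order structure $(K,U)$, and shows that the ring generated by $U^*$ and the $q_i$ is a $\vee$-definable multivaluation ring inducing the topology (as in Lemma~\ref{v2:lem} and Proposition~\ref{gap:prop}); by Propositions~\ref{ic:prop}, \ref{loc:prop} and \ref{vee-to-def} its localizations are valuation rings inducing V-topologies that are \emph{definable} in $(K^*,U^*)$. The statement ``the topology induced by $U^*$ is generated by $m\le n$ definable V-topologies'' is then a disjunction of honest first-order sentences (one for each choice of defining formulas), so it transfers down the elementary extension to $(K,U)$, producing actual V-topologies on $K$ itself. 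Some such definability-and-transfer device (the same one used in Theorem~\ref{defcourse}) is needed in place of your multi-sorted local-equivalence step; without it the argument does not reach the original field $K$.
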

\begin{proof}
  Let $q_1, \ldots, q_n$ and $U$ be as in Remark~\ref{v:rem}, so $U$
  is a bounded neighborhood of 0 and
  \begin{equation*}
    \forall x \in K : U \cap \{x,1/(x-q_1),\ldots,1/(x-q_n)\} \ne \emptyset.
  \end{equation*}
  Let $(K^*,U^*)$ be a saturated elementary extension of $(K,U)$, and
  let $\tau^*$ be the topology induced by $U^*$.  Let $K_0$ be the
  countable subfield of $K$ generated by the $q_i$.  Let $R \subseteq
  K^*$ be the ring generated by $K_0$ and $U^*$.  Note that $R$ is
  $\vee$-definable over $K_0$.  As in the proof of Lemma~\ref{v2:lem},
  the ring $R$ is a multivaluation ring inducing $\tau^*$.  Therefore
  $R$ and $U^*$ are co-embeddable.  Let $\pp_1, \ldots, \pp_m$ be the
  maximal ideals of $R$; $m$ is the number of valuation rings needed
  to define $R$, so $m \le n$.  As in the proof of
  Theorem~\ref{defcourse}.\ref{p1}, each localization $R_{\pp_i}$ is a
  valuation ring, and the induced V-topology is definable in the
  structure $(K^*,U^*)$.  The resulting V-topologies generate
  $\tau^*$, because $R = \bigcap_i R_{\pp_i}$.  Then the statement
  \begin{quote}
    ``$\tau$ is the topology generated by $m$ distinct definable
    V-topologies in $(K^*,U^*)$''
  \end{quote}
  can be expressed by a disjunction of first-order sentences.
  Therefore it transfers to the elementary substructure $(K,U)$.
\end{proof}

We summarize the situation below:
\begin{theorem}\label{vsum:thm}
  For every $n$, there is a local sentence $\sigma_n$ holding in
  $(K,\tau)$ if and only if $\tau$ is generated by $n$ independent
  V-topologies.  If $(K,\tau) \models \sigma_n$, then $(K,\tau)$ is a
  $W_n$-topology, but not a $W_{n-1}$-topology.
\end{theorem}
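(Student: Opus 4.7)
The plan is to set $\sigma_n$ to be the local sentence asserting ``$\tau$ is a $V^n$-topology and $\tau$ is not a $W_{n-1}$-topology.'' That ``$\tau$ is a $V^n$-topology'' is local was noted right after the definition of $V^n$-topology (via Remark~\ref{v:rem}), while ``$\tau$ is a $W_{n-1}$-topology'' is the local sentence of Remark~\ref{local-class:rem}. The Prestel--Ziegler local-sentence calculus is closed under negation (which swaps $\forall/\exists$ and flips the positivity of occurrences of $U$) and under conjunction, so the combination $\sigma_n$ is itself a local sentence.

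For the forward direction I would apply the first of the two preceding propositions directly: if $\tau$ is the join of $n$ independent V-topologies, then $\tau$ is a $V^n$-topology and not a $W_{n-1}$-topology, which is precisely $\sigma_n$. This also settles the final assertion of the theorem, since the $V^n$-clause of $\sigma_n$ gives $W_n$ by Corollary~\ref{vbasic:cor}, and the other clause is the explicit exclusion of $W_{n-1}$.

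For the converse, I would apply the second preceding proposition to produce V-topologies $\tau_1, \ldots, \tau_m$ with $m \le n$ whose join is $\tau$. Using the standard dichotomy that two V-topologies on a field are either comparable or independent, I would discard any $\tau_i$ that is coarser than another $\tau_j$ (this does not change the generated topology), reducing to a pairwise-independent family. If $m < n$, the forward direction applied to that shorter family would force $\tau$ to be a $W_m$-topology, hence a $W_{n-1}$-topology, contradicting $\sigma_n$. So $m = n$, and the desired decomposition into $n$ independent V-topologies has been produced.

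The main obstacle I expect is purely bookkeeping: one must be confident that ``non-$W_{n-1}$'' really lives in the Prestel--Ziegler local-sentence calculus (so $\sigma_n$ genuinely cuts out a local class), and that the reduction to a pairwise-incomparable generating family is legitimate and genuinely uses the coarsening dichotomy for V-topologies. Neither point is deep, but they are the steps where a careless argument could fail.
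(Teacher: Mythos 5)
Your proposal is correct and matches the paper's intent: the paper states this theorem as a summary of the two immediately preceding propositions and gives no separate proof, and the intended $\sigma_n$ is exactly your ``$V^n$ and not $W_{n-1}$'' (both clauses local, as the paper itself uses, e.g.\ in Remark~\ref{local-class:rem} and the remark that non-$V^n$-topologies form a local class), with the forward direction from the first proposition and the converse from the second plus Corollary~\ref{vbasic:cor}. Your ``comparable or independent'' dichotomy is fine as used, though the sharper classical fact is that two \emph{distinct} V-topologies are automatically independent (hence incomparable), so the reduction really only discards duplicates.
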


\begin{remark}\label{kowalsky}
  A coarsening of a $V^n$-topology is again a $V^n$-topology.  This is
  Lemma~4.4 in \cite{PZ}.
\end{remark}

\subsection{Independence/approximation}\label{approx:sec}

\begin{definition}
  Let $\tau, \tau'$ be two ring topologies on $K$.  Then $\tau$ and
  $\tau'$ are \emph{independent} if every $\tau$-open set $U$
  intersects every $\tau'$-open set $V$.
\end{definition}
This can be expressed via a local sentence:
\begin{equation*}
  \forall x ~\forall y ~\forall U \in \tau ~\forall V \in \tau' ~\exists z
  : (z - x \in U \text{ and } z - y \in V).
\end{equation*}
Note that we can equivalently just say
\begin{equation*}
  \forall U \in \tau ~ \forall V \in \tau' : U + V = K.
\end{equation*}

\begin{lemma}\label{ind:lem}
  Let $R$ be a $W_n$-ring on $K$.  For $i = 1, 2$, let $R_i$ be a
  subring of $K$ containing $R$.  Then one of the following
  holds:
  \begin{itemize}
  \item There is a V-topology coarser than both $\tau_{R_1}$ and
    $\tau_{R_2}$.
  \item $\tau_{R_1}$ and $\tau_{R_2}$ are independent.
  \end{itemize}
\end{lemma}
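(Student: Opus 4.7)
The plan is to split into two cases based on whether the subring $R_1 R_2 \subseteq K$ generated by $R_1 \cup R_2$ equals $K$. Since $R \subseteq R_1 R_2$, Lemma~\ref{super:lem} gives that $R_1 R_2$ is itself a $W_n$-ring unless $R_1 R_2 = K$.

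First, if $R_1 R_2 \ne K$, then Remark~\ref{nont:rem} gives a non-trivial integral closure, and Proposition~\ref{iclose:prop} realizes this integral closure as an intersection of valuation rings $\Oo_1 \cap \cdots \cap \Oo_m$. Any such $\Oo_i$ contains $R_1 R_2 \supseteq R_1 \cup R_2$, so $\tau_{\Oo_i}$ is a V-topology coarser than both $\tau_{R_1}$ and $\tau_{R_2}$; this is the first alternative of the lemma.

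Suppose instead that $R_1 R_2 = K$. I claim $\tau_{R_1}$ and $\tau_{R_2}$ are independent. Independence in this setting is equivalent to $c_1 R_1 + c_2 R_2 = K$ for all $c_1, c_2 \in K^\times$, since the $c_i R_i$ are additive subgroups and $(a + c_1 R_1) \cap (b + c_2 R_2) \ne \emptyset$ iff $a - b \in c_1 R_1 + c_2 R_2$. I would argue by contradiction: if this equality fails for some $c_1, c_2$, pass to a highly saturated (hence $\omega$-complete) elementary extension $(K^*, R_1^*, R_2^*)$ in which the failure persists, and form the common coarsening $\sigma := \tau_{R_1^*} \cap \tau_{R_2^*}$, whose basic $0$-neighborhoods are the additive subgroups $c_1 R_1^* + c_2 R_2^*$. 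Provided $\sigma$ is shown to be a Hausdorff non-discrete ring topology, it is a $W_n$-topology by Lemma~\ref{coarse1:lem}, hence induced by a $W_n$-subring $S \subseteq K^*$ by Lemma~\ref{extend:lem}, and by invoking the freedom to enlarge $S$ past any given bounded set one may arrange $R_1^* \cup R_2^* \subseteq S$. Then $S$ contains the subring generated by $R_1^* \cup R_2^*$, which by elementarity of the original identity $R_1 R_2 = K$ equals $K^*$, contradicting $S \ne K^*$.

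The main obstacle is establishing the Hausdorffness of $\sigma$, equivalently that $\bigcap_{c_1, c_2} (c_1 R_1^* + c_2 R_2^*) = \{0\}$; this is where the failure of independence, together with $\omega$-completeness, must do the real work. A possibly cleaner alternative would bypass the common-coarsening construction entirely: when $R_1 R_2 = K$, the valuation rings appearing in the integral closures $\widetilde{R_1}, \widetilde{R_2}$ (given by Proposition~\ref{iclose:prop}) are pairwise incomparable (else they would produce a valuation ring containing $R_1 R_2$, contradicting Case~A), so the classical approximation theorem for pairwise incomparable valuations yields $\widetilde{R_1} + c \widetilde{R_2} = K$ for every $c$; the remaining delicate step is then to descend this conclusion from $\widetilde{R_i}$ to $R_i$ itself.
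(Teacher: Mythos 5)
Your Case A is correct and is a genuinely nicer observation than anything in the paper's proof for that situation: when the compositum $R_1R_2$ is proper, Lemma~\ref{super:lem}, Remark~\ref{nont:rem} and Proposition~\ref{iclose:prop} do produce a non-trivial valuation ring containing both $R_i$, whose topology is coarser than both $\tau_{R_1}$ and $\tau_{R_2}$. But the real content of the lemma sits in your Case B, and there the argument has genuine gaps. First, you have misidentified the main obstacle. Hausdorffness of $\sigma$ is actually the easy part: if independence fails, some $c_1R_1+c_2R_2\ne K$, and then for any $x\ne 0$ and any $y\notin c_1R_1+c_2R_2$ one has $x\notin (xc_1/y)R_1+(xc_2/y)R_2$, so scaling separates points (this is exactly the Scaling/Non-degeneracy step in Theorem~\ref{gold:thm}). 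The genuinely hard point is that $\sigma$ is a \emph{ring} topology at all, i.e.\ continuity of multiplication at $(0,0)$: given $c_1,c_2$ you must find $d_1,d_2$ with $(d_1R_1+d_2R_2)\cdot(d_1R_1+d_2R_2)\subseteq c_1R_1+c_2R_2$, and the cross terms $d_1d_2\,r_1r_2$ with $r_i\in R_i$ are precisely what you cannot control naively when $R_1R_2=K$. The paper handles this by working with the full lattice of $R$-submodules that are neighborhoods in both topologies, verifying the axioms of Definition~\ref{gold:def} (non-degeneracy coming from the failure of independence), and invoking the rank/guarding machinery of Theorem~\ref{gold:thm}; your sketch offers no substitute for this step, and without it neither Lemma~\ref{coarse1:lem} nor Lemma~\ref{extend:lem} can be applied to $\sigma$.

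Second, even granting that $\sigma$ is a $W_n$-topology induced by a ring $S\supseteq R_1^*\cup R_2^*$, the final contradiction does not go through as written: ``$R_1R_2=K$'' is not a first-order statement (membership in the generated ring is an infinite disjunction over the number of terms $\sum_j a_jb_j$), so it does not transfer to $R_1^*R_2^*=K^*$ in a saturated extension. You would have to argue differently there, e.g.\ produce a common V-topological coarsening in $K^*$ and transfer \emph{that} back down by the definability devices of Theorem~\ref{defcourse}, at which point you are essentially reproving the paper's argument. Your alternative sketch has the same kind of gap as the one you acknowledge: the valuation rings over $R_1$ and over $R_2$ are indeed pairwise independent when $R_1R_2=K$, but an approximation statement for the integral closures $\widetilde{R_1},\widetilde{R_2}$ only concerns the coarser topologies $\tau_{\widetilde{R_i}}$, and independence of coarsenings does not imply independence of the finer topologies $\tau_{R_i}$ (finer topologies have smaller opens), so the descent step is a real obstruction, not a formality. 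Note also that the lemma only asks for ``independent \emph{or} common V-coarsening,'' so in Case B it suffices to produce a common V-topological coarsening when independence fails --- which is exactly what the paper's golden-lattice argument does, uniformly and without the case split on $R_1R_2$.
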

\begin{proof}
  Let $\Lambda^+$ be the set of $R$-submodules of $M \le K$ satisfying
  the following equivalent conditions:
  \begin{itemize}
  \item $M$ is a neighborhood of 0 in both $\tau_{R_1}$ and
    $\tau_{R_2}$.
  \item There are non-zero ideals $I_1 \le R_1$ and $I_2 \le R_2$ such
    that $I_1, I_2 \subseteq M$.
  \item There are non-zero $c_1, c_2 \in K$ such that $c_1 R_1
    \subseteq M$ and $c_2 R_2 \subseteq M$.
  \end{itemize}
  Then $\Lambda^+$ is an unbounded sublattice of $\Sub_R(K)$, closed
  under scaling by $K^\times$.  Let $\Lambda = \{0\} \cup \Lambda^+$.
  Then $\Lambda$ is a bounded sublattice of $\Sub_R(K)$.  It has rank
  at most $n$.  Thus $\Lambda$ satisfies all the axioms of golden
  lattices (Definition~\ref{gold:def}), except possibly
  non-degeneracy.

  If $\tau_{R_1}$ and $\tau_{R_2}$ are independent, we are done.
  Otherwise, there are ideals $I_1 \le R_1$ and $I_2 \le R_2$ such
  that $I_1 + I_2 < K$.  Then $I_1 + I_2 \in \Lambda$, showing that
  $\Lambda$ is a golden lattice.  By Theorem~\ref{gold:thm}, the sets
  $\Lambda^+$ define a $W_n$-topology $\tau'$ on $K$.  By definition
  of $\Lambda$, every $\tau'$-neighborhood of 0 is a neighborhood of 0
  in the topologies $\tau_{R_1}$ and $\tau_{R_2}$.  Thus $\tau'$ is a
  common coarsening of $\tau_{R_1}$ and $\tau_{R_2}$.  By
  theorem~\ref{defcourse}, there is a V-topology coarser than $\tau'$,
  hence coarser than $\tau_{R_1}$ and $\tau_{R_2}$.
\end{proof}

\begin{theorem}
  Let $\tau_0, \tau_1, \tau_2$ be three W-topologies on $K$, with
  $\tau_0$ finer than $\tau_1$ and $\tau_2$.  Then at least one of the
  following holds:
  \begin{itemize}
  \item $\tau_1$ and $\tau_2$ are independent.
  \item $\tau_1$ and $\tau_2$ share a common V-topological coarsening.
  \end{itemize}
\end{theorem}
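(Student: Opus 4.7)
The plan is to assume that $\tau_1,\tau_2$ are not independent, construct a common V-topological coarsening inside a saturated elementary extension by applying Lemma~\ref{ind:lem}, and then transfer the conclusion back by the disjunction-of-first-order-sentences technique used in Theorem~\ref{defcourse}. The role of $\tau_0$ is to provide a common $W$-subring $R_0$ contained in the rings inducing $\tau_1^*,\tau_2^*$, without which Lemma~\ref{ind:lem} could not be applied.

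Choose bounded $W$-set neighborhoods $D_i$ of $0$ for each $\tau_i$, and work in the first-order structure $(K,D_0,D_1,D_2)$: the hypotheses that $\tau_0$ is finer than $\tau_1,\tau_2$, and that $\tau_1,\tau_2$ fail to be independent, are then first-order statements in this language. Pass to a highly saturated elementary extension $(K^*,D_0^*,D_1^*,D_2^*)$, and for each $i$ let $R_i$ be the $\vee$-definable subring of $K^*$ generated by $D_0^*\cup D_i^*$. By Proposition~\ref{gap:prop}, each $R_i$ is a bounded $W$-ring inducing $\tau_i^*$, with $R_0\subseteq R_1$ and $R_0\subseteq R_2$. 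The failure of independence transfers, producing nonzero $a,b\in K^*$ with $M:=aR_1+bR_2\ne K^*$, a proper $\vee$-definable $R_0$-submodule of $K^*$.

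Apply Lemma~\ref{ind:lem} to $R_0\subseteq R_1,R_2$: the golden lattice $\Lambda$ of its proof is non-degenerate (witnessed by $M\in\Lambda^+$) and yields a $W$-topology $\tau'$ on $K^*$ that is a common coarsening of $\tau_1^*$ and $\tau_2^*$. Since $\Lambda^+$ is the full neighborhood basis of $\tau'$ and its elements are pairwise co-embeddable, $M$ is a bounded neighborhood of $0$ in $\tau'$; moreover it is $\vee$-definable and an additive subgroup, so Proposition~\ref{upgrader:prop} upgrades $\tau'$ to a definable $W$-topology on $K^*$. Theorem~\ref{defcourse}, parts~(\ref{p1}) and~(\ref{p3}), then supplies a definable V-topology $\sigma$ on $K^*$ coarser than $\tau'$, hence coarser than both $\tau_1^*$ and $\tau_2^*$.

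To conclude, note that ``there exists a definable $E\subseteq K$ which is a bounded $W_1$-set and is simultaneously a neighborhood of $0$ in $\tau_1$ and in $\tau_2$'' is a disjunction, over defining formulas, of first-order sentences in $(K,D_0,D_1,D_2)$. This disjunction holds in $K^*$ (witnessed by the $\sigma$-neighborhood produced above), so it holds in $K$, and any such $E$ induces a V-topology on $K$ coarser than both $\tau_1$ and $\tau_2$. The main technical obstacle is the middle paragraph: one must verify that $M$ lies in the lattice defining $\tau'$ and is \emph{bounded} in $\tau'$ (not merely a neighborhood), which follows from Theorem~\ref{gold:thm} applied to $\Lambda^+$ but must be stated carefully so that Proposition~\ref{upgrader:prop} delivers definability of $\tau'$ in exactly the structure needed for the final first-order transfer.
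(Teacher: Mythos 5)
Your skeleton is the paper's: name definable bounded $W$-set neighborhoods, pass to a saturated elementary extension, generate $\vee$-definable rings $R_0 \subseteq R_1, R_2$ inducing the three topologies, feed the transferred failure of independence into Lemma~\ref{ind:lem}, and pull the resulting definable V-topology back down via a disjunction of first-order sentences. The problem is your middle paragraph. To make the common coarsening definable you reopen the proof of Lemma~\ref{ind:lem}, take $M = aR_1 + bR_2$, and assert that $M$ is bounded for $\tau'$ because ``$\Lambda^+$ is the full neighborhood basis of $\tau'$ and its elements are pairwise co-embeddable.'' That is false: $K^*$ itself belongs to $\Lambda^+$ and is unbounded, and a golden lattice generally has many unbounded members. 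Theorem~\ref{gold:thm} does not give boundedness of arbitrary elements of $\Lambda^+$; the relevant statement, Lemma~\ref{pedbound:lem}, only applies when $\crk_\Lambda(K^*/A)$ equals the full rank, which you have not verified for $M$. The fact you need is true and provable directly --- given $N \in \Lambda^+$ with $c_1 R_1 \subseteq N$ and $c_2 R_2 \subseteq N$, any nonzero $d \in c_1a^{-1}R_1 \cap c_2b^{-1}R_2$ satisfies $dM \subseteq N$, and that intersection is nonzero because any two nonzero principal $R_0$-submodules of $K^* = \Frac(R_0)$ meet nontrivially --- but as written the step does not stand.

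The larger point is that the whole detour is unnecessary, and removing it recovers the paper's argument: the V-topology supplied by Lemma~\ref{ind:lem} is already a coarsening of $\tau_1^*$, which is definable (via $D_1^*$), so Theorem~\ref{defcourse}(\ref{p3}) applied to $\tau_1^*$ makes it definable in $(K^*,D_0^*,D_1^*,D_2^*)$ with no appeal to Proposition~\ref{upgrader:prop}, to $M$, or to definability of the intermediate $W$-coarsening $\tau'$. Separately, your closing transfer sentence is under-specified: ``$E$ is a definable bounded $W_1$-set and a neighborhood of $0$ in $\tau_1$ and $\tau_2$'' does not by itself imply that $E$ generates a V-topology, and if ``bounded'' means $\tau_1$-bounded it is too strong, since a $\tau_1$-bounded $\tau_1$-neighborhood generates $\tau_1$ itself. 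The sentences to transfer should say that $\{cE : c \in K^\times\}$ is a neighborhood basis of a Hausdorff non-discrete ring topology, that $E$ is a $W_1$-set, and that each $cE$ contains a rescaling of $D_1$ and of $D_2$; this is what the paper compresses into ``there is a definable V-topology coarser than the topologies induced by $B_1^*$ and $B_2^*$.''
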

\begin{proof}
  Suppose $\tau_1$ and $\tau_2$ are dependent.  As usual, we can find
  sets $B_0, B_1, B_2$ such that
  \begin{itemize}
  \item $B_i$ is a bounded neighborhood of 0 in $\tau_i$
  \item $B_0 \subseteq B_1$ and $B_0 \subseteq B_2$
  \item $1 \in B_0$, and $B_0$ is a $W_n$-set for some $n$.
  \end{itemize}
  Let $(K^*,B_0^*,B_1^*,B_2^*)$ be a saturated elementary extension of
  $(K,B_0,B_1,B_2)$.  As usual, $B_i^*$ defines a topology $\tau_i^*$
  on $K^*$, and $(K^*,\tau_0^*,\tau_1^*,\tau_2^*)$ is locally
  equivalent to $(K,\tau_0,\tau_1,\tau_2)$.  In particular, $\tau_1^*$
  and $\tau_2^*$ are still dependent.

  Let $R_i$ be the $\vee$-definable ring generated by $B_i$.  As
  usual, $R_i$ generates $\tau_i$.  Then $R_0$ contains the $W_n$-set
  $B_0$, so $R_0$ is a $W_n$-ring.  Because $\tau_1^*$ and $\tau_2^*$
  are not independent, Lemma~\ref{ind:lem} yields a V-topology coarser
  than both $R_1$ and $R_2$.  By Theorem~\ref{defcourse}, this
  V-topology is definable in the structure $(K^*,B_0^*,B_1^*,B_2^*)$.
  The statement ``there is a definable V-topology coarser than the
  topologies induced by $B_1^*$ and $B_2^*$'' is expressed by a
  disjunction of first-order sentences, so it holds in the elementary
  substructure $(K,B_0,B_1,B_2)$.
\end{proof}

\section{$W_n$-rings and inflators} \label{sec:inflators}
\begin{lemma}
  Let $R$ be a ring and $M$ be a module.  If $\crk_R(M) \ge n$, then
  $M$ has a subquotient that is semisimple of length $n$.
\end{lemma}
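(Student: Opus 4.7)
The plan is to exploit the first of the two equivalent characterizations of cube rank recalled just before the statement (from \cite{prdf3}, Remark~6.7): $\crk_R(M) \ge n$ yields submodules $N' \le N \le M$ together with an isomorphism $N/N' \cong M_1 \oplus \cdots \oplus M_n$ in which each $M_i$ is nontrivial. I would reduce the problem to producing, for each $i$, a \emph{simple} subquotient $S_i$ of $M_i$; once this is done, assembling the $S_i$'s is formal.

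Given such $S_i$'s, the direct sum $\bigoplus_{i=1}^{n} S_i$ is semisimple of length $n$. Concretely, if $P_i \le Q_i \le M_i$ realize $S_i = Q_i/P_i$, then $(Q_1 \oplus \cdots \oplus Q_n)/(P_1 \oplus \cdots \oplus P_n) \cong \bigoplus_i S_i$ exhibits $\bigoplus_i S_i$ as a subquotient of $\bigoplus_i M_i \cong N/N'$. Pulling back along the quotient $N \twoheadrightarrow N/N'$ and applying the third isomorphism theorem realizes this directly as a subquotient of $M$; in other words, subquotients compose, which is routine bookkeeping.

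It remains to build $S_i$, and here I would pick any nonzero $x_i \in M_i$ and pass to the nonzero cyclic submodule $R x_i \cong R/\Ann(x_i)$. Since $\Ann(x_i)$ is a proper ideal of $R$, Zorn's lemma gives a maximal ideal $\mathfrak{m}_i \supseteq \Ann(x_i)$; the corresponding maximal submodule $P_i < R x_i$ has simple quotient $S_i := R x_i / P_i$, as required. The only non-routine ingredient is the existence of maximal submodules in nonzero cyclic modules (equivalently, maximal ideals of $R$ containing a given proper ideal), which is a standard application of Zorn's lemma; the rest is diagram-chasing.
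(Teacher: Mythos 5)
Your proof is correct and is essentially the paper's own argument: take a subquotient isomorphic to a direct sum of $n$ nontrivial modules, extract a simple subquotient from each summand, and assemble the direct sum as a subquotient of $M$. The paper simply asserts that each nontrivial $N_i$ has a simple subquotient, while you fill in that routine step via a maximal submodule of a nonzero cyclic submodule (for noncommutative $R$, read ``maximal left ideal containing $\Ann(x_i)$''), so the two proofs coincide in substance.
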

\begin{proof}
  $M$ has a subquotient isomorphic to $\bigoplus_{i = 1}^n N_i$ for
  some non-trivial $R$-modules $N_i$.  Each $N_i$ has a simple
  subquotient $N'_i$.  Then $\bigoplus_{i = 1}^n N'_i$ is a
  subquotient of $M$.
\end{proof}

\begin{proposition}\label{to-inflators}
  Let $R$ be a $W_n$-ring on a field $K$.  Then there is $m \le n$ and ideals
  $A \subseteq B \subseteq R$ such that
  \begin{enumerate}
  \item $B/A$ is a semisimple $R$-module of length $m$.
  \item The induced map
    \begin{align*}
      \varsigma : \Dir_K(K) & \to \Dir_R(B/A) \\
      \Sub_K(K^i) & \to \Sub_R(B^i/A^i) \\
      V & \mapsto (V \cap B^i + A^i)/A^i
    \end{align*}
    is a malleable $m$-inflator.
  \item If $\varsigma'$ is any mutation of $\varsigma$, such as
    $\varsigma$ itself, and if $R'$ is the fundamental ring of $R$,
    then there is $c \in K^\times$ such that
    \begin{equation*}
      R \subseteq R' \subseteq c R.
    \end{equation*}
    Therefore $R'$ is a $W_n$-ring co-embeddable with $R$.
  \end{enumerate}
\end{proposition}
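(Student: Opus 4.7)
The plan is to proceed in three steps, mirroring the three parts of the statement.

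First, I would set $m = \wt(R) = \crk_R(R)$, so that $m \le n$ because $R$ is a $W_n$-ring. Applying the lemma immediately preceding the statement to the regular $R$-module $M = R$, I obtain $R$-submodules (i.e., ideals) $A \subseteq B \subseteq R$ such that $B/A$ is semisimple of length $m$. This gives part (1) on the nose and fixes the data needed for parts (2) and (3).

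Next, for part (2), I would verify the inflator axioms from \cite{prdf3} for the assignment $\varsigma(V) = (V \cap B^i + A^i)/A^i$ on $\Sub_K(K^i)$. The assignment is plainly monotone, sends $0 \mapsto 0$ and $K^i \mapsto (B/A)^i$, and commutes with intersections; compatibility with sums holds up to the slack permitted in the definition because $A,B$ are $R$-ideals. The crucial calibration is the length inequality: for any $K$-subspace $V \subseteq K^i$ with $\dim_K V = d$, one must show $\length_R \varsigma(V) \le m \cdot d$, and that equality can be witnessed. The upper bound follows because $V \cap B^i / V \cap A^i$ embeds in a quotient of $V \otimes_K (B/A)^{\text{something}}$ once we clear denominators, and $B/A$ has length $m$; the lower bound follows from $V = K^i$ giving $(B/A)^i$ of length $mi$, combined with additivity across a basis. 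Malleability amounts to the construction being unchanged (up to permitted mutation) under reshuffling the simple summands of $B/A$, which is automatic from semisimplicity.

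I expect the main obstacle to be the routine but notationally delicate verification of the inflator and malleability axioms in (2), in particular the length calibration above; the rest is bookkeeping with the definitions in \cite{prdf3}.

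Finally, for part (3), I would unwind the definition of the fundamental ring $R'$ of a mutation $\varsigma'$. Any such $R'$ is the ring of $c \in K$ that preserve the filtration $A \subseteq B$ (more precisely, that act compatibly on $\varsigma'$), so the inclusion $R \subseteq R'$ is immediate since $A, B$ are $R$-ideals; this holds whether one takes $\varsigma' = \varsigma$ or any other mutation, as mutations do not alter the underlying $R$-module structure on $B/A$. For the reverse, fix a nonzero $b \in B$. For any $c \in R'$, one has $cB \subseteq B \subseteq R$, hence $cb \in R$, so $c \in b^{-1} R$. Therefore $R' \subseteq b^{-1} R$, and $R \subseteq R' \subseteq cR$ with $c = b^{-1}$. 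The statement that $R'$ is a $W_n$-ring then follows from Lemma~\ref{super:lem} applied to the inclusion $R \subseteq R'$ inside $K$, and co-embeddability is witnessed by the same $c$.
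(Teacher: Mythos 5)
Your part (1) matches the paper: take $m=\wt(R)=\crk_R(R)$ and apply the preceding lemma to $M=R$. The problems are in parts (2) and (3), and they are genuine gaps rather than omitted routine details. For (2), the paper does not verify the inflator axioms by hand; it observes that $m=\crk_R(K)$ (Lemma~\ref{kr:lem}), so $A$ is a \emph{pedestal} in the lattice of $R$-submodules of $K$, and then quotes the pedestal machinery of \cite{prdf3} (Theorems~8.5, 8.9, 8.12) to get that $\varsigma$ is a malleable $m$-inflator. Your sketch asserts the length calibration via ``clear denominators'' and a tensor-product heuristic that is not an argument, and, more seriously, your description of malleability (``unchanged under reshuffling the simple summands of $B/A$, automatic from semisimplicity'') is not what malleability means in \cite{prdf3}; it is a substantive property of the inflator, proved there for pedestal-induced inflators, not a formal consequence of semisimplicity. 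Since the statement is phrased in terms of these technical notions, a proof must either invoke that machinery (as the paper does) or re-prove it, and your proposal does neither.

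For (3), your argument rests on the claim that ``mutations do not alter the underlying $R$-module structure'' and that the fundamental ring stabilizes $B$, and both points are false. A mutation $\varsigma'$ of $\varsigma$ is induced by a \emph{different} pedestal $A'=b_1A\cap\cdots\cap b_kA$ (Proposition~10.15 in \cite{prdf3}), and its fundamental ring is the stabilizer $R'=\{x\in K: xA'\subseteq A'\}$ (Proposition~8.10 in \cite{prdf3}); nothing forces $x\in R'$ to satisfy $xB\subseteq B$, so your step ``$cB\subseteq B\subseteq R$, hence $c\in b^{-1}R$'' is unjustified even for $\varsigma'=\varsigma$. The inclusion $R\subseteq R'$ does survive, since $A'$ is an $R$-module, but the reverse embedding needs the paper's argument: $A'$ is a bounded neighborhood of $0$ (a finite intersection of rescalings of the bounded neighborhood $A$), so there are nonzero $c_1\in A'$ and $c_2\in K^\times$ with $c_2A'\subseteq R$, giving $c_2c_1R'\subseteq c_2A'R'\subseteq c_2A'\subseteq R$. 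Your final appeal to Lemma~\ref{super:lem} for the $W_n$ property of $R'$ is fine, but the core of part (3) --- controlling the fundamental ring of an arbitrary mutation --- is exactly what your proposal skips.
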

\begin{proof}
  Let $m = \wt(R) = \crk_R(R)$.  Take submodules (i.e., ideals) $A \subseteq B
  \subseteq R$ such that $B/A$ is semisimple of length $m$.  The
  first point holds.  Note $m = \crk_R(K)$, by Lemma~\ref{kr:lem}.
  Then $A$ is a ``pedestal'' in the lattice of $R$-submodules of $K$,
  and the second point follows by Theorems~8.5, 8.9, 8.12 in \cite{prdf3}.  If
  $\varsigma'$ is a mutation of $\varsigma$, then $\varsigma'$ is
  induced by another pedestal $A'$, of the form
  \begin{equation*}
    A' = b_1 A \cap \cdots \cap b_k A,
  \end{equation*}
  for some non-zero $b_i$, by Proposition~10.15 in \cite{prdf3}.  The
  the fundamental ring $R'$ is exactly the ``stabilizer''
  \begin{equation*}
    R' = \{x \in K : xA' \subseteq A'\},
  \end{equation*}
  by Proposition~8.10 in \cite{prdf3}.
  Certainly $R' \supseteq R$, as $A'$ is an $R$-module.  Also, $A'$ is
  a bounded neighborhood of 0, because $A$ is.  Therefore, there is
  non-zero $c_1 \in A'$, and there is non-zero $c_2 \in K^\times$ such
  that $c_2 A' \subseteq R$.  Then
  \begin{equation*}
    c_2 c_1 R' \subseteq c_2 A' R' \subseteq c_2 A' \subseteq R,
  \end{equation*}
  showing that $R'$ is embeddable into $R$.
\end{proof}

\subsection{$W_2$-rings}
Applying the results of \cite{prdf4}, we obtain the following fact
about $W_2$-rings on fields of characteristic 0:
\begin{theorem}\label{w2ring:thm}
  Let $K$ be a field of characteristic 0, and let $R$ be a $W_2$-ring
  on $K$.  Then one of two things happens:
  \begin{enumerate}
  \item $R$ is co-embeddable with a ring of the form
    \[ Q = \{x \in K : \val(x) \ge 0 \text{ and } \val(\partial x) \ge 0\} \]
    induced by some dense ``diffeovaluation data'' as in \S 8 of
    \cite{prdf4}.
  \item \label{lazy} There is a multi-valuation ring $S$ and $c \in K^\times$ such
    that $cS \subseteq R$.
  \end{enumerate}
\end{theorem}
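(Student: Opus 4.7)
The plan is to extract a $2$-inflator from $R$ via Proposition~\ref{to-inflators} and then invoke the classification of $2$-inflators in characteristic $0$ carried out in \cite{prdf4}. If $\wt(R) \le 1$, then $R$ is itself a valuation ring and case~\ref{lazy} is immediate with $S = R$ and $c = 1$, so I may assume $\wt(R) = 2$.

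First I would apply Proposition~\ref{to-inflators} to obtain a malleable $2$-inflator $\varsigma : \Dir_K(K) \to \Dir_R(B/A)$, together with the crucial additional property that for every mutation $\varsigma'$ of $\varsigma$, the fundamental ring $R'$ of $\varsigma'$ satisfies $R \subseteq R' \subseteq cR$ for some $c \in K^\times$, and is therefore a $W_2$-ring co-embeddable with $R$. Next I would invoke the structural classification of $2$-inflators in characteristic $0$ from \cite{prdf4}: up to mutation, every such $2$-inflator is either of \emph{diffeovaluation type} (arising from a derivation $\partial$ on a valued field as in \S 8 of \cite{prdf4}), or else decomposes as a combination of two $1$-inflators coming from an independent pair of valuations on $K$. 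This dichotomy is exactly the inflator-side counterpart of the $W_2$-topology classification advertised in the introduction.

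In the diffeovaluation case, after passing to a suitable mutation $\varsigma'$, one reads off from the explicit construction in \cite{prdf4}, \S 8 that the fundamental ring of $\varsigma'$ is literally of the form $Q = \{x \in K : \val(x) \ge 0 \text{ and } \val(\partial x) \ge 0\}$. The co-embeddability conclusion of point~3 of Proposition~\ref{to-inflators} then gives case~1. In the split case, a suitable mutation has fundamental ring $R' = \Oo_1 \cap \Oo_2$ for the two valuation rings appearing in the decomposition, which is a multivaluation ring $S$. From $R' \subseteq cR$ we get $c^{-1}S \subseteq R$, which is case~\ref{lazy}.

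The main obstacle I expect is not the overall architecture, which is dictated by Proposition~\ref{to-inflators}, but the bookkeeping inside \cite{prdf4}: one has to pick a mutation whose fundamental ring matches \emph{exactly} the diffeovaluation form $Q$ (rather than some co-embeddable but syntactically different ring), and to check that the classification of $2$-inflators in characteristic $0$ is formulated in \cite{prdf4} in a form that cleanly produces this dichotomy on the ring side. Once the translation between the inflator classification and the ring presentations is set up, the rest of the argument is a direct application of Proposition~\ref{to-inflators}.
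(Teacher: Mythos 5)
Your proposal matches the paper's proof in all essentials: extract the malleable $2$-inflator via Proposition~\ref{to-inflators}, apply the characteristic-$0$ classification of $2$-inflators from \cite{prdf4}, and transfer back to $R$ through the co-embeddability of fundamental rings under mutation. The only deviation is cosmetic: where you posit a split case with fundamental ring exactly $\Oo_1 \cap \Oo_2$, the paper uses the weaker dichotomy actually available in \cite{prdf4} (some mutation is of \emph{weakly multi-valuation type}, i.e.\ its fundamental ring merely contains $eS$ for a multivaluation ring $S$, versus Corollary~8.27 producing a diffeovaluation inflator whose fundamental ring is literally $Q$), and that weaker containment is already all that case~\ref{lazy} requires.
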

\begin{proof}
  Let $\varsigma$ be the malleable $m$-inflator as in
  Proposition~\ref{to-inflators}.  Then $m = 1$ or $m = 2$.  We break
  into two cases:
  \begin{itemize}
  \item Suppose no mutation $\varsigma'$ of $\varsigma$ is weakly
    multi-valuation type (Definition~5.27 in \cite{prdf3}).  By
    Proposition~5.19 in \cite{prdf3}, $m > 1$, so $m = 2$.  By
    Corollary~8.27 in \cite{prdf4}, some mutation $\varsigma'$ of
    $\varsigma$ is a ``diffeovaluation inflator'' (Definition~8.25 in
    \cite{prdf4}).  Let $R'$ be the fundamental ring of $\varsigma'$.
    By Proposition~\ref{to-inflators}, $R'$ is co-embeddable with $R$.
    By the proof of Corollary~8.27 in \cite{prdf4}, $R'$ is the
    desired set $\{x \in K : \val(x) \ge 0 \text{ and } \val(\partial
    x) \ge 0\}$ obtained from the diffeovaluation data.
  \item Suppose some mutation $\varsigma'$ of $\varsigma$ is weakly
    multi-valuation type.  By definition, this means that the
    fundamental ring $R'$ of $\varsigma'$ contains $eS$ for some $e
    \in K^\times$ and some multivaluation ring $S$.  Then $S$ is
    embeddable into $R'$, and by Proposition~\ref{to-inflators}, $R'$
    is embeddable into $R$. \qedhere
  \end{itemize}
\end{proof}

\subsection{$W_2$-topologies in characteristic 0} \label{w2-fun}

\begin{lemma}\label{dv:lem}
  Let $K$ be a field of characteristic 0.  Let $\tau$ be a DV-topology
  in the sense of \cite{prdf4}, Definition~8.18.  Then $\tau$ is a
  $W_2$-topology, but not a $V^n$-topology for any $n$.
\end{lemma}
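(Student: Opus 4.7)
The plan has two parts matching the two claims in the lemma.

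For the $W_2$-topology claim, the DV-topology $\tau$ is by construction induced by a diffeovaluation ring $Q = \{x \in K : \val(x) \ge 0 \text{ and } \val(\partial x) \ge 0\}$ arising from the DV data. By Lemma~8.23 of \cite{prdf4}, such a $Q$ is a $W_2$-ring on $K$, and the non-triviality built into the definition of a DV-topology guarantees $Q$ is a proper subring. The proposition of \S 3.1 (non-trivial $W_n$-rings induce $W_n$-topologies) then gives immediately that $\tau$ is a $W_2$-topology.

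For the failure of $\tau$ to be a $V^n$-topology, I would argue by contradiction. Suppose $\tau$ is a $V^n$-topology. By Theorem~\ref{vsum:thm}, $V^n$-topologies have weight exactly $n$, and since $\tau$ has weight at most $2$, we may take $n \le 2$. Pass to a highly saturated, $\omega$-complete elementary extension $(K^*,\tau^*)$; by Lemma~\ref{v2:lem}, the topology $\tau^*$ is induced by a multivaluation ring $S = \Oo_1 \cap \cdots \cap \Oo_n$ on $K^*$. Simultaneously, the DV data lifts to $K^*$, giving a ring $Q^*$ that induces the same topology $\tau^*$. Hence $Q^*$ and $S$ are co-embeddable: there exist $c_1,c_2 \in (K^*)^\times$ with $c_1 S \subseteq Q^*$ and $c_2 Q^* \subseteq S$.

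The main obstacle is to exploit this co-embeddability to derive a contradiction, and this is where the derivation does real work. The DV data comes with a density condition, which in a sufficiently saturated extension produces elements $x \in K^*$ with $\val^*(x) = 0$ but $\val^*(\partial x)$ arbitrarily negative. For any fixed finite list $q_1,\ldots,q_n \in K^*$ and any $c \in (K^*)^\times$, a short computation with the Leibniz rule shows that both $\val^*(\partial((x-q_i)/c))$ and $\val^*(\partial(c/(x-q_i)))$ remain very negative, so none of the elements $x, 1/(x-q_1), \ldots, 1/(x-q_n)$ can be forced into a single bounded neighborhood by any uniform scaling. This directly refutes the local sentence defining $V^n$-topology for $\tau^*$, contradicting the starting assumption.

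The hard part will be making the witness construction uniform: concretely, checking that the density assumptions of Definition~8.18 in \cite{prdf4} produce, for each prescribed tuple $(q_1,\ldots,q_n, c)$ of parameters, an $x$ whose valuation of $\partial((x-q_i)/c)$ outruns any fixed bound. The key point is that translation by $q_i$ and scaling by $c$ shift $\val^*(x-q_i)$ in predictable ways, but the derivation term $\partial(x-q_i)/c = \partial x / c$ is essentially unaffected by the translation and only linearly affected by the scaling, while $\val^*(\partial x)$ can be made arbitrarily negative by saturation. This asymmetry between how the two valuation conditions in the definition of $Q^*$ interact with affine substitution is exactly the structural obstruction that prevents a DV-topology from locally mimicking a multivaluation topology.
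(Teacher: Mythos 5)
Your treatment of the $W_2$ half matches the paper: cite Lemma~8.23 of \cite{prdf4} to get that the diffeovaluation ring is a $W_2$-ring, then use that a non-trivial $W_n$-ring induces a $W_n$-topology. One step you elide: a DV-topology is, by Definition~8.18, only \emph{locally equivalent} to a diffeovaluation topology, so before speaking of ``the DV data'' at all you must invoke the fact that $W_2$-topologies and (non-)$V^n$-topologies are local classes and replace $(K,\tau)$ by a locally equivalent diffeovalued field; the paper does this explicitly, and the same reduction is what licenses your later step of ``lifting the DV data to $K^*$.''

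For the non-$V^n$ half your route is genuinely different from the paper's. The paper assumes $\tau$ is a $V^n$-topology, writes it as generated by independent V-topologies, passes to an elementary extension where these are induced by valuation rings $\Oo_1,\ldots,\Oo_m$, concludes that $R$ is co-embeddable with $\Oo_1\cap\cdots\cap\Oo_m$, and derives the contradiction by citing Lemma~8.29 of \cite{prdf4}, which states precisely that $R$ is not co-embeddable with a multivaluation ring. You instead propose to refute the defining local sentence of $V^n$-topologies directly, via witnesses $x$ with $\val(x)=0$ and $\val(\partial x)$ arbitrarily negative. Two remarks. First, the co-embeddability with the multivaluation ring $S$ that you set up through Lemma~\ref{v2:lem} is never used in your eventual contradiction; your real argument is the direct refutation, so that scaffolding is dead weight. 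Second---and this is the genuine gap---everything hinges on the claim that the density clause of Definition~8.18/8.16 of \cite{prdf4} supplies elements of valuation $0$ whose $\partial$-image has arbitrarily negative valuation, and that the Leibniz/quotient-rule dominance estimates then run uniformly in $q_1,\ldots,q_n$ and $c$ (note also that when $\val(x-q_i)$ is large positive it is the plain valuation condition, not the derivative condition, that excludes $1/(x-q_i)$ from the rescaled ring, so a case split is needed that your sketch does not make). You flag this extraction from the density axioms as ``the hard part,'' but it is not a loose end: it is exactly the content the paper outsources to Lemma~8.29, so as written you have replaced the paper's one citation by an unverified assertion about an external definition. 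If that step is carried out, your argument would yield a self-contained and somewhat more informative proof bypassing Lemma~8.29; as it stands, the decisive step is missing.
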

\begin{proof}
  By definition of ``DV-topology,'' $\tau$ is locally equivalent to a
  diffeovaluation topology in the sense of Definition~8.16,
  \cite{prdf4}.  The $W_2$-topologies and $V^n$-topologies are local
  classes, so we may assume $\tau$ is a diffeovaluation topology.  Let
  $Q$ and $R$ be
  \begin{align*}
    Q &= \{x \in K : \val(x) \ge 0 \text{ and }
    \val(\partial x) > 0\} \\
    R &= \{x \in K : \val(x) \ge 0 \text{ and }
    \val(\partial x) \ge 0\}
  \end{align*}
  as in \S8.4 of \cite{prdf4}.  Then $Q$ and $R$ are rings inducing
  $\tau$.  By Lemma~8.23 in \cite{prdf4}, $Q$ is a $W_2$-ring, and so
  $\tau$ is a $W_2$-topology.

  Suppose that $\tau$ is a $V^n$-topology for some $n$.  Then $\tau$
  is induced by independent V-topologies $\tau_1, \ldots, \tau_m$.
  After passing to an elementary extension of the original
  diffeovalued field, we may assume that each $\tau_i$ is induced by a
  valuation ring $\Oo_i$ (not necessarily definable from the
  diffeovalued field structure).  The fact that the $\tau_i$ generate
  $\tau$ implies that $R$ is co-embeddable with $\Oo_1 \cap \cdots
  \cap \Oo_m$.  This contradicts Lemma~8.29 in \cite{prdf4}.
\end{proof}

\begin{theorem}
  If $\tau$ is a $W_2$-topology on a field of characteristic 0, then
  exactly one of the following holds:
  \begin{enumerate}
  \item \label{cas1} $\tau$ is a V-topology.
  \item \label{cas2} $\tau$ is generated by two independent V-topologies.
  \item \label{cas3} $\tau$ is a DV-topology in the sense of \cite{prdf4},
    Definition~8.18.
  \end{enumerate}
  Moreover, all such topologies are $W_2$-topologies.
\end{theorem}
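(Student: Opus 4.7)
The plan is to prove the theorem in three stages: show each of (\ref{cas1})--(\ref{cas3}) gives a $W_2$-topology; show the three cases are pairwise disjoint; and show every $W_2$-topology falls into one of them. The first two stages are quick consequences of tools already built in the paper. A V-topology is a $W_1$-topology by Corollary~\ref{basic:cor}(4), hence a $W_2$-topology by Corollary~\ref{basic:cor}(3); the topology generated by two independent V-topologies is a $V^2$-topology by Theorem~\ref{vsum:thm}, hence a $W_2$-topology by Corollary~\ref{vbasic:cor}(2); and a DV-topology is a $W_2$-topology by Lemma~\ref{dv:lem}. For mutual exclusivity, the proposition in \S\ref{vn:sec} shows that the topology generated by $n$ distinct (hence independent) V-topologies has weight exactly $n$, so (\ref{cas2}) cannot be a V-topology. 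Lemma~\ref{dv:lem} says a DV-topology is not a $V^n$-topology for any $n$, so (\ref{cas3}) is disjoint from (\ref{cas1}) and (\ref{cas2}).

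The main work is exhaustiveness: every $W_2$-topology $\tau$ on $K$ must be of type (\ref{cas1}), (\ref{cas2}), or (\ref{cas3}). Each of these three classes is a \emph{local class}: the first two by Theorem~\ref{vsum:thm}, and DV-topologies by their definition in \cite{prdf4} (DV-topology means locally equivalent to a diffeovaluation topology). So by Theorem~1.1 of \cite{PZ} I may pass to an $\omega$-complete elementary extension $(K^*,\tau^*)$, and by Lemma~\ref{extend:lem}, $\tau^*$ is induced by a $W_2$-ring $R$ on $K^*$. Apply Theorem~\ref{w2ring:thm} to $R$, which yields two cases.

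In the first case, $R$ is co-embeddable with some $Q = \{x \in K^* : \val(x) \ge 0 \text{ and } \val(\partial x) \ge 0\}$ from diffeovaluation data, so $\tau^* = \tau_R = \tau_Q$ is a diffeovaluation topology, hence a DV-topology. In the second case, there is a multivaluation ring $S$ and $c \in (K^*)^\times$ with $cS \subseteq R$. Then $R$ is a $\tau_S$-neighborhood of $0$ (it contains $cS$), and by translation invariance $\{c' R : c' \in (K^*)^\times\} \subseteq \tau_S$, so $\tau^* = \tau_R$ is coarser than $\tau_S$. By Lemma~\ref{v1:lem}, $\tau_S$ is a $V^m$-topology, and by Remark~\ref{kowalsky}, any coarsening of a $V^m$-topology is again a $V^m$-topology, so $\tau^*$ is a $V^m$-topology. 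The propositions in \S\ref{vn:sec} then show that $\tau^*$ is generated by some number $k$ of distinct (hence independent) V-topologies with $1 \le k \le m$, and that the resulting topology has weight exactly $k$. Since $\wt(\tau^*) \le 2$, we conclude $k \in \{1,2\}$, placing $\tau^*$ in case (\ref{cas1}) or (\ref{cas2}).

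Finally, transfer the conclusion from $(K^*,\tau^*)$ back to $(K,\tau)$ using that all three classes are local. The main obstacle I expect is not the case analysis itself, which follows cleanly from Theorem~\ref{w2ring:thm}, but the bookkeeping around local equivalence: one needs to verify that ``$\tau$ is generated by $n$ independent V-topologies'' really is cut out by the local sentence $\sigma_n$ of Theorem~\ref{vsum:thm}, and that ``$\tau$ is a DV-topology'' is preserved under local equivalence in both directions (which follows from the very definition in \cite{prdf4}, but deserves a careful citation).
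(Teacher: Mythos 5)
Your proposal is correct and follows essentially the same route as the paper: it establishes membership and mutual exclusivity via Theorem~\ref{vsum:thm} and Lemma~\ref{dv:lem}, then proves exhaustiveness by passing to a locally equivalent field where $\tau$ is induced by a $W_2$-ring and invoking Theorem~\ref{w2ring:thm}, with Lemma~\ref{v1:lem} and Remark~\ref{kowalsky} handling the multivaluation case. The only difference is presentational (you spell out the $\omega$-completeness and transfer bookkeeping that the paper compresses into ``we may pass to a locally equivalent field''), which is harmless.
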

\begin{proof}
  Theorem~\ref{vsum:thm} and Lemma~\ref{dv:lem} show that the three
  cases are all $W_2$-topologies, and the three cases are mutually
  exclusive.  It remains to show that the three cases are exhaustive.

  Each of the three cases is closed under local equivalence.  For
  cases (\ref{cas1}-\ref{cas2}) this is by Theorem~\ref{vsum:thm}; for
  case (\ref{cas3}) this is by fiat (in Definition~8.18 of
  \cite{prdf4}).  So we may pass to a locally equivalent field.
  Therefore we may assume that $\tau$ is induced by a $W_2$-ring $R$.
  By Theorem~\ref{w2ring:thm}, one of two things happens:
  \begin{itemize}
  \item $R$ is co-embeddable with some ring of the form
    \begin{equation*}
      R' = \{x \in K : \val(x) \ge 0 \text{ and } \val(\partial x) \ge 0\}
    \end{equation*}
    induced by some dense diffeovaluation data as in \S 8 of
    \cite{prdf4}.  This ring defines the diffeovaluation topology, so
    $\tau$ is a DV-topology.
  \item There is a multi-valuation ring $S$ such that $aS \subseteq
    R$.  Then $\tau = \tau_R$ is a coarsening of $\tau_S$.  The
    topology $\tau_S$ is a $V^n$-topology (Lemma~\ref{v1:lem}).  By
    Remark~\ref{kowalsky}, $\tau$ is a $V^n$-topology.  By
    Theorem~\ref{vsum:thm} and the fact that $\tau$ is a
    $W_2$-topology, it follows that $\tau$ is generated by one or two
    independent V-topologies. \qedhere
  \end{itemize}
\end{proof}
Because \emph{non}-$V^2$-topologies are a local class, we get an
interesting corollary:
\begin{corollary}
  DV-topologies (on fields of characteristic 0) are a local class.
\end{corollary}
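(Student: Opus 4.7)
The plan is to observe that, by the preceding theorem, on a field of characteristic $0$ the DV-topologies are precisely the $W_2$-topologies that are \emph{not} $V^2$-topologies, and both conditions are known to be local.

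More concretely, I would argue as follows. First, the class of $W_2$-topologies is a local class by Remark~\ref{local-class:rem}. Second, the class of $V^2$-topologies is a local class by definition, and so its complement (the class of non-$V^2$-topologies) is also a local class, as noted immediately after Remark~\ref{v:rem}. Therefore the intersection ``$W_2$-topology and not a $V^2$-topology'' is cut out by local sentences.

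It remains to identify this intersection with the class of DV-topologies. A V-topology is a $V^1$-topology, hence a $V^2$-topology by Corollary~\ref{vbasic:cor}, so case~(\ref{cas1}) of the theorem lies inside the $V^2$-topologies. Case~(\ref{cas2}) consists exactly of $V^2$-topologies by Theorem~\ref{vsum:thm}. Finally, Lemma~\ref{dv:lem} says that a DV-topology is a $W_2$-topology but not a $V^n$-topology for any $n$, in particular not a $V^2$-topology. Combining these observations with the trichotomy in the theorem, the $W_2$-topologies that are not $V^2$-topologies are precisely the DV-topologies.

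There is no real obstacle: the substantive content has been done by the classification theorem and Lemma~\ref{dv:lem}. The only thing to be careful about is checking that both ``$W_2$'' and ``not $V^2$'' are genuinely local conditions in the Prestel--Ziegler sense, which has already been recorded in Remark~\ref{local-class:rem} and \S\ref{vn:sec}. So the proof is essentially a bookkeeping exercise that splices together the trichotomy theorem with the localness of the two defining conditions.
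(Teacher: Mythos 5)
Your proposal is correct and matches the paper's intended argument: the corollary is exactly the observation that, by the classification theorem together with Lemma~\ref{dv:lem} and Theorem~\ref{vsum:thm}, the DV-topologies in characteristic $0$ are precisely the $W_2$-topologies that are not $V^2$-topologies, and both of these are local classes (Remark~\ref{local-class:rem} and the remark following Remark~\ref{v:rem}). The only cosmetic quibble is that the fact that a topology generated by two independent V-topologies is a $V^2$-topology comes from the proposition preceding Theorem~\ref{vsum:thm} rather than from that theorem itself, but this does not affect the argument.
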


\begin{lemma}\label{dv2:lem}
  If $\tau$ is a DV-topology, then $\tau$ has exactly one
  V-topological coarsening.
\end{lemma}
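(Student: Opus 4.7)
The plan is to argue by contradiction, exploiting the classification of $W_2$-topologies in characteristic $0$ just established. By Lemma~\ref{dv:lem}, $\tau$ is a $W_2$-topology, so Theorem~\ref{defcourse} already guarantees at least one V-topological coarsening and at most two. It therefore suffices to rule out the existence of two distinct V-topological coarsenings.

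Suppose for contradiction that $\sigma_1 \ne \sigma_2$ are two such coarsenings. Let $\sigma$ be the topology generated by $\sigma_1$ and $\sigma_2$ (as in Corollary~4.3 of \cite{PZ}). Since $\sigma_1$ and $\sigma_2$ are both coarser than $\tau$, so is $\sigma$. By the proposition on $V^n$-topologies in \S\ref{vn:sec} (together with Theorem~\ref{vsum:thm}), $\sigma$ is a $V^2$-topology and in particular not a $W_1$-topology.

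Now apply Proposition~\ref{bump:prop}: since $\tau$ is a $W_2$-topology, any strict coarsening must be a $W_1$-topology. If $\sigma$ were strictly coarser than $\tau$, this would contradict that $\sigma$ is not $W_1$. Hence $\sigma = \tau$, so $\tau$ itself is a $V^2$-topology. This contradicts Lemma~\ref{dv:lem}, which asserts that a DV-topology is not a $V^n$-topology for any $n$.

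The main obstacle is justifying that the generated topology $\sigma$ is genuinely a $V^2$-topology (and not, for instance, merely a V-topology), which requires $\sigma_1$ and $\sigma_2$ to be suitably independent. Following the argument in the proof of Theorem~\ref{defcourse}.\ref{p2}, after passing to an $\omega$-complete elementary extension each $\sigma_i$ is induced by a valuation ring $\Oo_i$, and distinct V-topologies on a field must arise from incomparable valuation rings; any comparable pair would, by Lemma~\ref{bump:lem} applied with $n = 1$, be forced to be co-embeddable and hence induce the same topology. Once this independence is in hand, the result from \S\ref{vn:sec} applies cleanly and the contradiction goes through.
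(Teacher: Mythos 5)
Your proof is correct and is essentially the paper's own argument: you generate the $V^2$-topology from the two putative coarsenings via Corollary~4.3 of \cite{PZ}, apply Theorem~\ref{vsum:thm} to see it is a $W_2$- but not a $W_1$-topology, invoke Proposition~\ref{bump:prop} to force it to equal $\tau$, and contradict Lemma~\ref{dv:lem}. The only caveat concerns your closing paragraph: the proposition in \S\ref{vn:sec} requires only that the two V-topologies be \emph{distinct} (distinct V-topologies are automatically independent, which is what underlies Corollary~4.3 of \cite{PZ}), and note that mere incomparability of the inducing valuation rings would not by itself give independence -- it is the distinctness of the topologies that does -- so your detour through Lemma~\ref{bump:lem} is unnecessary and, as stated, does not quite deliver the independence you claim, though the proof stands without it.
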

\begin{proof}
  Otherwise, it would have two coarsenings $\tau_1, \tau_2$, by
  Theorem~\ref{defcourse}.  Let $\tau_1 + \tau_2$ denote the
  $V^2$-topology generated by $\tau_1$ and $\tau_2$, as in
  Corollary~4.3 of \cite{PZ}.  Then $\tau_1 + \tau_2$ is coarser or
  equal to $\tau$.  By Theorem~\ref{vsum:thm}, $\tau_1 + \tau_2$ is a
  $W_2$-topology but not a $W_1$-topology.
  Proposition~\ref{bump:prop} then forces $\tau = \tau_1 + \tau_2$.
  This contradicts Lemma~\ref{dv:lem}.
\end{proof}
Lemma~\ref{dv2:lem} can be used to prove that unstable fields of
dp-rank 2 admit unique definable V-topologies (Proposition~6.2 in \cite{prdf4}).

\begin{acknowledgment}
The author would like to thank 
\begin{itemize}
\item Meng Chen, for hosting the author at Fudan University, where
  this research was carried out.
\item Yatir Halevi, whose questions prompted the current paper.
\end{itemize}
{\tiny This material is based upon work supported by the National Science
Foundation under Award No. DMS-1803120.  Any opinions, findings, and
conclusions or recommendations expressed in this material are those of
the author and do not necessarily reflect the views of the National
Science Foundation.}
\end{acknowledgment}

\bibliographystyle{plain} \bibliography{mybib}{}

\end{document}